\theoremstyle{plain}
\newtheorem{thm}{Theorem}[section] 
\newtheorem{cor}[thm]{Corollary}
\newtheorem{prop}[thm]{Proposition}
\newtheorem{lem}[thm]{Lemma}
\theoremstyle{definition}
\newtheorem{defn}[thm]{Definition}
\theoremstyle{remark}
\newtheorem{rem}[thm]{Remark}
\numberwithin{equation}{section}
\newcommand{\rank}{\operatorname{rank}}
\newcommand{\tr}{\operatorname{trace}}
\newcommand{\Sep}{\operatorname{Sep}}
\def\<{\left<}
\def\>{\right>}
\def\cstar{$C^*$-algebra}
\begin{document}
\title{The probability of entanglement}
\author{William Arveson}
%
%
%
%
\subjclass[2000]{Primary 46N50; Secondary 81P68, 94B27}
\date{26 December, 2007}

\begin{abstract} 
We show that states on tensor products of matrix algebras 
whose ranks are relatively small are {\em almost surely} entangled, but 
that states of maximum rank are not.    
More precisely, let 
$M=M_m(\mathbb C)$ and $N=M_n(\mathbb C)$ be full matrix algebras  with $m\geq n$, 
fix an arbitrary state $\omega$ of $N$, and let $E(\omega)$ be the set of all states of 
$M\otimes N$ that extend $\omega$.  The space $E(\omega)$ contains states of rank $r$ for every 
$r=1,2,\dots,m\cdot\rank\omega$, and it has a filtration into compact subspaces 
$$
E^1(\omega)\subseteq E^2(\omega)\subseteq \cdots\subseteq E^{m\cdot\rank\omega}=E(\omega), 
$$  
where $E^r(\omega)$ is the set of all states of $E(\omega)$ having rank $\leq r$.  

We show first that for every $r$, there is a real-analytic manifold $V^r$,  
homogeneous 
under a transitive action of a compact group $G^r$,  
which parameterizes $E^r(\omega)$.  The unique $G^r$-invariant probability measure 
on $V^r$ promotes to a probability measure $P^{r,\omega}$ on $E^r(\omega)$, 
and $P^{r,\omega}$ assigns 
probability $1$ to states of rank $r$.  The resulting 
probability space $(E^r(\omega),P^{r,\omega})$ represents 
``choosing a rank $r$ extension of $\omega$ at random".  

Main result:  For every $r=1,2,\dots,[\rank\,\omega/2]$, 
states of $(E^r(\omega),P^{r,\omega})$ are almost surely entangled.   
\end{abstract}
\maketitle

\section{Introduction}\label{S:in}

In the literature of physics  and  quantum information theory, 
a state $\rho$ of the tensor product of two matrix algebras 
$M\otimes N$ is said to be {\em separable} (or {\em classically correlated}) if 
it is a convex combination of product states 
$$
\rho= t_1\cdot \sigma_1\otimes\tau_1+t_2 \cdot\sigma_2\otimes\tau_2+\cdots+t_r\cdot\sigma_r\otimes \tau_r, 
$$
where the coefficients $t_k$ are nonnegative and sum to $1$, and where $\sigma_k,\tau_k$ are 
states of $M$ and $N$ respectively \cite{wern1}.      
Remark \ref{enRem1} below implies that the set of separable states is a compact convex 
subset of the state space of $M\otimes N$.  A state that is not separable is said 
to be {\em entangled}.  The so-called {\em separability problem} of determining 
whether a given state of $M\otimes N$ 
is entangled is a subject of current research \cite{HorSurvey}.   
It is considered difficult, and computationally, has been shown to be NP-hard.  The 
purpose of this paper is to show that 
{\em almost surely, a state of $M\otimes N$ of relatively small rank is entangled}.   

The set $E(\omega)$ of all extensions of a fixed state $\omega$ of $N$ to a state of 
$M\otimes N$  is a compact convex subspace of the state space of $M\otimes N$, and it admits a filtration into 
compact subspaces  
$$
E^1(\omega)\subseteq E^2(\omega)\subseteq \cdots\subseteq E^{m\cdot\rank\omega}(\omega)=E(\omega)
$$
where $E^r(\omega)$ is the space of all extensions $\rho$ of $\omega$ satisfying  $\rank \rho\leq r$.  
In Sections \ref{S:ns} through \ref{S:ss} we show that for each $r$ there is a uniquely 
determined unbiased 
probability measure $P^{r,\omega}$ on $E^r(\omega)$, and that $P^{r,\omega}$ 
is concentrated on the set of states of rank $=r$.  Hence 
the probability space 
$(E^r(\omega),P^{r,\omega})$ represents ``choosing a rank $r$ 
extension of $\omega$ at random". 
The main result below is an assertion about the probability of 
entanglement in the various probability spaces $(E^r(\omega),P^{r,\omega})$, 
namely that the probability of entanglement 
is $1$ when $r$ is relatively small (see Theorem \ref{erThm1} and Remark \ref{erRem1}).  
We also point out in Theorem \ref{diThm1} that 
this behavior does not persist through large values of $r$, since 
for $r=m\cdot\rank\omega$, the probability $p$ of entanglement satisfies $0<p<1$.

\begin{rem}[Terminology and conventions]  Let $H$ be a finite dimensional Hilbert space.  
A state $\rho$ of 
$\mathcal B(H)$ has an associated density operator $A\in \mathcal B(H)$, defined by 
$\rho(X)=\tr(AX)$, $X\in \mathcal B(H)$.  
In the literature of quantum information theory, 
the operation of restricting $\rho$ to a subfactor $\mathcal N\subseteq \mathcal B(H)$ corresponds to a 
``partial tracing" operation on its density operator, in which $A\in \mathcal B(H)$ is mapped to 
the operator $\bar A\in \mathcal N$ that is defined uniquely by
\begin{equation}\label{exEq}
\rho(Y)=\tr_\mathcal N(\bar AY), \qquad Y\in \mathcal N, 
\end{equation}
where $\tr_\mathcal N$ denotes the trace of $\mathcal N$ {\em normalized so that it 
takes the value $1$ on minimal 
projections of $\mathcal N$}.  In more operator-algebraic terms, 
the partial trace of $A$ is $\bar A=\mu\cdot E(A)$, where $E:\mathcal B(H)\to \mathcal N$ is the conditional 
expectation defined by the trace of $\mathcal B(H)$ (with any normalization) and $\mu$ is the multiplicity 
of the representation of $\mathcal N$ associated with the inclusion $\mathcal N\subseteq \mathcal B(H)$.  
The constant $\mu$ is forced on the formula $\bar A=\mu\cdot E(A)$ by the normalization 
specified for  
$\tr_\mathcal N$ in (\ref{exEq}), and this non-invariant feature of (\ref{exEq}) 
leads to a problem if one attempts 
to interpret it for more general $*$-subalgebras $\mathcal N\subseteq \mathcal B(H)$.   
More significantly, the right side of 
(\ref{exEq}) loses all meaning for type $III$ subfactors $\mathcal N\subseteq \mathcal B(H)$ when $H$ is 
infinite dimensional - a situation of some importance for algebraic quantum field theory.  We 
choose to avoid such issues by dealing with restrictions and extensions of states rather 
than partial traces of operators and their inverse images.  
\end{rem} 

\begin{rem}[Literature and related results]  A significant part of the literature 
of physics and quantum information theory 
makes some connection with probabilistic aspects of entanglement.   
The following papers (and references therein) represent a sample.    
The papers \cite{szVol}, \cite{aubSz} concern Hilbert spaces 
$H_N=(\mathbb C^2)^{\otimes N}$ for large $N$, and sharp estimates 
are obtained for the smallness 
of the ratio of the volume of separable states to the volume of all states.  
In \cite{parthaMaxDim}, the 
maximal dimension of a linear subspace of $H_1\otimes\cdots\otimes H_N$ 
that contains no nonzero product vectors is calculated, 
and  in \cite{hlw} it is shown that random subspaces of 
$H\otimes K$ are likely to contain only 
near-maximally entangled vectors.  
 \cite{lockhart} discusses ``minimal" decompositions for separable states 
into convex combinations of pure product states 
(also see \cite{uhl}, \cite{sanEtAl}).  The 
survey \cite{pittRubin} also deserves mention.  
For early results on the existence of a separable ball 
in the state space see \cite{brEtAl}.   A probabilistic study of separable states 
is carried out in \cite{zyEtAl}, where lower and upper bounds are obtained 
for the probability of the set of separable states.  Those authors 
make use a rather different 
probability space, and there appears to be 
negligible overlap between \cite{zyEtAl} and this paper.  
Finally, the paper \cite{pgEtAl} concerning maximal violations of 
Bell's inequalities for tripartite systems certainly bears on 
issues of entanglement.   
\end{rem}

\begin{rem}[Convex hulls of sets in $\mathbb R^k$]\label{enRem1}
We recall some basic lore of convexity theory.  
A classical result of Carath\'eodory \cite{cara1}, \cite{cara2} asserts that 
every convex combination of points from a subset $E$  of 
$\mathbb R^k$ can be written as a convex combination of at 
most $k+1$ points of $E$.  It follows 
that the convex hull of a {\em compact} subset $E$  
of $\mathbb R^k$ is compact.  Since the set of all product states 
of $M\otimes N$ is compact, we conclude that {\em the 
set of separable states of $M\otimes N$ is compact as well as convex, and the set of entangled 
states is a relatively open subset of the state space of $M\otimes N$.}

One can do slightly better for states.  
Let  $H$ be an $n$ dimensional Hilbert space.  The self-adjoint operators 
in $\mathcal B(H)$ form a real vector space of dimension $n^2$, and 
the set of self-adjoint operators $A$ satisfying 
$\tr A=1$ is a hyperplane of dimension $n^2-1$.  
So Caratheodory's theorem implies 
that every state of $\mathcal B(H)$ 
that belongs to the convex hull of an arbitrary set $\mathcal P$ of states can be 
written as a convex combination of at most $n^2$ states of $\mathcal P$.   
\end{rem}

The proof of 
Theorem \ref{erThm1} depends on the properties of 
a numerical invariant of states of tensor products of matrix algebras - called the wedge invariant - 
that can detect entanglement.  In this section we give a precise definition of the wedge invariant, 
deferring proofs to later sections, and 
follow that with some general remarks on how the wedge invariant enters into the proof of 
Theorem \ref{erThm1}.    

Its definition requires 
that we work with operators rather than matrices, hence we 
shift attention to states $\rho$ defined on  concrete operator 
algebras $\mathcal B(K)\otimes\mathcal B(H)\cong\mathcal B(K\otimes H)$, where $H$ and $K$ are finite dimensional Hilbert 
spaces.    Fix a state $\rho$ of $\mathcal B(K\otimes H)$, let $r$ be the rank of its density operator,  
and choose vectors $\zeta_1,\dots,\zeta_r\in K\otimes H$ such that 
\begin{equation}\label{inEq0}
\rho(x)=\sum_{k=1}^r\langle x\zeta_k,\zeta_k\rangle, \qquad x\in \mathcal B(K\otimes H).  
\end{equation}
The vectors $\zeta_k$ need not be eigenvectors of the density operator of $\rho$, but 
necessarily they are linearly independent.  
Let $\omega$ be the state of $\mathcal B(H)$ defined by 
restriction
\begin{equation}\label{inEq1}
\omega(x)=\rho(\mathbf 1_K\otimes x), \qquad x\in \mathcal B(H).  
\end{equation}
The rank of $\omega$ depends on $\rho$, and can be any integer from $1$ to $n=\dim H$.  Fix a 
Hilbert space $K_0$ of dimension $\rank\omega$, such as 
$K_0=\mathbb C^{\,\rank\omega}$.  The basic GNS construction applied to $\omega$, together with the 
basic representation theory of matrix algebras,  leads to the existence of a unit 
vector $\xi\in K_0\otimes H$ that is cyclic for the algebra $\mathbf 1_{K_0}\otimes\mathcal B(H)$, 
and has the property
\begin{equation}\label{inEq2}
\omega(x)=\langle (\mathbf 1_{K_0}\otimes x)\xi,\xi\rangle, \qquad x\in\mathcal B(H).
\end{equation}
We have been asked by the referee to point out that 
this procedure of passing from $\omega$ to  the vector state defined 
by $\xi$ is known as {\em purification} in the physics literature.  

Fixing such a unit vector $\xi$, we define 
an $r$-tuple of operators $v_1,\dots,v_r$ as follows.  
Because of (\ref{inEq1}) and (\ref{inEq2}), 
one can show 
that for each $k=1,\dots,r$ there 
is a unique operator $v_k: K_0\to K$ such that 
$$
(v_k\otimes x)\xi=(\mathbf 1_K\otimes x)\zeta_k,  \qquad x\in\mathcal B(H)
$$
and one finds that $v_1, \dots, v_r\in\mathcal B(K_0,K)$ satisfies $v_1^*v_1+\cdots+v_r^*v_r=\mathbf 1_{K_0}$.  The 
$r$-tuple $(v_1,\dots,v_r)$ depends on the choice of  $\zeta_1, \dots,\zeta_r$ as well as the 
choice of $\xi\in K_0\otimes H$.  But  it is 
also a fact that if $\zeta_1^\prime, \dots, \zeta_r^\prime$ 
is another set of $r$ vectors that satisfies (\ref{inEq0}) and $\xi^\prime$ is another 
cyclic vector satisfying (\ref{inEq2}),  then the resulting $r$-tuple of operators 
$(v_1^\prime,\dots,v_r^\prime)$ is related to $(v_1,\dots,v_r)$ as follows 
\begin{equation}\label{inEq3}
v_i^\prime = \sum_{j=1}^r \lambda_{ij} v_jw, \qquad 1\leq i\leq r,   
\end{equation}
where $(\lambda_{ij})$ is a unitary $r\times r$ matrix of scalars 
and $w$ is a unitary operator in $\mathcal B(K_0)$ (see Section \ref{S:wi}).  

For every choice of integers $i_1,\dots,i_r$ with 
$1\leq i_1,\dots,i_r\leq r$ 
the tensor product of operators $v_{i_1}\otimes \cdots \otimes v_{i_r}$ belongs to $\mathcal B(K_0^{\otimes r},K^{\otimes r})$.  
Hence we can define an operator $v_1\wedge\cdots\wedge v_r\in \mathcal B(K_0^{\otimes r},K^{\otimes r})$ 
as the alternating average 
\begin{equation}\label{inEq4}
v_1\wedge\cdots\wedge v_r=\frac{1}{|G|}\sum_{\pi\in G} (-1)^\pi v_{\pi(1)}\otimes\cdots\otimes v_{\pi(r)}, 
\end{equation}
the sum extended over the group $G$ all permutations $\pi$ of $\{1,\dots,r\}$.  The permutation group 
$G$ acts naturally as unitary operators  on both $K_0^{\otimes r}$ and $K^{\otimes r}$, 
and we may form their symmetric and antisymmetric subspaces.  For example,  
in terms of the unitary representation $\pi\mapsto U_\pi$ of $G$ on $K^{\otimes r}$, 
\begin{align*}
K^{\otimes r}_+&=\{\zeta\in K^{\otimes r}: U_\pi\zeta=\zeta, \quad \pi\in G\}, \\
K^{\otimes r}_-&=\{\zeta\in K^{\otimes r}: U_\pi\zeta=(-1)^\pi\zeta, \quad \pi\in G\}.  
\end{align*}
The operator $v_1\wedge\cdots\wedge v_r$ maps the symmetric subspace 
of $K_0^{\otimes r}$ to the antisymmetric subspace of $K^{\otimes r}$, hence 
its restriction to $K^{\otimes r}_{0+}$ is an operator in $\mathcal B(K^{\otimes r}_{0+},K^{\otimes r}_-)$.  
This operator also 
depends on the choice of $\xi$,  $\eta_1,\dots,\eta_r$.  However, because of (\ref{inEq3}), 
the rank of $v_1\wedge\cdots\wedge v_r\restriction_{K^{\otimes r}_{0+}}$ is a well-defined nonnegative integer 
that we associate 
with the state $\rho$
$$
w(\rho)=\rank(v_1\wedge\cdots\wedge v_r\restriction_{K^{\otimes r}_{0+}}).  
$$

In a similar way, we may form the wedge product of the $r$-tuple of adjoints $v_k^*: K\to K_0$ 
to obtain an operator $v_1^*\wedge\cdots\wedge v_r^*\in\mathcal  B(K^{\otimes r}, K_0^{\otimes r})$, 
and restrict it to the symmetric subspace $K^{\otimes r}_+\subseteq K^{\otimes r}$ to obtain a second integer 
$w^*(\rho)=\rank(v_1^*\wedge\cdots\wedge v_r^*\restriction_{K^{\otimes r}_+})$.  
Thus we can 
make the following 
\begin{defn}\label{wsDef1}
The {\em wedge invariant} of a state $\rho$ of $\mathcal B(K\otimes H)$ is defined as the pair 
of nonnegative integers $(w(\rho), w^*(\rho))$, where 
$$
w(\rho)=\rank(v_1\wedge\cdots\wedge v_r\restriction_{K^{\otimes r}_{0+}}), 
\quad w^*(\rho)=\rank(v_1^*\wedge\cdots\wedge v_r^*\restriction_{K^{\otimes r}_+}).  
$$
\end{defn}

The wedge invariant has two principal features.  First, it is 
capable of detecting entanglement because of the following result of Section \ref{S:wi}:   

\begin{thm}\label{wsThm1}
If $\rho$ is a separable state of $\mathcal B(K\otimes H)$, 
then $w(\rho)\leq 1$ and $w^*(\rho)\leq 1$.  
\end{thm}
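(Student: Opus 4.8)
The plan is to translate both inequalities into statements about the \emph{ranges} of the operators $v_1,\dots,v_r$ and of their adjoints $v_1^*,\dots,v_r^*$, and then to show that separability forces each of those families of ranges to span a subspace of dimension at most $r$. Write $\mathcal H_0\subseteq H$ for the support of $\omega$, so $\dim\mathcal H_0=\rank\omega=\dim K_0$. Taking $x=\mathbf 1_H$ in the defining relation $(v_k\otimes x)\xi=(\mathbf 1_K\otimes x)\zeta_k$ gives $(v_k\otimes\mathbf 1_H)\xi=\zeta_k$, so the linear map $\Theta:v\mapsto(v\otimes\mathbf 1_H)\xi$ on $\mathcal B(K_0,K)$ carries the basis $v_1,\dots,v_r$ of the subspace $\tilde V:=\spn\{v_1,\dots,v_r\}$ onto the basis $\zeta_1,\dots,\zeta_r$ of $\ran A$, where $A$ is the density operator of $\rho$. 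Writing $\xi=\sum_i e_i\otimes f_i$ with $\{e_i\}$ orthonormal in $K_0$, the vectors $f_i$ form a linearly independent basis of $\mathcal H_0$; from this one checks that $\Theta$ is a linear isomorphism of $\mathcal B(K_0,K)$ onto $K\otimes\mathcal H_0$ which carries rank-one operators to product vectors and back, with $\Theta(xg^*)=x\otimes\eta_g$ for a conjugate-linear bijection $g\mapsto\eta_g$ of $K_0$ onto $\mathcal H_0$. In particular $\ran v$ is a line in $K$ whenever $\Theta(v)$ is a product vector.

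Now suppose $\rho$ is separable. Refining a separable decomposition of $\rho$ to a convex combination of pure product states (Remark~\ref{enRem1}), we may write $A=\sum_{l=1}^N s_l\,(x_l\otimes y_l)(x_l\otimes y_l)^*$ with $s_l>0$ and with $x_l\in K$, $y_l\in\mathcal H_0$ unit vectors, so that $\ran A=\spn\{x_l\otimes y_l:1\le l\le N\}$ and $r=\rank A=\dim\ran A$. Applying $\Theta^{-1}$, the subspace $\tilde V$ is spanned by the rank-one operators $\Theta^{-1}(x_l\otimes y_l)$, whose ranges are the lines $\mathbb C x_l$; hence $\ran v_k\subseteq K_1:=\spn\{x_1,\dots,x_N\}$ for every $k$. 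The crucial estimate is $\dim K_1\le r$: if $x_{l_1},\dots,x_{l_p}$ is a maximal linearly independent subset of $\{x_l\}$, then the product vectors $x_{l_1}\otimes y_{l_1},\dots,x_{l_p}\otimes y_{l_p}$ are linearly independent in $K\otimes H$ --- apply the slice maps $\mathbf 1_K\otimes\langle\,\cdot\,,c\rangle$, $c\in H$, to any linear relation among them and use that each $y_l\ne 0$ --- so $\dim K_1=p\le\dim\ran A=r$. (Equivalently: the rank of a separable state dominates the rank of each of its marginals.)

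To conclude, note from formula~(\ref{inEq4}) that $v_1\wedge\cdots\wedge v_r$ is a linear combination of the operators $v_{\pi(1)}\otimes\cdots\otimes v_{\pi(r)}$, and each of these has range $\ran v_{\pi(1)}\otimes\cdots\otimes\ran v_{\pi(r)}\subseteq K_1^{\otimes r}$; since the range of $v_1\wedge\cdots\wedge v_r$ also lies in the antisymmetric subspace $K^{\otimes r}_-$, it lies in $K_1^{\otimes r}\cap K^{\otimes r}_-$, a space of dimension $\binom{\dim K_1}{r}\le 1$ because $\dim K_1\le r$. Hence $w(\rho)\le\rank(v_1\wedge\cdots\wedge v_r)\le 1$. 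The inequality $w^*(\rho)\le 1$ follows symmetrically: the operators $v_1^*,\dots,v_r^*$ span the space of adjoints of elements of $\tilde V$, which by the first paragraph is spanned by the rank-one operators $g_l x_l^*$ (where $\eta_{g_l}=y_l$), with ranges $\mathbb C g_l$; so $\ran v_k^*\subseteq\spn\{g_1,\dots,g_N\}$, and since $g\mapsto\eta_g$ is a conjugate-linear bijection onto $\mathcal H_0$ we get $\dim\spn\{g_l\}=\dim\spn\{y_l\}\le r$ by the slice argument applied on the $H$-side. The same exterior-power computation then yields $w^*(\rho)\le 1$.

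The step I expect to require the most care is the first one: establishing that $\Theta$ is an isomorphism onto $K\otimes\mathcal H_0$, that it intertwines rank-one operators with product vectors exactly as stated, and --- crucially --- that $\ran A$ lies inside $K\otimes\mathcal H_0$, so that $\Theta^{-1}$ may legitimately be applied to a product-vector spanning set of $\ran A$. Once this dictionary is in hand the remaining steps are short, and the degenerate cases $r>\dim K$ (where $K^{\otimes r}_-=0$, so $w(\rho)=0$) and $r>\dim K_0$ (likewise for $w^*$) are absorbed without comment into the dimension counts above.
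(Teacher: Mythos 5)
Your argument is correct and is essentially the proof the paper gives (Proposition \ref{wiProp1} via Corollary \ref{oeCor1}): separability plus the correspondence $v\mapsto(v\otimes\mathbf 1_H)\xi$ yields a rank-one spanning set for $\spn\{v_1,\dots,v_r\}$, after which the wedge product restricted to the symmetric subspace is seen to have rank at most one --- the paper does this by evaluating $w_1\wedge\cdots\wedge w_r$ on the vectors $\zeta^{\otimes r}$ and polarizing, while you count dimensions in $\wedge^r K_1$ with $\dim K_1\le r$, which amounts to the same thing. One sentence needs repair: the \emph{unrestricted} operator $v_1\wedge\cdots\wedge v_r$ does not map into $K^{\otimes r}_-$ and its rank can exceed $1$, so the closing chain $w(\rho)\le\rank(v_1\wedge\cdots\wedge v_r)\le 1$ is not justified as written; but your displayed reasoning correctly bounds the range of the restriction to $K_{0+}^{\otimes r}$ by $K_1^{\otimes r}\cap K^{\otimes r}_-=\wedge^r K_1$, a space of dimension at most $1$, which is all that is needed.
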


This separability criterion differs fundamentally from others that involve 
positive linear maps (see \cite{peresSep} and \cite{storSep}).  

The second feature of the wedge invariant is that 
it is associated with subvarieties of the real algebraic varieties that 
will be used to parameterize states in the following sections.  
To illustrate that geometric feature in broad terms, let $Y$ and $Z$ be finite-dimensional 
complex vector spaces, let $\mathcal B(Y,Z)$ be the space of all linear operators 
from $Y$ to $Z$, and consider the set  $\mathcal B(Y,Z)^r$ of all $r$-tuples 
$v=(v_1,\dots,v_r)$ 
with components $v_k\in\mathcal B(Y,Z)$.  Then for every $k=1,2,\dots$, the set 
of $r$-tuples 
$$
W^r(k)=\{v=(v_1,\dots,v_r)\in \mathcal B(Y,Z)^r: \rank(v_1\wedge\cdots\wedge v_r\restriction_{Y_+^{\otimes r}})\leq k\} 
$$
is an algebraic set - namely the set of common zeros of a finite set $f_1,\dots, f_p$ of 
real-homogeneous multivariate polynomials $f_k:\mathcal B(Y,Z)^r\to \mathbb R$.  
This leads to the following fact that provides 
a key step in the proof of Theorem \ref{erThm1} below: {\em Let $r=1,2,\dots$ and let 
$M$ be a $d$-dimensional connected real-analytic submanifold of $\mathcal B(Y,Z)^r$ 
that contains a point $(v_1,\dots,v_r)\in M$ for which 
\begin{equation}\label{wsEq2}
\rank(v_1\wedge\cdots\wedge v_r\restriction_{Y_+^{\otimes r}})>k
\end{equation}
for some $k\geq 1$.  Then (\ref{wsEq2}) is generic in the sense that for every relatively 
open subset $U\subseteq M$ 
endowed with real-analytic coordinates, $U\cap W^r(k)$ is a set of $d$-dimensional Lebesgue measure zero.  
}
\vskip0.1in

The methods we use are a mix of matrix/operator theory, 
convexity, and basic real algebraic geometry.  
In Section \ref{S:ec}, we offer some general remarks that address the broader issue of whether 
one can expect an effective ``real-analytic" characterization of entanglement in general.  
Finally, for the reader's convenience we have included two appendices containing formulations 
of some known results about real-analytic varieties of 
matrices that are fundamental for the analysis of Sections \ref{S:ns} through \ref{S:di}.  

We also point out that further applications to completely positive maps on matrix algebras 
are developed in a sequel to this paper \cite{arvEnt2}.

\section{The noncommutative spheres $V^r(n,m)$}\label{S:ns}

Let $m, n$ be positive integers with $m\geq n$.  For every $r=1,2,\dots$, we work with 
the space $V^r(n,m)$ of all $r$-tuples $v=(v_1,\dots,v_r)$ of complex 
$m\times n$ matrices $v_k$ such that 
\begin{equation}\label{nsEq1}
v_1^*v_1+\cdots+v_r^*v_r=\mathbf 1_n.
\end{equation}
There is a natural left action of the unitary group $U(rm)$ on $V^r(n,m)$, defined as follows.  
An element of $U(rm)$ can be viewed as a unitary 
$r\times r$ matrix $w=(w_{ij})$ with entries $w_{ij}$ in the matrix algebra $M_m(\mathbb C)$, 
and it acts on an element $v=(v_1,\dots,v_r)\in V^r(n,m)$ by way of $w\cdot v=v^\prime$, where 
\begin{equation}\label{nsEq1.2}
v_i^\prime=\sum_{j=1}^r w_{ij}v_j,\qquad 1\leq i\leq r.  
\end{equation}

There is also a right action of the 
unitary group $U(n)$ on $V^r(n,m)$, in which $u\in U(n)$ acts on $v\in V^r(n,m)$ by 
$(v_1,\dots,v_r)\cdot u=(v_1u,\dots,v_ru)$.  Both actions are 
better understood in terms of operators, 
after the identifications of the following paragraph have been made.  

\subsection{The varieties $V^r(H,K)$}

Note that $n$ precedes $m$ in the notation for $V^r(n,m)$.  This convention arises from the interpretation 
of $V^r(n,m)$ as a space of operators rather than matrices.  
If $H$ and $K$ are complex Hilbert spaces of respective dimensions $n$ and $m$, then the space 
$V^r(H,K)$ of all $r$-tuples of operators $v=(v_1,\dots,v_r)$ with components $v_k\in\mathcal B(H,K)$ 
that satisfy the counterpart of (\ref{nsEq1}), 
\begin{equation}\label{nsEq1.3}
v_1^*v_1+\cdots+v_r^*v_r=\mathbf 1_H,
\end{equation}
can 
be identified with $V^r(n,m)$ after making a choice of orthonormal bases for both $H$ and $K$, and all 
statements about $V^r(n,m)$ have appropriate counterparts in the more coordinate-free context 
of the spaces $V^r(H,K)$.  Throughout  this paper, it will serve our purposes better to 
interpret $V^r(n,m)$ as the space of $r$-tuples of operators $V^r(H,K)$.  

$V^r(H,K)$ is a compact subspace of the complex vector space $\mathcal B(H,K)^r$ of all $r$-tuples 
of operators $v=(v_1,\dots,v_r)$ with components in $\mathcal B(H,K)$, on which the unitary group $U(r\cdot K)$ 
of the direct sum $r\cdot K$ of $r$ copies of $K$ acts 
smoothly on the left.  Because of the presence of the $*$-operation in (\ref{nsEq1.3}), we 
can also view 
the ambient space $\mathcal B(H,K)^r$ as a finite dimensional {\em real} vector space, endowed    
with the (real) inner product 
\begin{equation}\label{nsEq2}
\langle (v_1,\dots,v_r),(w_1,\dots,w_r)\rangle =\Re \sum_{k=1}^r \tr w_k^*v_k, \qquad v,w\in \mathcal B(H,K)^r.  
\end{equation}
The following result summarizes the geometric structure that $V^r(H,K)$ inherits from 
its ambient space, when $H$ and $K$ are Hilbert spaces satisfying $n=\dim H\leq m=\dim K<\infty$.  

\begin{thm}\label{nsThm1}
For every $r=1,2,\dots$, the space $V^r(H,K)$ is a compact, connected, 
real-analytic Riemannian manifold of dimension $d=n(2rm-n)$, on which the unitary group
$\mathcal U(r\cdot K)$ acts as a transitive group of isometries.  In particular, the natural 
measure associated with its Riemannian metric is proportional to the unique 
probability measure on $V^r(H,K)$ that is invariant under the transitive $\mathcal U(r\cdot K)$-action.  
\end{thm}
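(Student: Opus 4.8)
The plan is to realize $V^r(H,K)$ explicitly as a homogeneous space $\mathcal U(r\cdot K)/\Gamma$ for a suitable closed subgroup $\Gamma$, and then read off all the asserted properties from that description. First I would fix a distinguished base point $v^0=(v_1^0,\dots,v_r^0)\in V^r(H,K)$: since $m\geq n$ we may choose an isometry $u\colon H\to K$ and set $v_1^0=\tfrac1{\sqrt r}\,u,\dots,v_r^0=\tfrac1{\sqrt r}\,u$, which clearly satisfies (\ref{nsEq1.3}). The next step is to prove transitivity of the left $\mathcal U(r\cdot K)$-action, i.e. that every $v\in V^r(H,K)$ is of the form $w\cdot v^0$ for some $w\in\mathcal U(r\cdot K)$. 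The key observation is that (\ref{nsEq1.3}) says exactly that the single operator $V\colon H\to r\cdot K$ with components $v_1,\dots,v_r$ is an isometry; and the action (\ref{nsEq1.2}) is precompose-nothing/postcompose-by-$w$ on $V$, so $w\cdot v^0=v$ means $wV^0=V$. Since $V^0$ and $V$ are both isometries of $H$ into the $rm$-dimensional space $r\cdot K$, they have the same finite-codimensional range type, so a unitary $w$ on $r\cdot K$ carrying $\operatorname{ran}V^0$ onto $\operatorname{ran}V$ compatibly exists (extend the partial isometry $V(V^0)^*$ to a unitary using $rm-n\geq 0$); this $w$ does the job. Hence the action is transitive, and in particular $V^r(H,K)$ is connected and compact as a continuous image of the connected compact group $\mathcal U(r\cdot K)$.

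Next I would identify the isotropy group $\Gamma$ of $v^0$ and show it is a closed (hence compact) Lie subgroup, so that $V^r(H,K)\cong\mathcal U(r\cdot K)/\Gamma$ acquires the structure of a compact real-analytic manifold on which $\mathcal U(r\cdot K)$ acts real-analytically and transitively. Concretely $w\in\Gamma$ iff $wV^0=V^0$, i.e. $w$ fixes $\operatorname{ran}V^0$ pointwise; writing $r\cdot K=\operatorname{ran}V^0\oplus(\operatorname{ran}V^0)^\perp$ this is the block-diagonal subgroup $\{\mathbf 1\}\oplus\mathcal U((\operatorname{ran}V^0)^\perp)\cong\mathcal U(rm-n)$, manifestly closed. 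The dimension count then follows from $\dim_{\mathbb R}\mathcal U(rm)-\dim_{\mathbb R}\mathcal U(rm-n)=(rm)^2-(rm-n)^2=n(2rm-n)=d$, matching the claim. For the Riemannian and isometry statements, I would equip $\mathcal B(H,K)^r$ with the real inner product (\ref{nsEq2}) and observe that the left $\mathcal U(r\cdot K)$-action on the ambient space is by linear maps preserving (\ref{nsEq2}) — this is immediate because $w$ unitary gives $\sum_k\operatorname{trace}((wv)_k^*(wv')_k)=\operatorname{trace}(V^*w^*wV')=\operatorname{trace}(V^*V')$, so the real inner product is a function of $V$ alone and is invariant. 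Restricting this flat metric to the submanifold $V^r(H,K)$ gives a Riemannian metric for which $\mathcal U(r\cdot K)$ acts by isometries, and since the action is transitive the metric is homogeneous; the associated Riemannian volume is therefore a $\mathcal U(r\cdot K)$-invariant measure, and by transitivity plus compactness of the group there is a unique invariant probability measure, so the Riemannian volume is a scalar multiple of it. The only remaining point is real-analyticity of the \emph{submanifold}: here I would invoke the standard fact (also the content of one of the appendices) that an orbit of a real-analytic action of a compact Lie group is a closed real-analytic submanifold, applied to the orbit $\mathcal U(r\cdot K)\cdot v^0=V^r(H,K)$.

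The main obstacle I anticipate is not transitivity or the dimension count, which are essentially linear algebra, but verifying cleanly that the orbit is genuinely a real-analytic \emph{embedded} submanifold of the ambient real vector space with the quotient-manifold structure agreeing with the subspace structure — in particular that $V^r(H,K)$ has no singularities and that (\ref{nsEq2}) restricts to a bona fide (non-degenerate) Riemannian metric on it. This is exactly where one needs the general principle about orbits of compact group actions rather than ad hoc computation; once that principle is in hand, the isotropy computation feeds directly into it. I would also want to double-check the degenerate edge cases ($r=1$, or $m=n$) to make sure the formula $d=n(2rm-n)$ and the transitivity argument survive — for $r=1$, $V^1(H,K)$ is the space of isometries $H\to K$, a classical Stiefel-type manifold, which is a reassuring consistency check.
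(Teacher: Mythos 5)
Your proposal is correct, and it reaches the same basic picture as the paper — both arguments hinge on identifying an $r$-tuple $v=(v_1,\dots,v_r)$ with the single isometry $\tilde v:H\to r\cdot K$, so that $V^r(H,K)$ becomes the Stiefel-type manifold of isometries of $H$ into $r\cdot K$ with $\mathcal U(r\cdot K)$ acting transitively by left multiplication, and both verify the isometric character of the action by the same trace computation with the unitarity relations. Where you genuinely diverge is in how the real-analytic manifold structure and the dimension are obtained. The paper delegates this to its Appendix \ref{S:a1}: the map $f(v)=v^*v$ has derivative of constant rank on the open set where $v^*v$ is invertible, so the level set $f^{-1}(\mathbf 1_H)$ is a real-analytic submanifold by the constant-rank theorem (Theorem \ref{a1Thm1}, from Dieudonn\'e), and the dimension is read off by counting the $n^2$ real equations in $v^*v=\mathbf 1_n$ against $\dim_{\mathbb R}\mathcal B(H,K)^r=2rmn$. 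You instead compute the isotropy group of a base point, identify it as $\{\mathbf 1\}\oplus\mathcal U(rm-n)$, and get both the manifold structure (via the orbit theorem for compact Lie group actions) and the dimension $(rm)^2-(rm-n)^2=n(2rm-n)$ from the quotient $\mathcal U(rm)/\mathcal U(rm-n)$. Your route buys a cleaner conceptual dimension count and makes the homogeneous-space structure explicit, but it leans on the (standard, though not proved in the paper's appendices) fact that the orbit of a real-analytic compact-group action is an embedded real-analytic submanifold whose quotient and subspace structures agree; the paper's route is more self-contained, since the constant-rank theorem it cites does exactly that work and is stated and sourced in Appendix \ref{S:a1}. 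Your worry about non-degeneracy of the induced metric is unnecessary: the form (\ref{nsEq2}) is a genuine positive-definite inner product on the real vector space $\mathcal B(H,K)^r$, so its restriction to any tangent subspace is automatically positive definite.
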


\begin{proof}
We 
identify the space $\mathcal B(H,K)^r$ of $r$-tuples of operators 
as the space  $\mathcal B(H,r\cdot K)$ of all 
operators from $H$ into the direct sum $r\cdot K$ of $r$ copies of $K$, in which an 
$r$-tuple $v=(v_1,\dots,v_r)$ of operators in $\mathcal B(H,K)$ 
is identified with the single operator $\tilde v:H\to r\cdot K$ defined by 
$$
\tilde v\xi=(v_1\xi,\dots,v_r\xi), \qquad \xi\in H.  
$$
After this identification, $V^r(H,K)$ becomes the space of all isometries 
in $\mathcal B(H,r\cdot K)$, and Theorem \ref{a1Thm2} implies that $V^r(H,K)$ inherits the 
structure of a connected real-analytic submanifold of the ambient real vector space 
$\mathcal B(H,r\cdot K)\cong\mathcal B(H,K)^r$ in which it is embedded, and that the 
unitary group $\mathcal U(r\cdot K)$ acts transitively on it by left multiplication.

The 
inner product (\ref{nsEq2}) on $\mathcal B(H,K)^r$ restricts so as to give 
 a Riemannian metric on the tangent 
bundle of $V^r(H,K)$, thereby making it into a compact Riemannian manifold.  

Notice that the action 
of $\mathcal U(r\cdot K)$ is actually defined on the larger inner product 
space $\mathcal B(H,K)^r$, and 
its action on $\mathcal B(H,K)^r$  is by isometries.  Indeed, let 
$u\in U(r\cdot K)$,  and view $u$ as an $r\times r$ matrix $(u_{ij})$ of operators $u_{ij}$ 
in $\mathcal B(K)$.  
Choosing $v,w\in \mathcal B(H,K)^r$ and setting $v^\prime=u\cdot v$ and 
$w^\prime=u\cdot w$ as in (\ref{nsEq1.2}), then $\sum_k u_{ki}^*u_{kj}=\delta_{ij}\mathbf 1_K$ 
because $u=(u_{ij})$ is unitary, hence 
\begin{align*}
\langle v^\prime,w^\prime\rangle &= \Re\sum_{k=1}^r\tr (w_k^{\prime *}v_k^\prime)=
\Re\sum_{i,j,k=1}^r\tr(w_i^*u_{ki}^*u_{kj}v_j)
\\
&=\Re \sum_{i=1}^r\tr(w_i^*v_i)=\langle v,w\rangle.  
\end{align*}
Hence $\mathcal U(r\cdot K)$ acts as 
isometries on the Riemannian submanifold $V^r(H,K)$.  

Finally, the dimension calculation amounts to little more than 
subtracting the number of real equations appearing 
in the matrix equation (\ref{nsEq1}) from the real dimension $\dim_\mathbb R(\mathcal B(H,K)^r)$ 
of the vector space  $\mathcal B(H,K)^r$.  
\end{proof}

\begin{rem}\label{upsRem1.5}[Right action of $\mathcal U(H)$ on $V^r(H,K)$]
The right action of the unitary group $\mathcal U(H)$ on $r$-tuples of operators in $\mathcal B(H,K)^r$ is defined by 
$$
(v,w)\in \mathcal B(H,K)^r\times \mathcal U(H)\mapsto v\cdot w=(v_1w,\dots,v_rw).  
$$ 
This action of $\mathcal U(H)$ commutes with the left action of $\mathcal U(r\cdot K)$ and 
it preserves the inner product of $\mathcal B(H,K)^r$.  Hence it restricts to a right 
action of $\mathcal U(H)$ on $V^r(H,K)$ that commutes with the transitive left action, and  
which also acts as isometries relative to the Riemannian structure of $V^r(H,K)$.  
\end{rem}

\begin{rem}\label{upsRem2}[The invariant measure class of $V^r(H,K)$]
Perhaps it is unnecessary to point out that the natural measure class of $V^r(H,K)$  is 
that of Lebesgue measure in local coordinates; more precisely, {\em relative to real-analytic local 
coordinates on an open subset  of $V^r(H,K)$, the measure 
$\mu$ associated with the Riemannian metric is mutually absolutely continuous with 
the transplant of Lebesgue measure to that chart.  }
\end{rem}

\subsection{Subvarieties of $V^r(H,K)$}

There is an intrinsic notion of real-analytic function $f: V^r(H,K)\to \mathbb R$, namely 
a function such that for every real-analytic isomorphism  $u:D\to U$ of an open ball $D\subseteq \mathbb R^d$ 
onto an open set $U\subseteq V^r(H,K)$, $f\circ u$ is a real-analytic function on $D$ 
(see Appendix \ref{S:a1}).  
Similarly, given a finite dimensional real vector space $W$, one can speak of 
real-analytic functions 
\begin{equation}\label{nsEq5}
F: V^r(H,K)\to W, 
\end{equation}
and though it is rarely necessary to do so, one 
can reduce the analysis of such vector functions 
to that of $k$-tuples of real-valued analytic functions by composing $F$ with a basis of 
linear functionals $\rho_1,\dots,\rho_k$ for the dual of $W$.

\begin{rem}[Homogeneous polynomials]\label{nsRem1}
Virtually all of the analytic functions (\ref{nsEq5}) that we will encounter 
are obtained by restricting homogeneous polynomials defined on 
the ambient space $\mathcal B(H,K)^r$ to $V^r(H,K)$.  
Let $V$ and $W$ be 
finite dimensional real vector spaces.  
A map $F: V\to W$ is  said to be a real homogeneous 
polynomial (of degree $k$) if it has the form $F(v)=G(v,v,\dots,v)$ where $G$ is a real multilinear 
mapping $G: V^k\to W$ in $k$ variables.  Though this terminology is 
slightly abusive in that the zero function qualifies as a homogeneous 
polynomial of every positive degree, it will not cause problems in this paper.      
A function $F:V\to W$ is a homogeneous polynomial of degree $k$ iff $\rho\circ F$ is a scalar-valued homogeneous polynomial 
of degree $k$ for every linear functional $\rho: W\to \mathbb R$.  
\end{rem}

\begin{defn}\label{nsDef1}
By a {\em subvariety} of $V^r(H,K)$ we mean a subspace $Z$ of $V^r(H,K)$ of the form 
$$
Z=\{v\in V^r(H,K): F(v)=0\}, 
$$
where $F: V^r(H,K)\to W$ is a real-analytic function taking values in some finite-dimensional 
real vector space $W$.  
\end{defn} 
Subvarieties are obviously compact.  As a concrete example, the set  
$$
Z=\{v=(v_1,\dots,v_r)\in V^r(H,K): \rank v_1\leq 2\}
$$
is the zero subvariety associated with the restriction to $V^r(H,K)$ of the cubic homogeneous 
polynomial $F: \mathcal B(H,K)^r\to \mathcal B(\wedge^3 H,\wedge^3 K)$, where 
$$
F(v)=(v_1\otimes v_1\otimes v_1)\restriction_{H\wedge H\wedge H}.  
$$

\begin{prop}\label{nsProp1}
Let $Z$ be a subvariety of $V^r(H,K)$ and let $\mu$ be the natural measure 
of $V^r(H,K)$.  If $Z\neq V^r(H,K)$, then $\mu(Z)=0$.  
\end{prop}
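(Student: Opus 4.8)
The plan is to reduce the statement to a standard fact about zero sets of real-analytic functions, namely that a real-analytic function on a connected real-analytic manifold either vanishes identically or vanishes on a set of measure zero. Since $V^r(H,K)$ is connected by Theorem \ref{nsThm1}, and since a subvariety $Z = \{v : F(v) = 0\}$ with $Z \ne V^r(H,K)$ means that $F$ does not vanish identically, the conclusion $\mu(Z) = 0$ should follow once this local-to-global principle is in place. First I would fix real-analytic local coordinates on a chart $u : D \to U \subseteq V^r(H,K)$ with $D$ an open ball in $\mathbb R^d$; by composing $F$ with a basis of linear functionals on $W$ (as noted in the discussion around \eqref{nsEq5}), it suffices to treat scalar-valued real-analytic functions $f = \rho \circ F \circ u$ on $D$. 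The relevant lemma is: if $f : D \to \mathbb R$ is real-analytic on a connected open set and $f \not\equiv 0$, then $\{f = 0\}$ has Lebesgue measure zero in $\mathbb R^d$; I expect this is one of the results quoted in the appendices, so I would simply cite it.

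The second step handles the passage from local to global. I would invoke connectedness of $V^r(H,K)$ to argue that if $F$ vanishes on some nonempty open subset of $V^r(H,K)$, then $F$ vanishes identically. This is the identity theorem for real-analytic functions on a connected manifold: the set where $F$ vanishes to infinite order is both open and closed, and real-analyticity forces it to coincide with the zero set; since it is nonempty by hypothesis and $V^r(H,K)$ is connected, $F \equiv 0$, contradicting $Z \ne V^r(H,K)$. Hence $Z$ contains no nonempty open set, and more quantitatively, in every coordinate chart $U$ the set $Z \cap U$ is the zero set of a nontrivial real-analytic function, hence has measure zero by the local lemma.

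Finally I would assemble these pieces. Cover $V^r(H,K)$ by countably many coordinate charts $U_1, U_2, \dots$ (possible since the manifold is compact, hence second countable). On each $U_i$, the function $F$ restricted to $U_i$ is either identically zero or not; if it were identically zero on some $U_i$, the identity theorem would give $F \equiv 0$, excluded. So on each $U_i$, $Z \cap U_i$ has $\mu$-measure zero, using Remark \ref{upsRem2} to identify $\mu$ locally with Lebesgue measure transplanted to the chart. Then $\mu(Z) \le \sum_i \mu(Z \cap U_i) = 0$.

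The main obstacle, such as it is, is purely bookkeeping: one must make sure the reduction to scalar real-analytic functions is legitimate (which Remark \ref{nsRem1} and the surrounding discussion already justify) and that the local measure-zero statement for real-analytic zero sets is available as a citable result — I would expect it to appear in Appendix \ref{S:a1} or \ref{S:a2}, stated for real-analytic varieties of matrices. There is no genuine analytic difficulty; the only subtlety worth a sentence is that "$Z \ne V^r(H,K)$" together with connectedness is exactly what rules out the degenerate case $F \equiv 0$, so the hypothesis is used in an essential way.
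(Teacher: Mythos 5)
Your proposal is correct and follows essentially the same route as the paper: reduce to a chart, compose with a linear functional on $W$ to get a scalar real-analytic function that is not identically zero (using connectedness to rule out vanishing on an open set), apply the appendix result (Proposition \ref{a2Prop1}) that its zero set has Lebesgue measure zero, and conclude via a cover of the compact manifold. The only difference is cosmetic: you spell out the identity-theorem argument and the countable cover, which the paper leaves implicit.
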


\begin{proof}  Let $F: V^r(H,K)\to W$ be a real-analytic function taking values in 
a finite dimensional real vector space such that 
$$
Z=\{v\in V^r(H,K): F(v)=0\}.  
$$  
$F$ cannot vanish identically because $Z\neq V^r(H,K)$; and since $V^r(H,K)$ is 
connected and $F$ is real-analytic, it cannot vanish identically 
on any nonempty open subset of $V^r(H,K)$.  

Let $d=\dim(V^r(H,K))$ and let 
$\mu$ be the natural measure of $V^r(H,K)$ associated with its Riemannian metric.  To show 
that $\mu(Z)=0$, it suffices to show that every point of $V^r(H,K)$ has a neighborhood 
$U$ such that $\mu(U\cap Z)=0$.  To prove that, fix a point $v\in V^r(H,K)$ and choose 
an open neighborhood $U$ of $v$ that can be coordinatized by the open unit ball $B\subseteq \mathbb R^d$ by 
way of a real-analytic isomorphism $u: B\to U$ (see Appendix \ref{S:a1}).  The composition $F\circ u: B\to W$ is 
a real-analytic mapping that does not 
vanish identically on $B$, hence there is a real-linear functional $\rho: W\to \mathbb R$ 
such that $\rho\circ F\circ u$ 
does not vanish identically on $B$.  Since $\rho\circ F\circ u$ is a real-valued analytic function
of its variables, 
Proposition \ref{a2Prop1} implies that the set $\tilde Z$ of its zeros has Lebesgue 
measure zero.  It follows that $u(\tilde Z)\subseteq U$ is a set of $\mu$-measure zero that contains $U\cap Z$, hence 
$\mu(U\cap Z)=0$.  
\end{proof}

\section{The unbiased probability spaces $(X^r,P^r)$}\label{S:ups}

Let $H$, $K$ be Hilbert spaces, with 
$n=\dim H\leq m=\dim K<\infty$.  
In section \ref{S:ss}, we will show that the spaces $V^r(H,K)$ can be used to parameterize states 
of $\mathcal B(K\otimes H)$.  The parameterizing map is not injective, but it promotes naturally 
to an injective map of a quotient $X^r$ of $V^r(H,K)$.  We now introduce these spaces $X^r$ 
and we show that each of them carries a unique unbiased probability measure $P^r$, 
so that $(X^r,P^r)$ becomes a topological probability 
space that serves to parameterize states faithfully.    In this section we summarize the basic properties of these 
probability spaces and discuss 
some of the random variables that will enter into the analysis of states later on.  

The group $U(r)$ of all scalar $r\times r$ unitary matrices in $M_r(\mathbb C)$ is 
identified with 
a subgroup of $\mathcal U(r\cdot K)$ - consisting unitary operator matrices with 
components in $\mathbb C\cdot \mathbf 1_K$, hence it 
acts naturally on $V^r(H,K)$,  
in which $\lambda=(\lambda_{ij})\in U(r)$ acts on $v=(v_1,\dots,v_r)\in V^r(H,K)$ 
by way of $\lambda\cdot v=v^\prime$ where 
\begin{equation}\label{upsEq1}
v_i^\prime=\sum_{j=1}^r\lambda_{ij} v_j, \qquad i=1,2, \dots, r.  
\end{equation}
Since $U(r)$ is compact and acts smoothly on $V^r(H,K)$, its orbit space is a 
compact metrizable space $X^r$.  Moreover, the natural projection 
$$
v\in V^r(H,K)\mapsto \dot v\in X^r
$$
is a continuous surjection with the following universal property that we will use 
repeatedly: {\em For every topological 
space $Y$ and every 
continuous function $f: V^r(H,K)\to Y$ satisfying $f(\lambda\cdot v)=f(v)$ for $\lambda\in U(r)$, 
$v\in V^r(H,K)$, there is a unique continuous function $\dot f: X^r\to Y$ 
such that $\dot f(\dot v)=f(v)$, $v\in V^r(H,K)$.  }  Note too that the commutative 
\cstar\ $C(X^r)$ is isomorphic to the $C^*$-subalgebra $A\subseteq C(V^r(H,K))$ of functions 
$f\in C(V^r(H,K))$ that satisfy $f(\lambda\cdot v)=f(v)$ for $\lambda\in U(r)$, 
$v\in V^r(H,K)$.

It follows that the quotient space 
$X^r$ carries a unique unbiased probability measure $P^r$ that is defined on 
Borel subsets $E$ by promoting the unique invariant probability measure $\mu$ of $V^r(H,K)$
$$
P^r(E)=\mu\{v\in V^r(H,K);\ \dot v\in E\}, \qquad E\subseteq X^r.  
$$ 
Equivalently, in terms of the identification $C(X^r)\cong A\subseteq C(V^r(H,K))$ 
of the previous paragraph, $P^r$ is the measure 
on the Gelfand spectrum $X^r$ of $A$ that the Riesz-Markov theorem associates with the state  
$$
\rho(f)=\int_{V^r(H,K)}f(v)\,d\mu(v), \qquad f\in A.  
$$
In this way we obtain a compact metrizable probability space $(X^r,P^r)$.  Notice that 
$(X^r,P^r)$ depends not only on $r$, but also $H$ and $K$ - or at least on their 
dimensions $n$ and $m$.  However, since $H$ and $K$ will be fixed throughout the 
discussions to follow, we can safely lighten notation by omitting reference 
to these extra parameters.  

\begin{rem}[Right action of $\mathcal U(H)$ on $X^r$]\label{upsRem1}
Note that while the left action of the 
larger group $\mathcal U(r\cdot K)$ acts transitively on $V^r(H,K)$, that symmetry 
is lost when one passes to the orbit space  $X^r$ because $U(r)$ is not a normal subgroup 
of $\mathcal U(r\cdot K)$.  On the other hand, the right action 
of $\mathcal U(H)$ on $V^r(H,K)$ does promote naturally to a right action on $X^r$.  Moreover, 
since the right action on $V^r(H,K)$ preserves the Riemannian metric, it also preserves 
the natural measure $\mu$ of $V^r(H,K)$.  
We conclude: {\em The right action of the unitary group $\mathcal U(H)$ on $X^r$ 
gives rise to a compact group of measure-preserving homeomorphisms of the topological 
probability space $(X^r,P^r)$.}
\end{rem}

\begin{rem}[The rank variable]
We begin by defining a random variable   
$$
\rank: X^r\to \{1,2,\dots,r\}.  
$$ 
For $v=(v_1,\dots,v_r)\in V^r(H,K)$, 
let $\mathcal S_v={\rm{span\,}}\{v_1,\dots,v_r\}$ be the complex linear subspace of $\mathcal B(H,K)$ 
spanned by its component operators.  Elementary 
linear algebra shows that $\mathcal S_{\lambda\cdot v}=\mathcal S_v$
for every $\lambda=(\lambda_{ij})\in U(r)$, and in particular  
the dimension of $\mathcal S_v$ depends only on the image 
$\dot v$ of $v$ in $X^r$.  Hence we can define a function $\rank: X^r\to \{1,2,\dots,r\}$ by 
\begin{equation}\label{upsEq2}
\rank(\dot v)=\dim\mathcal S_v, \qquad v\in V^r(H,K).  
\end{equation}
Since the function $v\mapsto \dim \mathcal S_v$ is lower semicontinuous in the sense that 
$\{v\in V^r(H,K): \dim \mathcal S_v\leq k\}$ is closed for every $k$, it follows that 
the rank function is Borel-measurable, and hence defines a random variable.  Moreover, since 
$\dim \mathcal S_{v\cdot w}=\dim \mathcal S_v$ for every $w\in \mathcal U(H)$, {\em the 
rank variable is invariant under the right action of $\mathcal U(H)$ on $X^r$.}   
\end{rem}

Significantly, rank is almost surely constant throughout $X^r$:   

\begin{thm}\label{upsThm1}
For every $r=1,2,\dots,mn$, $P^r\{x\in X^r: \rank(x)\neq r\}=0$.  
\end{thm}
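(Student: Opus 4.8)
The plan is to realize the ``bad'' set $\{x\in X^r:\rank(x)\neq r\}$ as the image under the quotient map $V^r(H,K)\to X^r$ of a subvariety of $V^r(H,K)$ that is proper, and then invoke Proposition~\ref{nsProp1}. Since $\rank(\dot v)=\dim\mathcal S_v\leq r$ always, the bad set is $\{x:\rank(x)\leq r-1\}$. By the lower semicontinuity already noted, and more importantly because $\dim\mathcal S_v\leq r-1$ is exactly the vanishing of all $r\times r$ minors of the matrix whose columns are the $v_k$ (equivalently the vanishing of the wedge-type homogeneous polynomial $v\mapsto v_1\wedge\cdots\wedge v_r$ in the appropriate sense, as in the concrete example following Definition~\ref{nsDef1}), this set pulls back to a genuine subvariety $Z=\{v\in V^r(H,K):\dim\mathcal S_v\leq r-1\}$ of $V^r(H,K)$. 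The $U(r)$-invariance of $\mathcal S_v$ guarantees $Z$ is $U(r)$-saturated, so $P^r\{x:\rank(x)\neq r\}=\mu(Z)$ by the definition of $P^r$.

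By Proposition~\ref{nsProp1}, it then suffices to exhibit a single point $v\in V^r(H,K)$ with $\dim\mathcal S_v=r$, i.e.\ with $v_1,\dots,v_r$ linearly independent in $\mathcal B(H,K)$; this forces $Z\neq V^r(H,K)$ and hence $\mu(Z)=0$. This is where the hypothesis $r\leq mn$ enters: the complex dimension of $\mathcal B(H,K)$ is $mn$, so there is room for $r$ linearly independent operators. The construction is elementary: pick $r$ linearly independent operators $a_1,\dots,a_r\in\mathcal B(H,K)$, form the positive operator $P=a_1^*a_1+\cdots+a_r^*a_r$ on $H$, check that $P$ is invertible (one may first normalize so that $P\geq\varepsilon\mathbf 1_H$, or simply add to one of the $a_k$ a small multiple of an operator making $P$ strictly positive without destroying independence), and set $v_k=a_kP^{-1/2}$. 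Then $\sum v_k^*v_k=P^{-1/2}PP^{-1/2}=\mathbf 1_H$, so $v=(v_1,\dots,v_r)\in V^r(H,K)$, and since $P^{-1/2}$ is invertible the $v_k$ remain linearly independent, so $\dim\mathcal S_v=r$.

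The only genuinely delicate point is the verification that $Z$ really is a subvariety in the sense of Definition~\ref{nsDef1} — that is, that ``$\dim\mathcal S_v\leq r-1$'' is cut out by the vanishing of a \emph{real-analytic} (indeed real-homogeneous-polynomial) map into a finite-dimensional real vector space. One clean way is to observe that $\dim\mathcal S_v\leq r-1$ iff the Gram-type $r\times r$ matrix $G(v)=(\langle v_j,v_i\rangle_{\mathrm{HS}})_{i,j}$, whose entries are $\tr(v_i^*v_j)$, is singular, i.e.\ iff $\det G(v)=0$; but $\det G(v)$ is a real-valued polynomial in the real and imaginary parts of the matrix entries of the $v_k$, hence a real-analytic (in fact polynomial) function on the ambient real vector space $\mathcal B(H,K)^r$, whose restriction to $V^r(H,K)$ realizes $Z$. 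With that in hand, everything else is bookkeeping, and the theorem follows. I expect the main obstacle in a fully rigorous write-up to be nothing conceptual but rather the care needed to confirm that $v\mapsto\det G(v)$ genuinely vanishes exactly on the rank-deficient locus (which uses only that the Hilbert–Schmidt inner product is nondegenerate) and that this is the right notion of subvariety to feed into Proposition~\ref{nsProp1}.
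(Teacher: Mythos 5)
Your proposal follows essentially the same route as the paper: realize the rank-deficient locus as the zero set of a homogeneous polynomial on $V^r(H,K)$ (the paper uses $v\mapsto v_1\wedge\cdots\wedge v_r$ valued in $\wedge^r\mathcal B(H,K)$; your Gram determinant $\det(\tr(v_i^*v_j))_{i,j}$ is an equally valid choice, since the Hilbert--Schmidt inner product is positive definite), note that this set is $U(r)$-saturated, exhibit a single point of $V^r(H,K)$ with linearly independent components, and invoke Proposition~\ref{nsProp1}. The one place where you are too casual is the instruction ``check that $P$ is invertible'': for an arbitrary linearly independent $r$-tuple $a_1,\dots,a_r$ the operator $P=a_1^*a_1+\cdots+a_r^*a_r$ need \emph{not} be invertible --- for instance the matrix units $e_{11},e_{21}\in M_2(\mathbb C)$ are linearly independent but both annihilate the second basis vector, so $P$ is singular. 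This is exactly the point where the paper's Lemma~\ref{upsLem0} does its real work, curing the common kernel $\ker a_1\cap\cdots\cap\ker a_r$ by adding a partial isometry supported there. Your parenthetical perturbation idea is salvageable: since $m\geq n$, the injective operators are dense and open in $\mathcal B(H,K)$ while linear independence is an open condition, so one may replace $a_1$ by a nearby injective operator, keeping independence and forcing $P\geq a_1^*a_1>0$; but the alternative of ``normalizing so that $P\geq\varepsilon\mathbf 1_H$'' is circular and should be dropped. With that single repair spelled out, your argument is complete and coincides in substance with the paper's.
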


The proof of Theorem \ref{upsThm1} requires:  

\begin{lem}\label{upsLem0}  
For every $r=1,2,\dots,mn$, $V^r(H,K)$ contains an $r$-tuple $v=(v_1,\dots,v_r)$ with 
linearly independent component operators $v_1,\dots,v_r$.  
\end{lem}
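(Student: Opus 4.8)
The plan is to write down such an $r$-tuple explicitly, the key reduction being the elementary combinatorial remark that an integer $r$ with $1\le r\le mn$ is either a sum of $n$ positive integers each at most $m$ (the regime $n\le r\le mn$) or the number of parts in a partition of $\{1,\dots,n\}$ (the regime $1\le r\le n$). Throughout I would fix orthonormal bases $e_1,\dots,e_n$ of $H$ and $f_1,\dots,f_m$ of $K$ and the associated matrix units $E_{ij}\in\mathcal B(H,K)$ (with $E_{ij}e_j=f_i$ and $E_{ij}e_l=0$ for $l\ne j$), and use repeatedly the identities $E_{ij}^*E_{i'j'}=\delta_{ii'}\,e_je_{j'}^*$ (so that $E_{ij}^*E_{ij}$ is the projection of $H$ onto $\mathbb C e_j$) together with the fact that distinct matrix units are linearly independent in $\mathcal B(H,K)$.

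In the regime $n\le r\le mn$, I would pick positive integers $n_1,\dots,n_n$ with each $n_j\le m$ and $\sum_j n_j=r$, then in the $j$-th ``column'' choose $n_j$ distinct row indices and take the $r$ operators $\tfrac1{\sqrt{n_j}}E_{i,j}$ over those choices. These are scalar multiples of $r$ distinct matrix units, hence linearly independent, and a one-line computation gives $\sum\tfrac1{n_j}E_{i,j}^*E_{i,j}=\sum_{j=1}^n e_je_j^*=\mathbf 1_H$, so the tuple lies in $V^r(H,K)$. In the regime $1\le r\le n$, I would use the hypothesis $m\ge n$ to fix an isometry $W:H\to K$, partition $\{1,\dots,n\}$ into $r$ nonempty blocks $B_1,\dots,B_r$, let $Q_k$ be the orthogonal projection of $H$ onto $\spn\{e_j:j\in B_k\}$, and set $v_k=WQ_k$; then $v_k^*v_k=Q_kW^*WQ_k=Q_k$ gives $\sum_k v_k^*v_k=\mathbf 1_H$, and linear independence of $v_1,\dots,v_r$ follows by applying $W^*$ and using that the $Q_k$ have pairwise orthogonal ranges. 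Since every $r$ with $1\le r\le mn$ lies in one of the two regimes, this finishes the proof.

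There is no real obstacle in this argument; it is a matter of selecting a convenient construction. The only points worth flagging are that the hypothesis $m\ge n$ is used essentially (to produce the isometry $W$, and to have enough rows available per column), and that the two regimes genuinely require different constructions: when $r<n$ one cannot use rank-one operators, since the contributions $v_k^*v_k$ would then be supported on fewer than $n$ of the rank-one projections $e_je_j^*$ and so could never sum to $\mathbf 1_H$.
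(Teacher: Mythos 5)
Your proof is correct, and it takes a genuinely different route from the paper's. The paper argues softly: it starts from an \emph{arbitrary} linearly independent set $b_1,\dots,b_r\in\mathcal B(H,K)$, repairs the joint kernel $H_0=\ker b_1\cap\cdots\cap\ker b_r$ by adding a partial isometry $B'$ with initial space $H_0$ and final space inside $(BH)^\perp$ in $r\cdot K$ (this is where $m\geq n$ enters there), and then normalizes the resulting tuple $a_1,\dots,a_r$ by the invertible operator $(a_1^*a_1+\cdots+a_r^*a_r)^{-1/2}$ to land in $V^r(H,K)$ while preserving linear independence. That argument is uniform in $r$ and shows, in effect, that a generic linearly independent $r$-tuple can be corrected and normalized into $V^r(H,K)$. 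Your argument is instead fully explicit: scaled matrix units distributed over the columns when $n\leq r\leq mn$, and $WQ_k$ for a partition of an orthonormal basis when $1\leq r\leq n$, with the two regimes covering all $r$ and meeting at $r=n$. What you gain is a concrete witness (a tuple of partial isometries up to scaling) and a completely elementary verification of both the sum condition and linear independence; what you pay is the case split, which the paper's normalization trick avoids. One tiny point: your closing parenthetical about why rank-one operators cannot work for $r<n$ is most cleanly justified by noting that $\sum_k v_k^*v_k$ then has rank at most $r<n$ and so cannot equal $\mathbf 1_H$; the phrasing about being ``supported on fewer than $n$ of the projections $e_je_j^*$'' implicitly assumes the rank-one operators are matrix units, which is not needed for the rank bound. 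This does not affect the proof itself.
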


\begin{proof}
Fixing $r$, $1\leq r\leq mn$, we claim first that there is a linearly 
independent set of operators $a_1,\dots,a_r:H\to K$ such that 
\begin{equation}\label{upsEq3}
\ker a_1\cap\cdots\cap\ker a_r=\{0\}.
\end{equation}
Indeed, since $\dim\mathcal B(H,K)=mn\geq r$, we can find a linearly independent 
subset $b_1,\dots,b_r\in\mathcal B(H,K)$.  Set $H_0=\ker b_1\cap\cdots\cap \ker b_r$ 
and let $r\cdot K$ be the direct sum of $r$ copies of $K$.  
The linear operator $B: \xi\in H\mapsto (b_1\xi,\dots,b_r\xi)\in r\cdot K$ has 
kernel $H_0$, hence $\dim BH+\dim H_0=n\leq m=\dim K\leq \dim(r\cdot K)$, and 
therefore $\dim H_0\leq \dim(r\cdot K)-\dim BH$.  
Hence there is a partial isometry $B^\prime$ in $\mathcal B(H,r\cdot K)$ with initial space 
$H_0$ and final space contained in $BH^\perp$.  Writing $B^\prime\xi=(b_1^\prime\xi,\dots,b_r^\prime\xi)$ 
with $b_k^\prime\in \mathcal B(H,K)$, we set 
$$
a_1=b_1+b_1^\prime,\ a_2=b_2+b_2^\prime, \dots ,\, a_r=b_r+b_r^\prime.  
$$
These operators restrict to a linearly independent set of operators 
from $H_0^\perp$ into $K$, hence they are linearly independent subset of $\mathcal B(H,K)$;  
and since the operator $B+B^\prime\in\mathcal B(H,r\cdot K)$ 
has trivial kernel, (\ref{upsEq3}) follows.

Fix such an $r$-tuple $a_1,\dots,a_r$.  Then  $a_1^*a_1+\cdots+a_r^*a_r$ is an invertible operator 
in $\mathcal B(H)$, and we can define a new $r$-tuple  $v_1,\dots,v_r$ in $\mathcal B(H,K)$ by 
$$
v_k=a_k(a_1^*a_1+\cdots+a_r^*a_r)^{-1/2}, \qquad k=1,\dots,r.  
$$
The operators $v_k$ are also linearly independent, and by its construction, the $r$-tuple 
$v=(v_1,\dots,v_r)$ belongs 
to $V^r(H,K)$.
\end{proof}

\begin{proof}[Proof of Theorem \ref{upsThm1}]
Consider the function $F: V^r(H,K)\to \wedge^r\mathcal B(H,K)$ 
obtained by restricting the homogeneous polynomial defined on $\mathcal B(H,K)^r$ 
$$
F(v)=v_1\wedge\cdots\wedge v_r,\qquad v=(v_1,\dots,v_r)\in \mathcal B(H,K)^r, 
$$ 
to the submanifold $V^r(H,K)$.   
Obviously, $F$ is real-analytic, and elementary 
multilinear algebra implies that for every $v=(v_1,\dots,v_r)\in V^r(H,K)$, 
$$
\{v_1,\dots,v_r\}\, {\rm{is\ linearly\ dependent}}\iff v_1\wedge\cdots\wedge v_r=0.   
$$ 
Hence $\dim \mathcal S_v<r\iff F(v)=0$.   
It follows from Lemma 
\ref{upsLem0} that the polynomial $F$ does not vanish identically on $V^r(H,K)$, 
so by Proposition \ref{nsProp1}, its zero variety 
$
Z=\{v\in V^r(H,K): F(v)=0\}
$
is a closed subset of $V^r(H,K)$ of 
$\mu$-measure zero.  Moreover, $Z$ is invariant under the left action of $U(r)$ on 
$V^r(K,K)$ because for $\lambda\in U(r)$, $v=(v_1,\dots,v_r)\in V^r(H,K)$ and 
$\lambda\cdot v=(v_1^\prime,\dots,v_r^\prime)$ as in (\ref{upsEq1}), we have 
$$
F(\lambda\cdot v)=v_1^\prime\wedge\cdots\wedge v_r^\prime=\det(\lambda_{ij})\cdot v_1\wedge\cdots\wedge v_r=
\det(\lambda_{ij})\cdot F(v).  
$$
It follows that $\dot Z$ is a closed set of probability zero in $X^r$, 
$$
P^r(\{x\in X^r: \rank(x)<r\})=P^r(\dot Z)=\mu(Z)=0, 
$$
and Theorem \ref{upsThm1} follows.    
\end{proof}

\section{Operators associated with extensions of states}\label{S:oe}  

Let $H_0$ be a finite dimensional Hilbert space and let $N\subseteq \mathcal B(H_0)$ be 
a subfactor - a $*$-subalgebra with trivial center that contains the identity operator.  
Every state $\omega$ of $N$ can be extended in many ways 
to a state of $\mathcal B(H_0)$.  In this section we show that the range of 
the density operator of every extension $\rho$ is linearly isomorphic to 
a certain operator space associated with the pair $(\rho,\omega)$.  
While this identification is technically straightforward, it seems not to be 
part of the lore of matrix algebras.  The details follow.    

For every state $\omega$ of $N$, 
the set $E(\omega)$ of all extensions of $\omega$ to a state of $\mathcal B(H_0)$
is a compact convex subset of the state space of $\mathcal B(H_0)$.  We begin 
with some elementary observations that relate 
properties of $\omega$ to properties of the various states in $E(\omega)$.  The 
{\em support projection} of a state $\rho$ of $\mathcal B(H_0)$ is 
defined as the smallest projection 
$p\in\mathcal B(H_0)$ such that $\rho(p)=1$; the range $pH_0$ of the support projection 
of $\rho$ is the same as the range of its density operator, and the dimension of 
that space is called the {\em rank} of $\rho$.

\begin{lem}\label{oeLem0}  Let $N\subseteq \mathcal B(H_0)$ be a subfactor, let 
$\omega$ be a state of $N$, and let $p$ be the smallest projection in $N$ satisfying $\omega(p)=1$.  
Then the range of the density operator of every 
state in $E(\omega)$ is contained in $pH_0$.  
\end{lem}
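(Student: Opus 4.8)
The statement is essentially a rank/support-projection monotonicity fact: if $\rho$ extends $\omega$ and $p$ is the support projection of $\omega$ in $N$, then the density operator of $\rho$ is supported under $p$. The natural route is to work with the complementary projection $q=\mathbf 1_{H_0}-p\in N$. By the definition of the support projection, $\omega(q)=1-\omega(p)=0$, and since $\rho$ extends $\omega$ we get $\rho(q)=\omega(q)=0$. The whole content of the lemma is then the standard fact that a state which annihilates a projection also annihilates everything ``below'' or ``beside'' that projection in the appropriate sense — precisely, that $qH_0$ is orthogonal to the range of the density operator of $\rho$.

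First I would make the elementary observation that for a positive operator $A$ (the density operator of $\rho$) and a projection $q$, the condition $\tr(Aq)=0$ forces $Aq=0$. This is immediate from positivity: write $\tr(Aq)=\tr(A^{1/2}qA^{1/2})$, an expression which is the trace of a positive operator, so it vanishes iff $A^{1/2}q=0$, i.e.\ $qA^{1/2}=0$, whence $qA=0$ and $Aq=0$. Consequently $\ran A\subseteq \ker q = pH_0$, which is exactly the assertion of the lemma, since the range of the density operator of $\rho$ coincides with $\rho H_0$ where $\rho$ denotes (by the conventions set above) its support projection.

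The only point needing a word of care is the relation $\rho(q)=\omega(q)$: here one uses that $q\in N$ and that $\rho$ restricted to $N$ equals $\omega$ by the definition of $E(\omega)$. Likewise one should note that $q$ genuinely lies in $N$ — this is where the fact that $N$ is a $*$-subalgebra containing the identity is used, so that $\mathbf 1_{H_0}-p\in N$. No use is made of the subfactor (trivial center) hypothesis; it is simply inherited from the ambient setup and will matter for later results, not this one.

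I do not anticipate a genuine obstacle; the lemma is a bookkeeping step. If anything, the only thing to be careful about is notational: the symbol $p$ in the statement is the support projection of $\omega$ computed inside $N$, and one must check it agrees with (or dominates) the relevant projection when viewed in $\mathcal B(H_0)$ — but since $q=\mathbf 1-p$ is killed by $\rho$ as an element of $\mathcal B(H_0)$, and the support projection of $\rho$ is the \emph{smallest} projection in $\mathcal B(H_0)$ on which $\rho$ takes value $1$, we get directly that this support projection is $\leq p$, which is what we want. So the proof is: pass to $q=\mathbf 1_{H_0}-p\in N$; observe $\rho(q)=\omega(q)=0$; deduce $Aq=0$ from positivity of the density operator $A$; conclude $\ran A\subseteq pH_0$.
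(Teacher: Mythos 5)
Your proposal is correct and follows essentially the same route as the paper: both arguments hinge on the single observation that $p\in N$ forces $\rho(p)=\omega(p)=1$, and then conclude that the support projection of $\rho$ lies under $p$. The only difference is that the paper invokes this last step directly from the minimality in the definition of the support projection, whereas you unfold it via the standard positivity argument $\tr(A(\mathbf 1-p))=0\Rightarrow A(\mathbf 1-p)=0$; this is a matter of how much detail to record, not of substance.
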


\begin{proof}
Choose $\rho\in E(\omega)$.  Since $p\in N$, we have $\rho(p)=\omega(p)=1$.  It follows 
that the support projection $q\in\mathcal B(H_0)$ of $\rho$ satisfies 
$q\leq p$.   
\end{proof}

\begin{rem}[Extensions of faithful states]\label{ssRem1}
It is significant that for purposes of analyzing the structure of $E(\omega)$, {\em 
one can restrict attention to extensions of faithful states  $\omega$.  }
Indeed, letting $p$ be as in Lemma \ref{oeLem0},  we see that since 
every state in $E(\omega)$ is supported in $pH_0$, it can be viewed as a 
state of $\mathcal B(pH_0)=p\mathcal B(H_0)p$ 
that extends the faithful state defined by restricting 
$\omega$ to the corner $pNp\subseteq N$.  Since $pNp$ is 
also a subfactor of $\mathcal B(pH_0)$, the asserted reduction is apparent.    
\end{rem}

\begin{rem}[Commutants and tensor products]\label{ssRem2}
Let $M=N^\prime$ be the commutant of $N$ in $\mathcal B(H_0)$.  $M$ is also a subfactor, and 
we can identify the \cstar\  $\mathcal B(H_0)$ with $M\otimes N$.  
Since we intend to discuss entanglement 
among the states of $E(\omega)$, it is better to view $E(\omega)$ 
as the set of states $\rho$ on the tensor product $M\otimes N$ that satisfy 
$$
\rho(b)=\omega(\mathbf 1_M\otimes b),\qquad b\in N.  
$$
Having made these identifications, we are free to 
introduce new ``coordinates" that realize $M$ as $\mathcal B(K)$, 
$N$ as $\mathcal B(H)$, and $M\otimes N$ as $\mathcal  B(K\otimes H)$.  
\end{rem}

\begin{rem}[Mixed states of $N$]  Since every extension of a pure state $\omega$ 
of $N$ to $M\otimes N$ is easily seen to be separable, the separability 
problem has content only for extensions to $M\otimes N$ of {\em mixed} states $\omega$.  
In view of Remark \ref{ssRem1}, {\em we should analyze 
extensions of faithful states of $N$ to $M\otimes N$ in cases 
where $N=\mathcal B(H)$ and 
$\dim H\geq 2$.}  

\end{rem}

We collect the following elementary fact -- a textbook exercise 
on the GNS construction and the representation theory of matrix algebras.  

\begin{lem}\label{oeLem1}
Let $H$ be a finite-dimensional Hilbert space and let $\omega$ be a state of $\mathcal B(H)$ 
of rank $r$.  Then there is a unit vector $\xi_\omega\in \mathbb C^r\otimes H$ such that 
$\omega(b)=\langle(\mathbf 1_{\mathbb C^r}\otimes b)\xi_\omega, \xi_\omega\rangle$, $b\in\mathcal B(H)$, and 
$\xi_\omega$ is a cyclic vector for the algebra $\mathbf 1_{\mathbb C^r}\otimes\mathcal B(H)$.  If $\xi_\omega^\prime$ 
is another vector in $\mathbb C^r\otimes H$ with the same property, then there is a 
unique unitary operator $w\in \mathcal B(\mathbb C^r)$ 
such that $\xi_\omega^\prime=(w\otimes\mathbf 1_H)\xi_\omega$.  
\end{lem}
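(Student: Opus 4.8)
The plan is to build $\xi_\omega$ from the GNS representation of $\omega$ and then match the GNS data against the concrete representation $\mathbf 1_{\mathbb C^r}\otimes\mathcal B(H)$ on $\mathbb C^r\otimes H$ using the uniqueness clause of the representation theory of matrix algebras. First I would run the GNS construction on $(\mathcal B(H),\omega)$: since $\omega$ has rank $r$, its density operator $A$ has $r$ nonzero eigenvalues $t_1,\dots,t_r>0$ with unit eigenvectors $e_1,\dots,e_r\in H$, and $\omega(b)=\sum_{k=1}^r t_k\langle be_k,e_k\rangle$. The GNS Hilbert space is $L^2(\mathcal B(H),\omega)$, the GNS representation is left multiplication, and the cyclic vector is the image of $\mathbf 1$. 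A direct computation identifies this representation with a multiplicity-$r$ copy of the identity representation of $\mathcal B(H)$ on $H$: concretely, $b\mapsto \mathbf 1_{\mathbb C^r}\otimes b$ on $\mathbb C^r\otimes H$, with cyclic vector $\xi_\omega=\sum_{k=1}^r \sqrt{t_k}\,(\varepsilon_k\otimes e_k)$ for an orthonormal basis $\varepsilon_1,\dots,\varepsilon_r$ of $\mathbb C^r$. One checks $\|\xi_\omega\|^2=\sum t_k=1$, that $\langle(\mathbf 1_{\mathbb C^r}\otimes b)\xi_\omega,\xi_\omega\rangle=\sum_k t_k\langle be_k,e_k\rangle=\omega(b)$, and that $\xi_\omega$ is cyclic for $\mathbf 1_{\mathbb C^r}\otimes\mathcal B(H)$ because the $e_k$ are independent (so no nonzero vector of $\mathbb C^r$ annihilates all of them under the pairing), which is exactly where the hypothesis $\rank\omega=r$ is used.

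For the uniqueness statement, suppose $\xi_\omega'$ is another unit vector in $\mathbb C^r\otimes H$ with $\langle(\mathbf 1_{\mathbb C^r}\otimes b)\xi_\omega',\xi_\omega'\rangle=\omega(b)$ for all $b$ and cyclic for $\mathbf 1_{\mathbb C^r}\otimes\mathcal B(H)$. Then $\xi_\omega$ and $\xi_\omega'$ are both cyclic vectors implementing the same state of $\mathcal B(H)$ inside the same representation space, so by uniqueness of the GNS triple there is a unitary $u$ on $\mathbb C^r\otimes H$ commuting with $\mathbf 1_{\mathbb C^r}\otimes\mathcal B(H)$ with $\xi_\omega'=u\xi_\omega$. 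But the commutant of $\mathbf 1_{\mathbb C^r}\otimes\mathcal B(H)$ in $\mathcal B(\mathbb C^r\otimes H)$ is exactly $\mathcal B(\mathbb C^r)\otimes\mathbf 1_H$, so $u=w\otimes\mathbf 1_H$ for a unitary $w\in\mathcal B(\mathbb C^r)$. Uniqueness of $w$ is immediate since $w\otimes\mathbf 1_H$ is determined by its action on a cyclic vector.

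The only real subtlety — not a deep one — is making precise the claim that the GNS representation of $(\mathcal B(H),\omega)$ is unitarily equivalent to the ampliation $b\mapsto\mathbf 1_{\mathbb C^r}\otimes b$, rather than to some other multiplicity. The clean way is to note that every representation of $\mathcal B(H)$ on a finite-dimensional space is a multiple of the identity representation, and the multiplicity of the GNS representation equals $\rank\omega=r$ because the commutant of the GNS representation is $\cong M_r(\mathbb C)$ (its dimension being the dimension of the space of $\omega$-bounded operators on the GNS space, which one computes to be $r^2$). I expect this bookkeeping about multiplicities to be the main — and essentially the only — obstacle; everything else is the standard GNS machinery. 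Since the lemma is flagged as a textbook exercise, a complete proof need only assemble these standard facts in the order above.
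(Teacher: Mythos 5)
Your proof is correct. The paper gives no proof of this lemma at all --- it is dismissed as ``a textbook exercise on the GNS construction and the representation theory of matrix algebras'' --- and your argument is precisely the standard one being alluded to: the explicit vector $\xi_\omega=\sum_k\sqrt{t_k}\,\varepsilon_k\otimes e_k$ settles existence and cyclicity by direct computation (which in fact makes the multiplicity bookkeeping in your final paragraph unnecessary for the existence half), while GNS uniqueness together with the commutation theorem $(\mathbf 1_{\mathbb C^r}\otimes\mathcal B(H))'=\mathcal B(\mathbb C^r)\otimes\mathbf 1_H$ and cyclicity settles the uniqueness clause.
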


\begin{prop}\label{oeProp1}
Let $\omega$ be a state of $\mathcal B(H)$, let $K_0$ be a Hilbert space of 
dimension $\rank\omega$, and let 
$$
\omega(b)=\langle (\mathbf 1_{K_0}\otimes b)\xi_\omega,\xi_\omega\rangle, \qquad b\in\mathcal B(H)
$$ 
be a representation of $\omega$ with the properties of Lemma \ref{oeLem1}.

For every state $\rho$ of $\mathcal B(K\otimes H)$ that restricts to $\omega$ 
$$
\rho(\mathbf 1_K\otimes b)=\omega(b),\qquad b\in \mathcal B(H), 
$$
and for every vector $\zeta$ in the range $R$ of the density operator of $\rho$, there 
is a unique operator $v\in\mathcal B(K_0,K)$ such that $(v\otimes\mathbf 1_H)\xi_\omega=\zeta$.    
Moreover, the natural map $v\mapsto (v\otimes\mathbf 1_H)\xi_\omega$ from the operator space 
$$
\mathcal S=\{v\in\mathcal B(K_0,K): (v\otimes\mathbf 1_H)\xi_\omega\in R\}
$$
to $R$ defines an isomorphism of complex 
vector spaces $\mathcal S\cong R$.  In particular, $\rank\rho=\dim\mathcal S$.  
\end{prop}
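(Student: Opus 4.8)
The plan is to reduce the proposition to a single linear-algebra observation: since $\dim K_0=\rank\omega$, the Schmidt decomposition of $\xi_\omega$ is of full rank on the $K_0$-factor, and that is precisely what is needed to solve the equation $(v\otimes\mathbf 1_H)\xi_\omega=\zeta$ for $v$.

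First I would fix notation. Set $r=\rank\omega$. By Lemma \ref{oeLem1} and the hypothesis $\dim K_0=r$, write the Schmidt decomposition $\xi_\omega=\sum_{i=1}^r\sqrt{\lambda_i}\,e_i\otimes h_i$ with all $\lambda_i>0$, with $\{e_1,\dots,e_r\}$ an orthonormal basis of $K_0$, and with $\{h_1,\dots,h_r\}$ orthonormal in $H$; the density operator of $\omega$ is then $D_\omega=\sum_i\lambda_i\,p_{h_i}$, $p_{h_i}$ the rank-one projection onto $h_i$, so its range $V=\spn\{h_1,\dots,h_r\}$ is the support of $\omega$. Next I would dispose of uniqueness, the routine half: if $(v\otimes\mathbf 1_H)\xi_\omega=0$ then, using $(v\otimes\mathbf 1_H)(\mathbf 1_{K_0}\otimes b)=(\mathbf 1_K\otimes b)(v\otimes\mathbf 1_H)$, we get $(v\otimes\mathbf 1_H)(\mathbf 1_{K_0}\otimes b)\xi_\omega=0$ for every $b\in\mathcal B(H)$; since $\xi_\omega$ is cyclic for $\mathbf 1_{K_0}\otimes\mathcal B(H)$ these vectors span $K_0\otimes H$, forcing $v\otimes\mathbf 1_H=0$ and hence $v=0$. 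Thus the linear map $\Phi(v)=(v\otimes\mathbf 1_H)\xi_\omega$ on $\mathcal B(K_0,K)$ is injective, and in particular its restriction to $\mathcal S=\Phi^{-1}(R)$ injects $\mathcal S$ into $R$.

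The substance of the argument is showing that $\Phi$ maps $\mathcal S$ onto $R$, and for this I would first prove $R\subseteq K\otimes V$. Let $A$ be the density operator of $\rho$; the restriction hypothesis says that its partial trace over $K$ is a positive multiple of $D_\omega$. Writing $P_V$ for the projection of $H$ onto $V$, the operator $(\mathbf 1_K\otimes(\mathbf 1_H-P_V))A(\mathbf 1_K\otimes(\mathbf 1_H-P_V))$ is positive and its trace equals, up to the same positive constant, $\tr\big((\mathbf 1_H-P_V)D_\omega(\mathbf 1_H-P_V)\big)=0$; a positive operator of zero trace vanishes, so $A(\mathbf 1_K\otimes(\mathbf 1_H-P_V))=0$ and $R=\ran A\subseteq K\otimes V$. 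Now given any $\zeta\in R$, write $\zeta=\sum_{i=1}^r w_i\otimes h_i$ with uniquely determined $w_i\in K$ (possible because $\zeta\in K\otimes V$ and $\{h_i\}$ is an orthonormal basis of $V$), and define $v\in\mathcal B(K_0,K)$ by $v e_i=\lambda_i^{-1/2}w_i$; then $\Phi(v)=\sum_i\sqrt{\lambda_i}\,(ve_i)\otimes h_i=\zeta$. This yields the existence of $v$ for every $\zeta\in R$ (uniqueness having been settled), shows $\zeta\in\Phi(\mathcal S)$, and hence that $\Phi|_{\mathcal S}:\mathcal S\to R$ is a linear isomorphism; in particular $\rank\rho=\dim R=\dim\mathcal S$.

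I expect the inclusion $R\subseteq K\otimes V$ to be the only step requiring real thought — the rest is Schmidt-decomposition bookkeeping together with the cyclicity argument for uniqueness. The one point to watch is the normalization mismatch between $\tr_K A$ and $D_\omega$ (the multiplicity constant from the opening Remark on partial traces), but since only the range of $D_\omega$, not its precise value, enters, any positive proportionality constant is harmless.
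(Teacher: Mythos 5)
Your proof is correct, but it takes a genuinely different route from the paper's. The paper never writes down a Schmidt decomposition: it first shows the implication $(\mathbf 1_{K_0}\otimes b)\xi_\omega=0\Rightarrow(\mathbf 1_K\otimes b)\zeta=0$ (using its Lemma on support projections, which is exactly your inclusion $R\subseteq K\otimes V$ in the form $q\leq\mathbf 1_K\otimes p$), then uses cyclicity to define a single operator $\tilde v$ on all of $K_0\otimes H$ by $\tilde v(\mathbf 1_{K_0}\otimes b)\xi_\omega=(\mathbf 1_K\otimes b)\zeta$, observes that $\tilde v$ commutes with $\mathbf 1\otimes\mathcal B(H)$, and factors it as $v\otimes\mathbf 1_H$. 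You instead make everything explicit: the full-rank Schmidt decomposition $\xi_\omega=\sum_i\sqrt{\lambda_i}\,e_i\otimes h_i$ lets you solve $(v\otimes\mathbf 1_H)\xi_\omega=\zeta$ componentwise once you know $\zeta\in K\otimes V$, which you establish by the positive-operator-of-zero-trace argument (effectively reproving the paper's Lemma \ref{oeLem0} rather than citing it). Your version makes visible the real reason $v$ exists --- the $K_0$-marginal of $\xi_\omega$ is invertible because $\dim K_0=\rank\omega$ --- at the cost of a choice of basis; the paper's intertwining argument is coordinate-free and is the one that transports cleanly to the more general subfactor setting of Section 4. Your closing remark about the normalization of the partial trace is well taken and correctly handled: only the support of $D_\omega$ enters, so the multiplicity constant is irrelevant.
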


\begin{proof}
For existence of the operator $v$, we claim first that for every $b\in\mathcal B(H)$, 
$$
(\mathbf 1_{K_0}\otimes b)\xi_\omega=0\implies (\mathbf 1_K\otimes b)\zeta=0.  
$$ 
Indeed, if $(\mathbf 1_{K_0}\otimes b)\xi_\omega=0$ then $\omega(b^*b)=\|(\mathbf 1_{K_0}\otimes b)\xi_\omega\|^2=0$, 
so that $bp=0$,  $p$ being the support projection of $\omega$.   Since $\zeta$ belongs to the range of the 
support projection $q$ of $\rho$ and since $q\leq \mathbf 1_K\otimes p$ by Lemma \ref{oeLem0}, it follows that 
$(\mathbf 1_K\otimes b)\zeta=(\mathbf 1_K\otimes b)(\mathbf 1_K\otimes p)\zeta=(\mathbf 1_K\otimes bp)\zeta=0$.  

Hence we can define an operator $\tilde v: K_0\otimes H\to K\otimes H$ by 
$$
\tilde v(\mathbf 1_{K_0}\otimes b)\xi_\omega=(\mathbf 1_K\otimes b)\zeta,\qquad b\in \mathcal B(H).  
$$
It is clear from its definition that $\tilde v(\mathbf 1_{K_0}\otimes b)=(\mathbf 1_K\otimes b)\tilde v$ for 
$b\in\mathcal B(H)$, so that $\tilde v$ admits a unique factorization $\tilde v=v\otimes \mathbf 1_H$ with 
$v\in\mathcal B(K_0,K)$,  in 
the sense that $\tilde v(\xi\otimes \eta)=v\xi\otimes \eta$, for $\xi\in K_0$, $\eta\in H$.  

Uniqueness of $v$ is a straightforward consequence of 
the fact that $\xi_\omega$ is cyclic for the algebra $\mathbf 1_{K_0}\otimes\mathcal B(H)$.     
Finally, the last sentence 
is apparent from these assertions, since $v\mapsto (v\otimes \mathbf 1_H)\xi_\omega\in K\otimes H$ is a linear map.  
\end{proof}

Proposition \ref{oeProp1} leads to the following useful operator-theoretic criterion for separability.  
While it does not characterize the property,   we will give   
an operator-theoretic characterization of 
separability later in Proposition \ref{psProp2}.   

\begin{cor}\label{oeCor1}
Let $\omega$, $\xi_\omega$, $\rho$, $R$ and 
$$
\mathcal S=\{v\in\mathcal B(K_0,K): (v\otimes\mathbf 1_H)\xi_\omega\in R\}
$$ 
be as in Proposition \ref{oeProp1}.  Let $w\in \mathcal S$ and let $\zeta=(w\otimes \mathbf 1)\xi_\omega$.  
Then  $\zeta$ has the form $\zeta=\xi\otimes\eta$ for 
vectors $\xi\in K$, $\eta\in H$ iff $\rank(w)\leq 1$.   If $\rho$ is a separable state, then the operator space 
$\mathcal S$ has a basis consisting of rank-one operators.  
\end{cor}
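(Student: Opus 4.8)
The plan is to split the statement into its two halves. The first half---that for $w\in\mathcal S$ the vector $\zeta=(w\otimes\mathbf 1_H)\xi_\omega$ is a simple tensor $\xi\otimes\eta$ precisely when $\rank w\leq 1$---is a direct computation with the representation furnished by Lemma \ref{oeLem1}. First I would unwind what $(w\otimes\mathbf 1_H)\xi_\omega$ looks like: writing $\xi_\omega\in K_0\otimes H$ and using that $\xi_\omega$ is cyclic for $\mathbf 1_{K_0}\otimes\mathcal B(H)$, the map $w\mapsto (w\otimes\mathbf 1_H)\xi_\omega$ is the linear isomorphism $\mathcal S\cong R$ of Proposition \ref{oeProp1}. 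If $\rank w\leq 1$, say $w=\xi\langle\,\cdot\,,\xi_0\rangle$ for $\xi\in K$, $\xi_0\in K_0$, then $(w\otimes\mathbf 1_H)\xi_\omega=\xi\otimes\eta$ where $\eta\in H$ is the partial inner product of $\xi_\omega$ against $\xi_0$ in the $K_0$ variable; this is the easy direction. Conversely, if $(w\otimes\mathbf 1_H)\xi_\omega=\xi\otimes\eta$, I would compare with the Schmidt-type decomposition: expand $\xi_\omega=\sum_i e_i\otimes h_i$ for an orthonormal basis $\{e_i\}$ of $K_0$, so that $(w\otimes\mathbf 1_H)\xi_\omega=\sum_i (we_i)\otimes h_i$; cyclicity of $\xi_\omega$ forces the vectors $\{h_i\}$ to be linearly independent (indeed, $\{h_i\}$ spans the range of the support projection of $\omega$, which has dimension $\rank\omega=\dim K_0$), so equality with $\xi\otimes\eta$ forces every $we_i$ to be a scalar multiple of $\xi$, i.e. $\rank w\leq 1$.

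For the second half---if $\rho$ is separable then $\mathcal S$ has a basis of rank-one operators---I would argue as follows. By definition of separability, $\rho=\sum_{k=1}^\ell t_k\,\sigma_k\otimes\tau_k$ with $t_k>0$, $\sigma_k$ a state of $M=\mathcal B(K)$, $\tau_k$ a state of $N=\mathcal B(H)$. Since $\rho$ restricts to $\omega$ on $N$, we have $\omega=\sum_k t_k\tau_k$. Now the range $R$ of the density operator of $\rho$ is the closed span of the ranges of the density operators of the summands $\sigma_k\otimes\tau_k$. Decomposing each $\sigma_k$ and $\tau_k$ into rank-one pieces (spectral decomposition of their density operators), $R$ is spanned by simple tensors $\xi\otimes\eta$ lying in $R$. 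By the first half of the corollary together with the isomorphism $\mathcal S\cong R$ of Proposition \ref{oeProp1}, each such simple tensor $\xi\otimes\eta\in R$ is $(w\otimes\mathbf 1_H)\xi_\omega$ for a unique $w\in\mathcal S$ with $\rank w\leq 1$. Hence $\mathcal S$ is spanned by rank-one operators, and since a spanning set of a finite-dimensional space contains a basis, $\mathcal S$ has a basis of rank-one operators.

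The one point that needs care---and that I expect to be the main technical obstacle---is verifying that the simple tensors $\xi\otimes\eta$ arising from the spectral pieces of the $\sigma_k\otimes\tau_k$ genuinely lie in the range $R$ of the density operator of $\rho$, not merely in the sum of the ranges of the summands. This follows because all the $t_k$ are strictly positive: the support projection of a finite convex combination $\sum_k t_k A_k$ of positive operators with $t_k>0$ equals the projection onto $\sum_k \operatorname{ran} A_k$, so $R=\sum_k \operatorname{ran}(\text{density of }\sigma_k\otimes\tau_k)$. Once this observation is in place, the reduction to the first half of the corollary is immediate. I would also remark that one could alternatively invoke Proposition \ref{oeProp1} at the very end to pass from ``$R$ is spanned by simple tensors in $R$'' to ``$\mathcal S$ is spanned by rank-one operators,'' avoiding any need to track the operators $w$ explicitly.
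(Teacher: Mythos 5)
Your proposal is correct and follows essentially the same route as the paper: first the rank-one characterization of simple tensors via the cyclicity of $\xi_\omega$ (the paper computes $wK_0\otimes H=(\mathbf 1_K\otimes\mathcal B(H))(w\otimes\mathbf 1_H)\xi_\omega=\xi\otimes H$ directly, where you Schmidt-expand $\xi_\omega$ and use linear independence of the $H$-components --- equivalent arguments resting on the same cyclicity), then the range criterion to span $R$ by product vectors and the isomorphism $\mathcal S\cong R$ of Proposition \ref{oeProp1} to transfer to a rank-one basis of $\mathcal S$. Your explicit justification that the product vectors actually lie in $R$ (the range of a strictly positive combination of positive operators is the sum of the ranges) fills in a step the paper dispatches by simply citing the range criterion.
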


\begin{proof}
Fix $w\in\mathcal S$ and assume first that $(w\otimes\mathbf 1)\xi_\omega$ decomposes into 
a tensor product $\xi\otimes \eta$ 
for vectors $\xi\in K$, $\eta\in H$. We use the fact that 
$\xi_\omega$ is cyclic for $\mathbf 1_{K_0}\otimes\mathcal B(H)$ to write 
\begin{align*}
wK_0\otimes H&=(w\otimes\mathbf 1_H)(\mathbf 1_{K_0}\otimes\mathcal B(H))\xi_\omega
=(\mathbf 1_K\otimes\mathcal B(H))(w\otimes\mathbf 1_H)\xi_\omega
\\
&=\xi\otimes\mathcal B(H)\eta=\xi\otimes H.  
\end{align*}
It follows that $wK_0= \mathbb C\cdot\xi$, as asserted.  Conversely, if $wK_0=\mathbb C\cdot\xi$ 
for some $\xi\in K$, then $(w\otimes\mathbf 1)\xi_\omega\in (w\otimes\mathbf 1)(K\otimes H)\subseteq\xi\otimes H$, 
hence there is a vector 
$\eta\in H$ such that $(w\otimes\mathbf 1)\xi_\omega=\xi\otimes\eta$.  

 If $\rho$ is separable, then it can be 
written as a convex combination of pure separable states of $\mathcal B(K\otimes H)$, and 
this implies that $R$ is spanned by vectors of the form $\xi\otimes \eta$, with $\xi\in K$ and 
$\eta\in H$ (this is known as the {\em range criterion} for separability in the physics literature).  Hence 
there is a linear basis for $R$ consisting of vectors of the form  
$\xi_k\otimes\eta_k$, $k=1,\dots,r$.  
By Proposition \ref{oeProp1}, there are operators $w_1,\dots,w_r\in \mathcal B(K_0,K)$ such that 
$(w_k\otimes\mathbf 1_H)\xi_\omega=\xi_k\otimes\eta_k$, and  Proposition \ref{oeProp1} 
also implies that $w_1,\dots,w_r$ is a linear basis for the operator space $\mathcal S$.  The 
paragraph above implies $\rank w_k\leq 1$ for all $k$.  
\end{proof}

\section{Sums of positive rank-one operators}\label{S:ro}

We require 
the following description of the possible ways a positive finite rank 
operator $A$ can be represented as a sum of positive rank one operators 
$$
A=\xi_1\otimes\bar\xi_1+\cdots+\xi_r\otimes\bar\xi_r.   
$$ 
Significantly, the vectors 
$\xi_1,\dots,\xi_r$ involved in this representation of the operator 
$A$ need not be linearly independent - nor even nonzero - and 
that flexibility is essential for our purposes.  For completeness, we include 
a proof of this bit of the lore of elementary operator theory.

\begin{prop}\label{roProp1}
Let $\xi_1,\dots,\xi_r$ and $\eta_1,\dots,\eta_r$ be two $r$-tuples of vectors in a Hilbert 
space $H$.  Then 
\begin{equation}\label{ssEq3}
\xi_1\otimes\bar\xi_1+\cdots+\xi_r\otimes \bar\xi_r=\eta_1\otimes\bar\eta_1+\cdots+\eta_r\otimes\bar\eta_r.
\end{equation}
iff there is a unitary $r\times r$ matrix $(\lambda_{ij})$ of complex numbers such that 
\begin{equation}\label{ssEq4}
\eta_i=\sum_{j=1}^r\lambda_{ij}\xi_j,\quad \xi_i=\sum_{j=1}^r\bar\lambda_{ji}\eta_j, \qquad 1\leq i\leq r.  
\end{equation}
\end{prop}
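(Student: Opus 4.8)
The plan is to prove both implications of Proposition~\ref{roProp1} by reducing to a linear-algebraic normal form. The ``if'' direction is the routine one: assuming (\ref{ssEq4}) holds with $(\lambda_{ij})$ unitary, I would compute $\sum_i \eta_i\otimes\bar\eta_i$ by substituting $\eta_i=\sum_j\lambda_{ij}\xi_j$ and $\bar\eta_i=\sum_k\bar\lambda_{ik}\bar\xi_k$, so that
$$
\sum_{i=1}^r \eta_i\otimes\bar\eta_i=\sum_{j,k=1}^r\Bigl(\sum_{i=1}^r\lambda_{ij}\bar\lambda_{ik}\Bigr)\,\xi_j\otimes\bar\xi_k=\sum_{j,k=1}^r\delta_{jk}\,\xi_j\otimes\bar\xi_k=\sum_{j=1}^r\xi_j\otimes\bar\xi_j,
$$
using the column-orthonormality $\sum_i\bar\lambda_{ik}\lambda_{ij}=\delta_{jk}$ of the unitary matrix. (The second half of (\ref{ssEq4}), expressing $\xi_i$ in terms of the $\eta_j$, is then just the statement that the inverse of $(\lambda_{ij})$ is its conjugate transpose, so it carries no extra information and need only be recorded for symmetry.)

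For the ``only if'' direction, the key idea is that the common operator $A=\sum_i\xi_i\otimes\bar\xi_i$ has a range $R$ of some dimension $s\le r$, and that the map sending an $r$-tuple $(\xi_1,\dots,\xi_r)$ to $A$ factors through the ``Gram-type'' data. Concretely, form the operator $T:\mathbb C^r\to H$ with $T e_i=\xi_i$; then $A=TT^*$ (with the appropriate conjugation conventions matching the notation $\xi\otimes\bar\xi$ for the rank-one operator $\zeta\mapsto\langle\zeta,\xi\rangle\xi$). Similarly let $S e_i=\eta_i$, so $A=SS^*$ as well. Now the hypothesis is $TT^*=SS^*$, and I want a unitary $r\times r$ matrix $\Lambda$ with $S=T\Lambda^{\mathrm{t}}$ (matching (\ref{ssEq4}): $\eta_i=Se_i=\sum_j\lambda_{ij}Te_j=\sum_j\lambda_{ij}\xi_j$). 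The standard fact here is the operator analogue of polar decomposition: if $TT^*=SS^*$ then $\ker T^*=\ker S^*$ (both equal $R^\perp$), the operators $|T^*|=|S^*|$ agree on $R$, and writing the polar decompositions $T^*=|T^*|U$, $S^*=|S^*|W$ with $U,W:R\to\mathbb C^r$ partial isometries with initial space $R$, one gets $T^*=S^*(W^*U)$ on $R$ and hence $S=T(U^*W)$ with $U^*W$ a partial isometry of $\mathbb C^r$ whose initial and final spaces both equal $(\ker A)^\perp\subseteq\mathbb C^r$. The only remaining task is to enlarge $U^*W$ to a genuine unitary on all of $\mathbb C^r$: since $U^*W$ is an isometry from an $s$-dimensional subspace to an $s$-dimensional subspace of $\mathbb C^r$, and since modifying $S$ on $\ker A$ does not change $SS^*=A$ (the columns $\eta_i$ lying in $\ker A^{\per\perp}$-directions can be freely rotated), one picks any unitary extension; taking $\Lambda=(U^*W)^{\mathrm t}$ or its conjugate as dictated by bookkeeping finishes it.

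The main obstacle I anticipate is purely a matter of conjugation bookkeeping rather than mathematical depth: the notation $\xi\otimes\bar\xi$ for rank-one operators, combined with the fact that (\ref{ssEq4}) uses $\lambda_{ij}$ in one equation and $\bar\lambda_{ji}$ in the other, means one must be scrupulous about whether the relevant matrix acting is $\Lambda$, $\Lambda^{\mathrm t}$, $\bar\Lambda$ or $\Lambda^*$, and about the fact that a unitary $\Lambda$ satisfies $\Lambda^{-1}=\Lambda^*=\bar\Lambda^{\mathrm t}$ so that these distinctions collapse in exactly the way needed to make (\ref{ssEq4}) internally consistent. A clean way to sidestep most of this is to work entirely with the operator $T:\mathbb C^r\to H$, $Te_i=\xi_i$, observe that $\sum_i\xi_i\otimes\bar\xi_i$ equals $TT^*$ under the stated conventions, and then invoke the elementary lemma: \emph{two operators $T,S$ from a finite-dimensional space into $H$ satisfy $TT^*=SS^*$ if and only if $S=TV$ for some unitary $V$ on that finite-dimensional space}; this lemma is itself an immediate consequence of uniqueness of polar decomposition (or of the fact that $T$ and $S$ then have the same ``right-singular-vector'' structure), and padding the partial isometry out to a unitary is harmless precisely because the domain $\mathbb C^r$ has been chosen with $r$ (possibly) larger than $\operatorname{rank}A$. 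Translating $V$ back into matrix entries $\lambda_{ij}$ via $Ve_j=\sum_i\lambda_{ij}e_i$ then yields (\ref{ssEq4}) directly, with the second equation being the rewriting of $V^{-1}=V^*$.
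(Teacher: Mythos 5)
Your proposal is correct and follows essentially the same route as the paper: both encode the tuples as operators $T,S:\mathbb C^r\to H$ (the paper calls them $A,B$), reduce (\ref{ssEq3}) to $TT^*=SS^*$, obtain a partial isometry intertwining $T^*$ and $S^*$, and extend it to a unitary of $\mathbb C^r$ to produce the matrix $(\lambda_{ij})$, with the converse being the same direct unitarity computation. The only difference is cosmetic: you invoke polar decomposition explicitly where the paper constructs the partial isometry directly from $\|T^*\zeta\|=\|S^*\zeta\|$.
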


\begin{proof}  In the statement of Proposition \ref{roProp1}, the notation $\xi\otimes\bar\xi$ denotes the operator 
$\zeta\mapsto \langle \zeta,\xi\rangle \xi$.  In order to show that (\ref{ssEq3}) implies (\ref{ssEq4}),   
consider the two operators $A,B: \mathbb C^r\to H$ defined by 
$$
A(\lambda_1,\dots,\lambda_r)=\sum_k\lambda_k\xi_k, \qquad 
B(\lambda_1,\dots,\lambda_r)=\sum_k\lambda_k\eta_k.
$$ 
The adjoint of $A$ is given by $A^*\zeta=(\langle \zeta,\xi_1\rangle,\dots,\langle \zeta,\xi_r\rangle)$, 
with a similar formula for $B^*$, and the hypothesis (\ref{ssEq3}) becomes $AA^*=BB^*$.  It follows that 
$\|A^*\zeta\|=\|B^*\zeta\|$ for all $\zeta\in H$, and 
we can define a partial isometry $w_0$ with initial space $A^*H$ and final space $B^*H$ by setting 
$w_0(A^*\zeta)=B^*\zeta$, $\zeta\in H$.  Since $\mathbb C^r$ is finite-dimensional, $w_0$ 
can be extended to a unitary operator $w\in\mathcal B(\mathbb C^r)$, and we have $B=Aw^{-1}$.  Letting $e_1,\dots,e_r$ be 
the usual basis for $\mathbb C^r$,  we find that the matrix $(\lambda_{ij})$ of $w^{-1}$ 
relative to $(e_k)$ satisfies 
$$
\eta_i=Be_i=Aw^{-1}e_i=\sum_{j=1}^r \lambda_{ij}Ae_j=\sum_{j=1}^r\lambda_{ij}\xi_j.  
$$ 
The second formula of (\ref{ssEq4}) follows from the line above after substituting these formulas for 
$\eta_k$ in $\sum_k\bar\lambda_{ki}\eta_k$ and using unitarity of the matrix $(\lambda_{ij})$.  

The converse is a straightforward calculation using unitarity of the matrix $(\lambda_{ij})$ 
that we omit.  
\end{proof}

\section{Parameterizing the extensions of a state}\label{S:ss}

Let $H$, $K$ be Hilbert spaces satisfying $n=\dim H\leq m=\dim K<\infty$.  
Given a state $\omega$ of $\mathcal B(H)$, we consider the compact convex set $E(\omega)$ 
of all extensions of $\omega$ to a state of $\mathcal B(K\otimes H)$.  
Remark \ref{ssRem1} shows that without loss of generality, we can restrict attention to 
the case in which $\omega$ is a {\em faithful} state of $\mathcal B(H)$, and we do so.

Consider the filtration of $E(\omega)$ into compact subspaces 
$$
E^1(\omega)\subseteq E^2(\omega)\subseteq \cdots\subseteq E^{mn}(\omega)=E(\omega),  
$$
where $E^r(\omega)$ denotes the space of all states of $E(\omega)$ satisfying 
$\rank \rho\leq r$.  The spaces $E^r(\omega)$ are no longer convex; but since 
$\dim K\geq \dim H$, one can exhibit pure states in $E(\omega)$ -  for example, 
the state $\rho(x)=\langle x\zeta,\zeta\rangle$, where $\zeta$ is 
a unit vector in $K\otimes H$ of the form 
\begin{equation}\label{ssEq0}
\zeta=\sqrt{\lambda_1}\cdot f_1\otimes e_1+\cdots +\sqrt{\lambda_n}\cdot f_n\otimes e_n
\end{equation}
where $e_1,\dots,e_n$ is an orthonormal basis for $H$ consisting of eigenvectors of the density operator 
of $\omega$ with $\lambda_1,\dots,\lambda_n$ the corresponding eigenvalues, and 
where $f_1,\dots,f_n$ is an arbitrary orthonormal set in $K$.   In particular, 
the spaces $E^r(\omega)$ are nonempty for every $r\geq 1$.

Now fix an integer $r$ in the range $1\leq r\leq mn$.  We define a map from 
the noncommutative sphere $V^r(H,K)$ to $E^r(\omega)$ as follows.  
Since $\omega$ is faithful, Lemma \ref{oeLem1} implies that there is a vector $\xi_\omega\in H\otimes H$ 
such that 
\begin{equation}\label{ssEq1}
{\rm{span\,}}(\mathbf 1_H\otimes\mathcal B(H))\xi_\omega=H\otimes H, 
\qquad \omega(b)=\langle (\mathbf 1\otimes b)\xi_\omega,\xi_\omega\rangle, \quad b\in N. 
\end{equation}
Choose an $r$-tuple $v=(v_1,\dots,v_r)\in V^r(H,K)$.   
Since each $v_k\otimes \mathbf 1_H$ maps $H\otimes H$ to $K\otimes H$, we can 
define a linear functional $\rho_{v}$ on $\mathcal B(K\otimes H)$ as follows 
\begin{equation}\label{ssEq2}
\rho_{v}(x)=\sum_{k=1}^r\langle x(v_k\otimes\mathbf 1_H)\xi_\omega,(v_k\otimes\mathbf 1_H)\xi_\omega\rangle, 
\qquad x\in\mathcal B(K\otimes H).  
\end{equation}
Clearly $\rho_v$ is positive, and since $v_1^*v_1+\cdots+ v_r^*v_r=\mathbf 1_H$, we have 
\begin{equation*}
\rho_v(\mathbf 1_K\otimes b)=\sum_{k=1}^r\langle (v_k^*v_k\otimes b)\xi_\omega,\xi_\omega\rangle 
=\langle (\mathbf 1_H\otimes b)\xi_\omega,\xi_\omega\rangle=\omega(b),    
\end{equation*}
for all $b\in \mathcal B(H)$.  It is obvious that the rank of $\rho_v$ cannot exceed $r$, hence 
$\rho_v\in E^r(\omega)$.  The purpose of this section is to prove:

\begin{thm}\label{ssThm1}  Let $H$, $K$ be Hilbert spaces of respective dimensions 
$n\leq m$, let $\omega$ be a faithful state of $\mathcal B(H)$, fix a vector $\xi_\omega\in H\otimes H$ 
as in (\ref{ssEq1}), and define a map 
$$
v\in V^r(H,K)\mapsto \rho_{v}\in E^r(\omega)
$$
as in (\ref{ssEq2}).  
Then $\rho_{v}=\rho_{v^\prime}$ iff there is 
an $r\times r$ unitary matrix of scalars $\lambda\in U(r)$ such that $v^\prime=\lambda\cdot v$.  
Moreover, for every $r=1,2,\dots,mn$, this map 
is a continuous surjection that maps open subsets of $V^r(H,K)$ 
to relatively open subsets of $E^r(\omega)$.  

If $\xi_\omega^\prime\in H$ is another vector satisfying (\ref{ssEq1}), giving rise to another  map 
$$
v\in V^r(H,K)\mapsto \rho^\prime_v\in E^r(\omega), 
$$ 
then there is a unitary operator $w\in \mathcal B(H)$ 
satisfying $\rho_v^\prime=\rho_{v\cdot w}$ for all $v$, where $(v_1,\dots,v_r)\cdot w=(v_1w,\dots,v_rw)$ 
denotes the right action of $w\in\mathcal U(H)$  on $v=(v_1,\dots,v_r)\in V^r(H,K)$.  
\end{thm}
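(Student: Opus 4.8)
The plan is to establish the three assertions of Theorem \ref{ssThm1} in the order they are stated, relying heavily on Proposition \ref{roProp1} and on the structural results of Section \ref{S:oe}.

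First I would prove the equivalence $\rho_v = \rho_{v'} \iff v' = \lambda\cdot v$ for some $\lambda\in U(r)$. The ``if'' direction is a direct computation: if $v'_i = \sum_j \lambda_{ij}v_j$ with $(\lambda_{ij})$ unitary, then setting $\zeta_k = (v_k\otimes \mathbf 1_H)\xi_\omega$ and $\zeta'_k = (v'_k\otimes\mathbf 1_H)\xi_\omega$, one has $\zeta'_i = \sum_j \lambda_{ij}\zeta_j$, and unitarity of $(\lambda_{ij})$ gives $\sum_k \zeta'_k\otimes\bar\zeta'_k = \sum_k \zeta_k\otimes\bar\zeta_k$ as operators on $K\otimes H$, which is exactly the statement $\rho_v = \rho_{v'}$. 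For the ``only if'' direction, $\rho_v = \rho_{v'}$ says precisely that $\sum_k \zeta_k\otimes\bar\zeta_k = \sum_k \zeta'_k\otimes\bar\zeta'_k$, so Proposition \ref{roProp1} yields a unitary $r\times r$ scalar matrix $(\lambda_{ij})$ with $\zeta'_i = \sum_j \lambda_{ij}\zeta_j$, i.e. $((v'_i - \sum_j\lambda_{ij}v_j)\otimes\mathbf 1_H)\xi_\omega = 0$; since $\xi_\omega$ is cyclic for $\mathbf 1_H\otimes\mathcal B(H)$, the factorization uniqueness (as in Proposition \ref{oeProp1}) forces $v'_i = \sum_j \lambda_{ij}v_j$, that is $v' = \lambda\cdot v$.

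Next, for surjectivity onto $E^r(\omega)$: given $\rho\in E^r(\omega)$ with $\rank\rho = s \leq r$, let $R$ be the range of its density operator and pick vectors $\zeta_1,\dots,\zeta_s\in R$ with $\rho(x) = \sum_{k=1}^s \langle x\zeta_k,\zeta_k\rangle$ (these exist by diagonalizing the density operator, and one may pad with $\zeta_{s+1}=\cdots=\zeta_r = 0$ to get an $r$-tuple). By Proposition \ref{oeProp1}, each $\zeta_k$ equals $(v_k\otimes\mathbf 1_H)\xi_\omega$ for a unique $v_k\in\mathcal B(H,K)$; the relation $\rho(\mathbf 1_K\otimes b) = \omega(b)$ unwinds, again using cyclicity of $\xi_\omega$, to the identity $v_1^*v_1 + \cdots + v_r^*v_r = \mathbf 1_H$, so $v=(v_1,\dots,v_r)\in V^r(H,K)$ and $\rho = \rho_v$. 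Continuity of $v\mapsto\rho_v$ is clear from formula (\ref{ssEq2}). The openness claim follows by a standard argument: the map factors through the quotient $V^r(H,K)\to X^r$ (by the first part, $\rho_v$ depends only on $\dot v$), inducing a continuous \emph{injection} $X^r\to E^r(\omega)$; since $X^r$ is compact and $E^r(\omega)$ Hausdorff, this injection is a homeomorphism onto its image $E^r(\omega)$, and a continuous open surjection $V^r(H,K)\to X^r$ (quotient by a compact group action) composed with a homeomorphism is open.

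Finally, for the dependence on the choice of $\xi_\omega$: if $\xi'_\omega$ also satisfies (\ref{ssEq1}), then by the uniqueness clause of Lemma \ref{oeLem1} there is a unitary $w\in\mathcal B(H)$ with $\xi'_\omega = (\mathbf 1_H\otimes w)\xi_\omega$ — here one identifies the ``$K_0$'' of Lemma \ref{oeLem1} with the second copy of $H$, so the ambient unitary acts on the ``$\mathbb C^r$'' slot, which in the present $H\otimes H$ picture is the second tensor factor. Then $(v_k\otimes\mathbf 1_H)\xi'_\omega = (v_k\otimes w)\xi_\omega = (\mathbf 1_K\otimes w)(v_k\otimes\mathbf 1_H)\xi_\omega$, and conjugating $\rho'_v$ by the unitary $\mathbf 1_K\otimes w^{-1}$... one checks instead directly that $\rho'_v(x) = \sum_k \langle x(\mathbf 1_K\otimes w)(v_k\otimes\mathbf 1_H)\xi_\omega, (\mathbf 1_K\otimes w)(v_k\otimes\mathbf 1_H)\xi_\omega\rangle$; rewriting $(v_k\otimes w) = (v_k w'\otimes\mathbf 1_H)$ is \emph{not} valid, so the correct move is to relate $\xi'_\omega$ to $\xi_\omega$ via a unitary acting on the \emph{first} $H$ (the one playing the role of the GNS multiplicity space). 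The main obstacle is precisely pinning down which tensor slot the intertwining unitary acts on: in (\ref{ssEq1}) the vector $\xi_\omega$ lives in $H\otimes H$ with $\mathcal B(H)$ acting on the right factor, so the commutant action (the GNS ``source'' of non-uniqueness) is on the left factor, and Lemma \ref{oeLem1} gives $\xi'_\omega = (w\otimes\mathbf 1_H)\xi_\omega$ for a unitary $w$ on the left $H$; then $(v_k\otimes\mathbf 1_H)\xi'_\omega = (v_k w\otimes\mathbf 1_H)\xi_\omega$, so $\rho'_v = \rho_{v\cdot w}$ with $v\cdot w = (v_1 w,\dots,v_r w)$ the right action of $\mathcal U(H)$, exactly as claimed. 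Once the correct slot is identified this last step is a one-line substitution.
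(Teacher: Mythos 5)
Your proposal is correct and follows essentially the same route as the paper: Proposition \ref{roProp1} plus the uniqueness clause of Proposition \ref{oeProp1} for the $U(r)$-equivalence, padding with zero vectors and cyclicity of $\xi_\omega$ for surjectivity, factoring through the compact quotient $X^r$ for openness, and the commutant unitary $w\otimes\mathbf 1_H$ for the change of $\xi_\omega$. The brief detour in your last paragraph about which tensor slot the unitary acts on resolves correctly to the paper's argument, so no gap remains.
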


\begin{proof}[Proof of Theorem \ref{ssThm1}]
Let $v=(v_1,\dots,v_r)$ 
and $v^\prime=(v_1^\prime,\dots,v_r^\prime)$ belong to $V_r(H,K)$, and assume first that $\rho_v=\rho_{v^\prime}$.  
Define vectors $\xi_k, \xi^\prime_k\in K\otimes H$ by $\xi_k=(v_k\otimes\mathbf 1_H)\xi_\omega$, 
$\xi^\prime_k=(v_k^\prime\otimes\mathbf 1_H)\xi_\omega$, $k=1,\dots,r$.  The density operators of 
$\rho_v$ and $\rho_{v^\prime}$ are 
$$
\sum_{k=1}^r\xi_k\otimes\bar\xi_k, \quad {\rm{and}}\quad \sum_{k=1}^r\xi_k^\prime\otimes \bar\xi_k^\prime 
$$
respectively, so that the hypothesis $\rho_v=\rho_{v^\prime}$ 
is equivalent to the assertion 
$$
\sum_{k=1}^r\xi_k\otimes\bar\xi_k=\sum_{k=1}^r\xi_k^\prime\otimes \bar\xi_k^\prime.
$$ 
By Proposition \ref{roProp1}, there is a unitary $r\times r$ matrix $(\lambda_{ij})$ of scalars such that 
$$
\xi_i^\prime=\sum_{j=1}^r \lambda_{ij}\xi_j, \qquad 1\leq i\leq r.     
$$
Proposition \ref{oeProp1} implies that 
$v_i^\prime=\sum_j\lambda_{ij}v_j$, $1\leq i\leq r$, hence $v^\prime=\lambda\cdot r$.  

Conversely, suppose there is a unitary matrix $\lambda=(\lambda_{ij})\in M_r(\mathbb C)$ such 
that  
$v^\prime=\lambda\cdot v$, and consider the vectors in $K\otimes H$ defined 
by $\xi_k=(v_k\otimes\mathbf 1_H)\xi_\omega$, $\xi_k^\prime=(v_k^\prime\otimes\mathbf 1_K)\xi_\omega$, 
$1\leq k\leq r$.  The relation $v^\prime=\lambda\cdot v$ implies that  
\begin{equation}\label{ssEq5}
\xi_i^\prime=\sum_{j=1}^r\lambda_{ij}\xi_j, 
\end{equation}
and the density operators of $\rho_v$ and $\rho_{v^\prime}$ are given 
respectively by 
$$
\sum_{k=1}^r \xi_k\otimes\bar\xi_k,\qquad \sum_{k=1}^r \xi_k^\prime\otimes\bar\xi_k^\prime.  
$$
Substitution of (\ref{ssEq5}) into the term on the right gives 
$$
\sum_{k=1}^r \xi_k^\prime\otimes\bar\xi_k^\prime=\sum_{k,p,q=1}^r \lambda_{kp}\bar\lambda_{kq}\xi_p\otimes\bar\xi_q.  
$$
Since $(\lambda_{ij})$ is a unitary matrix, this implies 
$\sum_k\xi_k^\prime\otimes\bar\xi_k^\prime=\sum_p\xi_p\otimes\bar\xi_p$, and 
$\rho_{v^\prime}=\rho_v$ follows.    

The preceding paragraphs imply that 
the mapping $v\mapsto \rho_v$ factors through the quotient $X^r=V^r(H,K)/U(r)$
$$
v\in V^r(H,K) \to \dot v\in X^r \to \rho_v,   
$$ 
and that the second map $\dot v\mapsto \rho_v$ is continuous and injective.  Hence 
it is a homeomorphism of $X^r$ onto its range, and the composite 
map $v\mapsto \rho_v$ is continuous and maps open sets to 
relatively open subsets of its range.  

It remains to show that every state of $E^r(\omega)$ belongs to the range of 
$v\mapsto \rho_v$.  Choose $\rho\in E^r(\omega)$.  Since the rank of $\rho$ is at most 
$r$ we can write it in the form 
\begin{equation}\label{ssEq6}
\rho(x)=\sum_{k=1}^r\langle x\zeta_k,\zeta_k\rangle,\qquad x\in\mathcal B(K\otimes H), 
\end{equation}
where the $\zeta_k$ are vectors in $K\otimes H$, perhaps with some being zero.

By Proposition \ref{oeProp1}, there are operators $v_1,\dots,v_r\in\mathcal B(H,K)$ such that 
$\zeta_k=(v_k\otimes\mathbf 1_H)\xi_\omega$ for each $k$, and we claim that 
$\sum_kv_k^*v_k=\mathbf 1_H$.  Indeed, for all $b_1,b_2\in \mathcal B(H)$ we have 
\begin{align*}
\langle (\sum_k v_k^*v_k)\otimes b_1)\xi_\omega,&(\mathbf 1_H\otimes b_2)\xi_\omega\rangle =
\sum_k\langle (v_k\otimes b_2^*b_1)\xi_\omega,(v_k\otimes \mathbf 1_H)\xi_\omega\rangle
\\
&=\sum_k\langle (\mathbf 1_K\otimes b_2^*b_1)\zeta_k,\zeta_k\rangle=\rho(\mathbf 1_K\otimes b_2^*b_1)
\\
&=\omega(b_2^*b_1)=\langle (\mathbf 1_H\otimes b_1)\xi_\omega,(\mathbf 1_H\otimes b_2)\xi_\omega\rangle,   
\end{align*}
and $\sum_kv_k^*v_k=\mathbf 1_H$ follows from cyclicity: 
$H\otimes H=(\mathbf 1_H\otimes \mathcal B(H))\xi_\omega$.  

Substituting back into (\ref{ssEq6}), we see that $v=(v_1,\dots,v_r)\in V^r(H,K)$ has been exhibited 
with the property $\rho=\rho_v$.  

To prove the last paragraph, choose another $\xi_\omega^\prime\in H$ satisfying (\ref{ssEq1}).  Then we have 
$\|(\mathbf 1\otimes b)\xi_\omega\|^2=\omega(b^*b)=\|(\mathbf 1\otimes b)\xi_\omega^\prime\|$ for 
every $b\in \mathcal B(H)$, hence there is a unique unitary operator in the commutant of $\mathbf 1\otimes\mathcal B(H)$
that maps $\xi_\omega$ to $\xi_\omega^\prime$.  Such an operator has the form $w\otimes \mathbf 1$ for a unique 
unitary operator $w\in \mathcal B(H)$, hence $\xi_\omega^\prime=(w\otimes \mathbf 1)\xi_\omega$.  
From the definition of the map (\ref{ssEq2}), it follows that 
the corresponding state $\rho_v^\prime$ is defined on $x\in\mathcal B(K\otimes H)$ by 
$$
\rho^\prime_v(x)=\sum_{k=1}^r\langle x(v_k\otimes\mathbf 1)\xi_\omega^\prime,(v_k\otimes\mathbf 1)\xi_\omega^\prime\rangle=
\sum_{k=1}^r\langle x(v_kw\otimes\mathbf 1)\xi_\omega,(v_kw\otimes \mathbf 1)\xi_\omega\rangle, 
$$
and the right side is seen to be $\rho_{v\cdot w}(x)$.  
\end{proof}

\section{The role of $(X^r,P^r)$ in entanglement}\label{S:ps}

In this section we give an operator-theoretic characterization of 
separable states and show that the probability of entanglement 
is positive at all levels (see Theorem \ref{psThm1}).    

Assume that $n=\dim H\leq m=\dim K<\infty$, fix $r=1,2,\dots, mn$,  
choose a faithful state $\omega$ of $\mathcal B(H)$, and choose a vector $\xi_\omega$ 
as in (\ref{ssEq1}).  
Theorem \ref{ssThm1} implies that the parameterizing 
map $v\in V^r(H,K)\mapsto \rho_v\in E^r(\omega)$ decomposes naturally into a 
composition of two maps 
\begin{equation}\label{psEq1}
v\in V^r(H,K)\mapsto \dot v\in X^r\mapsto \rho_v\in E^r(\omega).  
\end{equation}

We can promote 
the invariant probability measure $\mu$ on $V^r(H,K)$ all the way to $E^r(\omega)$ by way 
of the composite map 
$$
v\in V^r(H,K)\mapsto \rho_v\in E^r(\omega)
$$ 
thereby obtaining a compact metrizable probability space $(E^r(\omega),P^{r,\omega})$.  
\begin{rem}[Independence of the choice of $\omega$]\label{psRem0}
After noting that the second map 
of (\ref{psEq1}) implements a measure-preserving 
homeomorphism of topological probability spaces 
$(X^r,P^r)\cong (E^r(\omega),P^{r,\omega})$, we conclude that 
{\em each of the probability spaces $(E^r(\omega),P^{r,\omega})$ associated with 
faithful states of $\mathcal B(H)$ is 
isomorphic to the intrinsic space $(X^r,P^r)$, hence they are all isomorphic to each other.}
\end{rem}

\begin{rem}[Independence of the choice of $\xi_\omega$]
If we choose another vector $\xi_\omega^\prime\in H$ satisfying (\ref{ssEq1}), the 
resulting parameterization $v\mapsto\rho_v^\prime$ of $E^r(\omega)$ differs from that of (\ref{psEq1}), 
hence the resulting 
probability measure $P^{r,\omega \prime}$ on $E^r(\omega)$ 
appears to differ from the one $P^{r,\omega}$ promoted through the map $v\mapsto \rho_v$.   
However, Theorem \ref{ssThm1} implies that 
there is a unitary operator $w\in\mathcal U(H)$ such that 
$\rho_v^\prime=\rho_{v\cdot w}$,  $v\in V^r(H,K)$, so that 
$P^{r,\omega}$ and $P^{r,\omega \prime}$  are respectively promotions (through the same map 
$v\mapsto \rho_v$)
of the measure $P^r$ and its transform $P^{r \prime}$ under the right action of $w$ on $X^r$.  
Remark \ref{upsRem1} implies that $P^{r \prime}=P^r$, hence $P^{r,\omega \prime}=P^{r,\omega}$, 
and therefore $(E^r(\omega),P^{r,\omega})$ does not depend on the choice of $\xi_\omega$.   
\end{rem}

\begin{rem}[Invariance of rank and separability]\label{psRem1}
It is not obvious that spatial properties of states such as rank and separability are 
preserved under these identifications.  For example, it is not clear that the integer-valued 
random variable that represents rank on the probability space $(E^r(\omega),P^{r,\omega})$
$$
\rho\in E^r(\omega)\mapsto \rank\rho\in \{1,2,\dots,r\}  
$$
is preserved under the isomorphism 
$(E^r(\omega_1), P^{r,\omega_1})\cong (E^r(\omega_2),P^{r,\omega_2})$ for different faithful states $\omega_1$ and 
$\omega_2$.  Similarly, we require that these identifications should 
preserve separability and entanglement.  We establish the invariance of these properties 
in Propositions \ref{psProp1} and \ref{psProp2} below 
by identifying them appropriately 
in terms of random variables on 
the intrinsic probability space $(X^r,P^r)$.  
\end{rem}

We first establish the invariance of rank.  

\begin{prop}\label{psProp1}
Let $\omega$ be a faithful state of $\mathcal B(H)$, fix 
$r=1,2,\dots,mn$  and consider the factorization (\ref{psEq1}) through $X^r$ of the 
parameterization map $v\mapsto \rho_v$.  For every $v\in V^r(H,K)$, one has   
\begin{equation}\label{psEq2}
\rank(\dot v)=\rank\rho_v,   
\end{equation}
and almost surely, states of $(E^r(\omega),P^{r,\omega})$ have rank $r$. 
\end{prop}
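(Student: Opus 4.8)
The plan is to establish the pointwise rank identity (\ref{psEq2}) by tracking ranks through the construction of $\rho_v$, and then to read off the almost-sure statement from Theorem \ref{upsThm1} via the measure-preserving homeomorphism built into the factorization (\ref{psEq1}).

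First I would recall that for $v=(v_1,\dots,v_r)\in V^r(H,K)$ the density operator of $\rho_v$ is the positive operator $\sum_{k=1}^r\zeta_k\otimes\bar\zeta_k$ on $K\otimes H$, where $\zeta_k=(v_k\otimes\mathbf 1_H)\xi_\omega$; this computation already appears in the proof of Theorem \ref{ssThm1}. Since $\langle(\sum_k\zeta_k\otimes\bar\zeta_k)\eta,\eta\rangle=\sum_k|\langle\eta,\zeta_k\rangle|^2$ for $\eta\in K\otimes H$, the kernel of this operator is $\spn\{\zeta_1,\dots,\zeta_r\}^\perp$, so its range $R$ equals $\spn\{\zeta_1,\dots,\zeta_r\}$ and hence $\rank\rho_v=\dim\spn\{\zeta_1,\dots,\zeta_r\}$. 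Next I would invoke Proposition \ref{oeProp1} (or, directly, cyclicity of $\xi_\omega$ for $\mathbf 1_H\otimes\mathcal B(H)$): the linear map $a\mapsto(a\otimes\mathbf 1_H)\xi_\omega$ is injective on $\mathcal B(H,K)$, since $(a\otimes\mathbf 1_H)\xi_\omega=0$ implies $(a\otimes b)\xi_\omega=(\mathbf 1_K\otimes b)(a\otimes\mathbf 1_H)\xi_\omega=0$ for all $b\in\mathcal B(H)$, forcing $a=0$. Applying this injective map to $\mathcal S_v=\spn\{v_1,\dots,v_r\}$ gives $\dim\mathcal S_v=\dim\spn\{\zeta_1,\dots,\zeta_r\}=\dim R$, and, combined with the definition (\ref{upsEq2}) $\rank(\dot v)=\dim\mathcal S_v$ of the rank random variable, this yields $\rank(\dot v)=\rank\rho_v$, which is (\ref{psEq2}).

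For the almost-sure assertion, I would use Theorem \ref{upsThm1}, which gives $P^r\{x\in X^r:\rank(x)\neq r\}=0$. The second arrow $\dot v\mapsto\rho_v$ of (\ref{psEq1}) is, by Theorem \ref{ssThm1}, a continuous bijection from the compact space $X^r$ onto the Hausdorff space $E^r(\omega)$, hence a homeomorphism, and $P^{r,\omega}$ is by construction the push-forward of $P^r$ under it. In view of (\ref{psEq2}) the exceptional set $\{\rho\in E^r(\omega):\rank\rho\neq r\}$ is precisely the homeomorphic image of $\{x\in X^r:\rank(x)\neq r\}$, so it has $P^{r,\omega}$-measure zero; equivalently, almost every state of $(E^r(\omega),P^{r,\omega})$ has rank $r$. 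Since the whole argument is just a bookkeeping of dimensions through these two identifications, I do not anticipate a substantive obstacle; the one point that needs a little care is the identification of the range of the density operator of $\rho_v$ with $\mathcal S_v$ via the injective map $a\mapsto(a\otimes\mathbf 1_H)\xi_\omega$.
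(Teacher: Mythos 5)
Your argument is correct and follows the same route as the paper, which simply cites the last sentence of Proposition \ref{oeProp1} for the identity (\ref{psEq2}) and Theorem \ref{upsThm1} (transported through the homeomorphism of Theorem \ref{ssThm1}) for the almost-sure statement. You have merely unpacked the details of that citation — identifying the range of $\sum_k\zeta_k\otimes\bar\zeta_k$ with $\spn\{\zeta_1,\dots,\zeta_r\}$ and using injectivity of $a\mapsto(a\otimes\mathbf 1_H)\xi_\omega$ — all of which is accurate.
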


\begin{proof}  Formula (\ref{psEq2}) simply restates the last sentence of Proposition \ref{oeProp1}, 
and the second phrase follows from Theorem \ref{upsThm1}.
\end{proof}

In order to establish a similar invariance result for the 
probability of entanglement/separability of states, we require 
an operator-theoretic characterization of separability (Proposition \ref{psProp2}).  
In turn, that requires a known upper bound that we collect 
in the following Lemma.

\begin{lem}\label{psLem1}
Every separable state of $\mathcal B(K\otimes H)$ is a convex combination of at most $m^2n^2$ pure 
separable states.  
\end{lem}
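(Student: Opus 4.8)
The plan is to deduce this from the refined form of Carath\'eodory's theorem for states recorded in the last paragraph of Remark \ref{enRem1}, applied to the Hilbert space $K\otimes H$, whose dimension is $mn$. The three ingredients needed are: an identification of the pure separable states, a verification that the set of all separable states is the convex hull of that set, and the compactness needed to legitimately invoke the Carath\'eodory bound (though for the bound itself only the convex-hull structure is used).

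First I would pin down the pure separable states. A pure state of $\mathcal B(K\otimes H)$ is a vector state $x\mapsto\langle x\zeta,\zeta\rangle$ for a unit vector $\zeta\in K\otimes H$, and such a state is separable exactly when $\zeta$ is a product vector $\zeta=\xi\otimes\eta$: one direction is clear, and conversely a pure state is an extreme point of the state space, so if it is a convex combination of product states it must coincide with one of them, and the product of two vector states is the vector state of the product vector. Hence the set $\mathcal P$ of pure separable states is the continuous image of the product of the unit spheres of $K$ and $H$, and in particular is compact.

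Next I would observe that the separable states are precisely the elements of the convex hull of $\mathcal P$. Indeed, an arbitrary product state $\sigma\otimes\tau$ of $M\otimes N$ is itself a convex combination of pure product states: write $\sigma$ and $\tau$ as convex combinations of pure states of $M$ and $N$ respectively and expand the tensor product. Since by definition every separable state is a convex combination of product states, it follows that the set of separable states equals $\operatorname{conv}(\mathcal P)$.

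Finally I would quote the refined Carath\'eodory statement of Remark \ref{enRem1}: for a Hilbert space of dimension $N$, every state of the associated matrix algebra that lies in the convex hull of an arbitrary set of states is already a convex combination of at most $N^2$ of them. Taking $N=\dim(K\otimes H)=mn$ and the set to be $\mathcal P$ yields that every separable state of $\mathcal B(K\otimes H)$ is a convex combination of at most $(mn)^2=m^2n^2$ pure separable states. There is no genuine obstacle here; the only steps that require a word of justification are the identification of $\operatorname{conv}(\mathcal P)$ with the separable states and keeping the dimension bookkeeping straight when applying the Carath\'eodory estimate.
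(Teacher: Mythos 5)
Your argument is correct and follows the same route as the paper, whose entire proof is the one-line remark that this is ``a straightforward application of Remark \ref{enRem1}''; you have simply supplied the two supporting observations (that the pure separable states are exactly the vector states of product vectors, and that the separable states are the convex hull of these) that the paper leaves implicit. The dimension bookkeeping, $N=\dim(K\otimes H)=mn$ giving the bound $N^2=m^2n^2$, matches the paper exactly.
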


\begin{proof}
A straightforward application of Remark \ref{enRem1}.  
\end{proof}

Throughout the remainder of this section, we set $q=m^2n^2$ and let $U(q)$ be 
group of all $q\times q$ unitary matrices $\mu=(\mu_{ij})\in M_q(\mathbb C)$.  

\begin{prop}\label{psProp2}
Let $\omega$ be a faithful state of $\mathcal B(H)$, let $\rho\in E^r(\omega)$, and 
choose $v\in V^r(H,K)$ such that $\rho=\rho_v$.   Then $\rho$ is separable iff there is 
a unitary matrix $\mu=(\mu_{ij})$ in $U(q)$ such that 
\begin{equation}\label{psEq3}
\rank(\sum_{j=1}^r\mu_{ij}v_j)\leq 1,\qquad i=1,2,\dots,q.  
\end{equation}
\end{prop}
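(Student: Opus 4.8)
The plan is to run everything through two pieces of machinery already in place: the correspondence $v\mapsto (v\otimes\mathbf 1_H)\xi_\omega$ between operators in $\mathcal B(H,K)$ and vectors in the range $R$ of the density operator of $\rho=\rho_v$ (Proposition~\ref{oeProp1}), and the description of all ways of writing a positive operator as a sum of positive rank-one operators (Proposition~\ref{roProp1}); the link between ``rank $\leq 1$'' and ``product vector'' is supplied by Corollary~\ref{oeCor1}. Throughout I use that $\omega$ is faithful, so $K_0=H$ and $\xi_\omega\in H\otimes H$ as in (\ref{ssEq1}), that the density operator of $\rho_v$ is $A=\sum_{k=1}^r\zeta_k\otimes\bar\zeta_k$ with $\zeta_k=(v_k\otimes\mathbf 1_H)\xi_\omega$, and that $q=m^2n^2\geq mn\geq r$, so the $r$-tuple $v$ may be padded to a $q$-tuple $(v_1,\dots,v_r,0,\dots,0)$, still lying in $V^q(H,K)$ and still giving the same operator $A$.

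For the ``if'' direction, suppose $\mu=(\mu_{ij})\in U(q)$ satisfies (\ref{psEq3}), and put $w_i=\sum_{j=1}^r\mu_{ij}v_j$ and $\zeta_i'=(w_i\otimes\mathbf 1_H)\xi_\omega=\sum_{j=1}^q\mu_{ij}\zeta_j$, where $\zeta_j=0$ for $j>r$. The converse half of Proposition~\ref{roProp1}, applied with the unitary matrix $(\mu_{ij})$, gives $\sum_{i=1}^q\zeta_i'\otimes\bar\zeta_i'=\sum_{j=1}^q\zeta_j\otimes\bar\zeta_j=A$. By Corollary~\ref{oeCor1}, $\rank w_i\leq 1$ forces $\zeta_i'$ to be a product vector $\xi_i\otimes\eta_i$, or $0$, so $\zeta_i'\otimes\bar\zeta_i'$ is a nonnegative multiple of the density operator of a pure product state of $\mathcal B(K\otimes H)$; since the multiples sum to $\tr A=1$, $\rho$ is a convex combination of pure product states, hence separable.

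For the ``only if'' direction, assume $\rho$ separable. Lemma~\ref{psLem1} expresses $\rho$ as a convex combination of at most $q$ pure product states, and padding with zero weights we may take exactly $q$ of them; thus $A=\sum_{i=1}^q\zeta_i'\otimes\bar\zeta_i'$ with each $\zeta_i'$ a (possibly zero) product vector in $K\otimes H$. Since $A$ is a sum of positive rank-one operators one has $\ker A=(\spn\{\zeta_i'\})^\perp$, so each $\zeta_i'$ lies in $R=\ran A$; Proposition~\ref{oeProp1} then yields a unique $w_i\in\mathcal B(H,K)$ with $(w_i\otimes\mathbf 1_H)\xi_\omega=\zeta_i'$, and Corollary~\ref{oeCor1} gives $\rank w_i\leq 1$ because $\zeta_i'$ is a product vector. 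Now $A=\sum_{k=1}^q\zeta_k\otimes\bar\zeta_k=\sum_{i=1}^q\zeta_i'\otimes\bar\zeta_i'$ are two representations of $A$ as a sum of $q$ positive rank-one operators (again padding $\zeta_k=0$ for $k>r$), so Proposition~\ref{roProp1} furnishes a unitary $\mu=(\mu_{ij})\in U(q)$ with $\zeta_i'=\sum_{j=1}^q\mu_{ij}\zeta_j=\bigl((\sum_{j=1}^r\mu_{ij}v_j)\otimes\mathbf 1_H\bigr)\xi_\omega$. The uniqueness clause of Proposition~\ref{oeProp1}, i.e.\ cyclicity of $\xi_\omega$, then identifies $w_i=\sum_{j=1}^r\mu_{ij}v_j$, and $\rank w_i\leq1$ is precisely (\ref{psEq3}).

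The only genuinely delicate point is arranging the two applications of Proposition~\ref{roProp1} so that the matrix one obtains is a single $q\times q$ unitary: this is exactly what the crude bound $q=m^2n^2$ of Lemma~\ref{psLem1} buys us, and the standing hypothesis $m\geq n$ (hence $q\geq r$) is what makes the padding legitimate. The remaining verifications --- that padding preserves membership in $V^q(H,K)$ and leaves $A$ unchanged, and that $\ran(\sum_i\zeta_i'\otimes\bar\zeta_i')=\spn\{\zeta_i'\}$ --- are routine.
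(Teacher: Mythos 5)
Your argument is correct and follows essentially the same route as the paper: pad the $r$-tuple to a $q$-tuple with zeros, invoke Lemma~\ref{psLem1} and Proposition~\ref{roProp1} to produce the $q\times q$ unitary relating the two rank-one decompositions of the density operator, and then use the uniqueness clause of Proposition~\ref{oeProp1} together with Corollary~\ref{oeCor1} to translate ``product vector'' into ``rank $\leq 1$.'' The only difference is that you write out the ``if'' direction explicitly, which the paper dismisses as ``reversible'' and leaves to the reader; your version of that step is also correct.
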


\begin{proof}
Assume first that $\rho$ is separable.  By Lemma \ref{psLem1}, there are vectors 
$\xi_i\in K$, $\eta_i\in H$, $1\leq i\leq q$, such that 
\begin{equation*}
\rho(x)=\sum_{i=1}^q \langle x(\xi_i\otimes \eta_i),\xi_i\otimes \eta_i\rangle, \qquad x\in\mathcal B(K\otimes H).  
\end{equation*}
Let $v_i^\prime=v_i$ if $1\leq i\leq r$, set $v_i^\prime=0$ for $r<i\leq q$ and choose 
a vector $\xi_\omega\in H\otimes H$ that represents $\omega(b)=\langle(\mathbf 1\otimes b)\xi_\omega,\xi_\omega\rangle$ 
as in Lemma \ref{oeLem1}.  Then the formula 
$\rho=\rho_v$ can be rewritten 
$$
\rho(x)=\sum_{i=1}^q\langle x(v_i^\prime\otimes \mathbf 1)\xi_\omega,(v_i^\prime\otimes \mathbf 1)\xi_\omega\rangle, 
\qquad x\in\mathcal B(K\otimes H).  
$$
By Proposition  \ref{roProp1}, there is a unitary $q\times q$ matrix $\lambda=(\lambda_{ij})$ such that 
\begin{equation}\label{psEq4}
\xi_i\otimes\eta_i=\sum_{j=1}^q \lambda_{ij}(v_j^\prime\otimes\mathbf 1)\xi_\omega=
(\sum_{j=1}^r\lambda_{ij}v_j\otimes\mathbf 1)\xi_\omega,
\quad i=1,\dots,q.  
\end{equation}
Proposition \ref{oeProp1} implies that for every $i=1,\dots,q$ there is a unique operator 
$w_i:H\to K$ such that $(w_i\otimes\mathbf 1)\xi_\omega=\xi_i\otimes \eta_i$, and  
(\ref{psEq4}) plus uniqueness implies 
\begin{equation*}
w_i=\sum_{j=1}^r \lambda_{ij}v_j, \qquad i=1,2,\dots,q.  
\end{equation*}
Finally, Corollary \ref{oeCor1} implies that $w_i$ is of rank at most $1$, 
and (\ref{psEq3}) follows.  

All of these steps are reversible, and we leave the proof of the converse 
assertion for the reader.  
\end{proof}

We can now identify the subsets of $X^r$ that correspond to separable or 
entangled extensions of faithful states of $\mathcal B(H)$.   

\begin{prop}\label{psProp3}
For every $r=1,2,\dots, mn$, 
let $\Sep(V^r(H,K))$ be the subset of $V^r(H,K)$ defined by the conditions of (\ref{psEq3})
$$
\Sep(V^r(H,K))=\{v: \exists\ \mu\in U(q)\  {\rm{s.\ t.\ }} \rank(\sum_{j=1}^r\mu_{ij}v_j)\leq 1, \quad  1\leq i\leq q\}.
$$  
The natural projection $v\mapsto \dot v$ of $V^r(H,K)$ on $X^r$ carries  
$\Sep(V^r(H,K))$ onto a closed subset 
$\Sep(X^r)$ of $X^r$ that is invariant under the right action of $\mathcal U(H)$, 
and which has the following properties:  For every 
faithful state $\omega$ of $\mathcal B(H)$ and every $v\in V^r(H,K)$
\begin{enumerate}
\item[(i)]
$\rho_v$ is a separable state of $E^r(\omega)$  iff  $\dot v\in \Sep(X^r)$.  
\item[(ii)] $\rho_v$ is an entangled state of $E^r(\omega)$ iff  $\dot v\in X^r\setminus \Sep(X^r)$. 
\end{enumerate}
\end{prop}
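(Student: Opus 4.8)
The plan is to reduce everything to Proposition~\ref{psProp2}, which carries the analytic content; what remains is equivariance bookkeeping together with a compactness argument. The first observation is that $\Sep(V^r(H,K))$ is invariant under the left $U(r)$-action~(\ref{upsEq1}). Indeed, if $v\in\Sep(V^r(H,K))$ with witness $\mu\in U(q)$ and $\lambda=(\lambda_{ij})\in U(r)$, embed $\lambda$ into $U(q)$ as $\tilde\lambda=\lambda\oplus\mathbf 1_{q-r}$ and set $\mu'=\mu\tilde\lambda^{-1}\in U(q)$; since $\tilde\lambda_{jl}=\lambda_{jl}$ for $j,l\le r$, a one-line computation yields $\sum_{j=1}^r\mu'_{ij}(\lambda\cdot v)_j=\sum_{l=1}^r\mu_{il}v_l$ for each $i$, so the rank bounds~(\ref{psEq3}) pass from $(v,\mu)$ to $(\lambda\cdot v,\mu')$ and hence $\lambda\cdot v\in\Sep(V^r(H,K))$. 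As $U(r)$ is a group, $\Sep(V^r(H,K))$ is a union of $U(r)$-orbits; hence, writing $\pi\colon V^r(H,K)\to X^r$ for the quotient map, $\pi^{-1}(\Sep(X^r))=\Sep(V^r(H,K))$, i.e.\ $v\in\Sep(V^r(H,K))\iff\dot v\in\Sep(X^r)$.

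Next I would dispatch closedness and the right-action invariance. Present $\Sep(V^r(H,K))$ as the image under the coordinate projection $V^r(H,K)\times U(q)\to V^r(H,K)$ of
\[
S=\{(v,\mu)\in V^r(H,K)\times U(q):\ \rank\bigl(\textstyle\sum_{j=1}^{r}\mu_{ij}v_j\bigr)\le 1,\ 1\le i\le q\}.
\]
The map $(v,\mu)\mapsto\sum_{j}\mu_{ij}v_j$ is polynomial and $\{\rank\le 1\}$ is the common zero set of the $2\times 2$ minors, so $S$ is closed in the compact space $V^r(H,K)\times U(q)$, hence compact; thus $\Sep(V^r(H,K))$ is compact, and so is its image $\Sep(X^r)$ under the continuous map $\pi$, which is therefore closed. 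For the right action, if $v\in\Sep(V^r(H,K))$ with witness $\mu$ and $w\in\mathcal U(H)$, then $\sum_{j=1}^r\mu_{ij}(v_jw)=\bigl(\sum_{j=1}^r\mu_{ij}v_j\bigr)w$ and right multiplication by the invertible operator $w$ preserves rank, so $v\cdot w\in\Sep(V^r(H,K))$ with the same witness; since the right $\mathcal U(H)$-action commutes with the left $U(r)$-action and descends to $X^r$ (Remark~\ref{upsRem1}), $\Sep(X^r)$ is invariant under it.

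Finally, fix any faithful state $\omega$ of $\mathcal B(H)$ and a vector $\xi_\omega$ as in~(\ref{ssEq1}). By Theorem~\ref{ssThm1}, every $\rho\in E^r(\omega)$ equals $\rho_v$ for some $v\in V^r(H,K)$, with $\rho_v=\rho_{v'}$ iff $\dot v=\dot{v'}$; by Proposition~\ref{psProp2}, $\rho_v$ is separable iff some $\mu\in U(q)$ satisfies~(\ref{psEq3}), i.e.\ iff $v\in\Sep(V^r(H,K))$, i.e.\ (by the first paragraph) iff $\dot v\in\Sep(X^r)$. This proves (i); since $\Sep(X^r)$ makes no reference to $\omega$, the same closed set serves every faithful $\omega$. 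Statement (ii) is then immediate, as entanglement is by definition the failure of separability and $\Sep(X^r)$, $X^r\setminus\Sep(X^r)$ partition $X^r$. The only step requiring real care is the $U(r)$-equivariance of the first paragraph — checking that the single replacement $\mu\mapsto\mu\tilde\lambda^{-1}$ converts a separability witness for $v$ into one for $\lambda\cdot v$ — but this is routine, and the substantive work is already packaged in Proposition~\ref{psProp2}.
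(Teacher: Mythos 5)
Your proof is correct, and it leans on Proposition~\ref{psProp2} for the substantive content just as the paper does; but two of your supporting steps are carried out differently. First, you verify directly, by the substitution $\mu\mapsto\mu\tilde\lambda^{-1}$ with $\tilde\lambda=\lambda\oplus\mathbf 1_{q-r}$, that $\Sep(V^r(H,K))$ is saturated under the left $U(r)$-action (the computation checks out: $\sum_{j\le r}\mu'_{ij}\lambda_{jl}=\mu_{il}$ for $l\le r$ because the lower-left block of $\tilde\lambda$ vanishes), whereas the paper leaves this implicit, since separability of $\rho_v$ is manifestly a property of $\dot v$ alone. Second, and more substantively, your closedness argument is intrinsic: you realize $\Sep(V^r(H,K))$ as the image of the compact witness set $S\subseteq V^r(H,K)\times U(q)$ cut out by vanishing $2\times2$ minors, then push forward compactness through the quotient map. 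The paper instead transports closedness through the homeomorphism $\dot v\mapsto\rho_v$ of Theorem~\ref{ssThm1} from the fact that the separable states form a closed subset of the state space, which in turn rests on Carath\'eodory's theorem (Remark~\ref{enRem1}). Your route has the mild advantage of being self-contained at the level of the parameter space and not invoking Remark~\ref{enRem1} a second time; the paper's is shorter given the machinery already in place. The right-invariance argument ($\rank(vw)=\rank(v)$ with the same witness $\mu$) and the derivation of (i) and (ii) coincide with the paper's.
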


\begin{proof}
For a fixed faithful state $\omega$ of $\mathcal B(H)$, Proposition \ref{psProp2} implies that 
the homeomorphism $\dot v\mapsto \rho_v$ maps $\Sep(X^r)$ 
onto the space of separable states in $E^r(\omega)$.  Since 
the separable states form a closed subset of the state space of $\mathcal  B(K\otimes H)$, it follows that 
$\Sep(X^r)$ is closed.  Invariance under the right action of $\mathcal U(H)$ on $X^r$ 
follows from the fact that 
for every operator $v\in \mathcal B(H,K)$ and every unitary operator $w$ on $H$, 
$\rank(vw)=\rank(v)$.  
Assertion (i) is a restatement of Proposition \ref{psProp2}, and (ii) follows from (i) since entangled 
states and separable states are complementary sets.  
\end{proof}

The following result implies that there are plenty of entangled states of all possible ranks.  
We will obtain sharper results in Sections \ref{S:er} and \ref{S:di}.  

\begin{thm}\label{psThm1}
For every $r=1,2,\dots,mn$, $\Sep(X^r)$ is a proper closed subset of $X^r$, 
and for every faithful state $\omega$ of $\mathcal B(H)$, the probability $p$ 
of entanglement in $(E^r(\omega),P^{r,\omega})$ is independent of the choice of $\omega$ 
and satisfies 
$$
p=1-P^r(\Sep(X^r))=P^r(X^r\setminus \Sep(X^r))>0.
$$  
\end{thm}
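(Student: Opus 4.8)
The plan is to transport everything to the intrinsic probability space $(X^r,P^r)$ and then establish properness by exhibiting one explicit entangled extension. First I would dispatch the formula for $p$ together with its independence of $\omega$: by Remark \ref{psRem0} the second arrow of (\ref{psEq1}) is a measure-preserving homeomorphism $(X^r,P^r)\to(E^r(\omega),P^{r,\omega})$, and by Proposition \ref{psProp3} it carries $\Sep(X^r)$ onto the separable states of $E^r(\omega)$ and $X^r\setminus\Sep(X^r)$ onto the entangled states. Hence the probability of entanglement is $p=P^r(X^r\setminus\Sep(X^r))=1-P^r(\Sep(X^r))$; the right-hand side refers only to the intrinsic data, so $p$ is independent of $\omega$, and $\Sep(X^r)$ is closed by Proposition \ref{psProp3}. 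Everything then comes down to proving $P^r(\Sep(X^r))<1$, which in particular forces $\Sep(X^r)$ to be a proper subset of $X^r$.

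Next I would reduce $P^r(\Sep(X^r))<1$ to a nonemptiness statement. Since $\Sep(X^r)$ is closed, $X^r\setminus\Sep(X^r)$ is open, so its preimage $W$ under the continuous projection $V^r(H,K)\to X^r$ is an open subset of $V^r(H,K)$, and by the definition of $P^r$ we have $P^r(X^r\setminus\Sep(X^r))=\mu(W)$. By Theorem \ref{nsThm1} the measure $\mu$ is the volume measure of the connected Riemannian manifold $V^r(H,K)$ — equivalently the invariant probability measure of a transitive compact group action — so every nonempty open subset of $V^r(H,K)$ has strictly positive $\mu$-measure (for the group-theoretic description, finitely many translates of a nonempty open set already cover $V^r(H,K)$). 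Thus it suffices to produce a single $v\in V^r(H,K)$ with $\dot v\notin\Sep(X^r)$, equivalently a single entangled state in $E^r(\omega)$; and since $E^1(\omega)\subseteq E^r(\omega)$, a pure entangled state will do.

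Finally I would write down that state. Take the unit vector $\zeta=\sqrt{\lambda_1}\,f_1\otimes e_1+\cdots+\sqrt{\lambda_n}\,f_n\otimes e_n$ of (\ref{ssEq0}), where $e_1,\dots,e_n$ is an orthonormal eigenbasis for the density operator of $\omega$ with eigenvalues $\lambda_k$, and $f_1,\dots,f_n$ is orthonormal in $K$; as noted at (\ref{ssEq0}) the pure state $\rho(x)=\langle x\zeta,\zeta\rangle$ restricts to $\omega$, so $\rho\in E^1(\omega)$. Because $\omega$ is faithful (and $n=\dim H\geq 2$) all $\lambda_k>0$, so $\zeta$ has Schmidt rank $n\geq 2$ and is not a simple tensor; hence $\rho$ is not a product state, and being an extreme point of the state space it is not a convex combination of product states either — i.e. $\rho$ is entangled. (Alternatively, invoke the range criterion recalled in the proof of Corollary \ref{oeCor1}: for a separable state the range of the density operator is spanned by product vectors, whereas here that range is the line $\mathbb C\zeta$.) Writing $\rho=\rho_v$ for suitable $v\in V^r(H,K)$ by the surjectivity half of Theorem \ref{ssThm1} and applying Proposition \ref{psProp3}(ii) gives $\dot v\in X^r\setminus\Sep(X^r)$, completing the proof. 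The argument contains no genuinely hard step, the real content having already been isolated in Proposition \ref{psProp3} and Theorem \ref{ssThm1}; the only places wanting a line of care are the full-support property of $\mu$ and the elementary claim that a pure separable state is a product state.
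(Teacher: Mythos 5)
Your proof is correct, but the way you establish properness of $\Sep(X^r)$ differs from the paper's. Both arguments agree on the framing: reduce everything to the intrinsic space via Proposition \ref{psProp3} and Remark \ref{psRem0}, and observe that $X^r\setminus\Sep(X^r)$ is open, hence of positive $P^r$-measure once it is nonempty. Where you diverge is in producing the entangled point. The paper takes the maximally entangled vector $\zeta=\tfrac{1}{\sqrt n}\sum f_k\otimes e_k$, builds an explicit entanglement witness $c=\alpha\mathbf 1-\zeta\otimes\bar\zeta$ that is nonnegative on all product states but negative on the vector state of $\zeta$, and then perturbs that pure state to $\sigma_t=\tfrac{t}{r}\tr(p\,\cdot)+(1-t)\rho$ so as to obtain an entangled state of rank exactly $r$ with faithful marginal, finally transporting back through $X^r$. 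You instead use the pure extension (\ref{ssEq0}) of the \emph{given} faithful $\omega$, note that faithfulness forces Schmidt rank $n\geq 2$, and invoke the elementary fact that a pure separable state must be a product state (or the range criterion); the surjectivity half of Theorem \ref{ssThm1} — which covers states of rank $<r$ as well — then places this rank-one state in the image of $v\mapsto\rho_v$ and Proposition \ref{psProp3}(ii) finishes. Your route is shorter, avoids the witness and the perturbation entirely, and works directly for the prescribed $\omega$; the paper's route buys the slightly stronger (but here unneeded) conclusion that an entangled state of rank exactly $r$ with faithful marginal exists. Both arguments require $n=\dim H\geq 2$, which you correctly flag and which is the standing assumption of the relevant sections (cf.\ the remark on mixed states in Section \ref{S:oe}); and your appeal to full support of $\mu$ on nonempty open sets is justified exactly as you say, by transitivity of the compact group action in Theorem \ref{nsThm1}.
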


\begin{proof}  Fix $r=1,2,\dots,mn$.  We claim first that there is a faithful 
state $\omega$ of $\mathcal B(H)$ such that $E^r(\omega)$ 
contains an entangled state.  To see that, choose an orthonormal basis $e_1,\dots,e_n$ for 
$H$, an orthonormal set $f_1,\dots,f_n\in K$, and 
let $\zeta$ be the unit vector 
$$
\zeta=\frac{1}{\sqrt n}(f_1\otimes e_1+\cdots+f_n\otimes e_n)\in K\otimes H.  
$$
It is well known that $\rho(x)=\langle x\zeta,\zeta\rangle$, $x\in\mathcal B(K\otimes H)$, defines a pure 
entangled state of $\mathcal B(K\otimes H)$ that restricts to the tracial state on $\mathcal B(H)$.  

We claim that there is a self-adjoint operator $c\in \mathcal B(K\otimes H)$ 
such that $\rho(c)<0$ and such that for all states $\sigma_1$ of $\mathcal B(K)$ 
and $\sigma_2$ of $\mathcal B(H)$, one has 
\begin{equation}\label{psEq5}
(\sigma_1\otimes\sigma_2)(c)\geq 0.  
\end{equation}
Indeed, since $\zeta$ is not a tensor product, we have $|\langle \xi\otimes \eta,\zeta\rangle|<1$ 
for every pair of unit vectors $\xi\in K$, $\eta\in H$; and    
since the unit spheres of $K$ and $H$ are compact, we can choose $\alpha\in(0,1)$ such that 
$$
\max\{|\langle \xi\otimes \eta,\zeta\rangle|^2: \xi\in K,\ \eta\in H,\ \|\xi\|=\|\eta\|=1\}\leq\alpha<1.     
$$
Set $c=\alpha\cdot\mathbf 1-\zeta\otimes\bar\zeta$.  Obviously $\rho(c)<0$, and by its construction, 
$c$ satisfies (\ref{psEq5}) for pure states $\sigma_1$ and $\sigma_2$.  (\ref{psEq5}) follows in 
general, since every 
state is a convex combination of pure states.

Now choose any projection $p$ of rank $r$ in $\mathcal B(K\otimes H)$ 
whose range contains $\zeta$.  Then for every $t\in (0,1)$, 
$$
\sigma_t(x)=\frac{t}{r}\tr(px)+(1-t)\cdot \rho(x),\qquad x\in\mathcal B(K\otimes H)
$$
is a state of rank $r$ that restricts to a faithful state $\omega_t$ of 
$\mathcal B(H)$.  Moreover, for sufficiently small $t$, we will have $\sigma_t(c)<0$; and    
for such $t$ (\ref{psEq5}) implies that $\sigma_t$ is not a convex combination 
of product states, proving the claim.    

Choose a faithful state $\omega$ of $\mathcal B(H)$ such that $E^r(\omega)$ contains 
an entangled state $\rho_0$.  Then the inverse image $x_0\in X^r$ of $\rho_0$ under the 
map $\dot v\in X^r\mapsto \rho_v\in E^r(\omega)$ is a point in the complement of $\Sep(X^r)$, hence $\Sep(X^r)\neq X^r$.  The set 
$X^r\setminus \Sep(X^r)$ is a nonempty open subset of $X^r$ which therefore has positive 
$P^r$-measure.  It follows from Proposition \ref{psProp3} 
that the probability $p$ of entanglement in $(E^r,P^{r,\omega})$ 
satisfies $p=P^r(X\setminus\Sep(X^r))>0$.  Finally, Proposition \ref{psProp3} and 
Remark \ref{psRem0} imply that the same assertions are true for the 
probability space $(E^r(\omega^\prime), P^{r,\omega^\prime})$ associated with any faithful 
state $\omega^\prime$ of $\mathcal B(H)$, and that the probability of entanglement in $(E(\omega^\prime),P^{r,\omega^\prime})$ 
does not depend on the choice of $\omega^\prime$.  
\end{proof}

\section{Properties of the wedge invariant}\label{S:wi}

Proposition \ref{psProp3} implies that among the states $\rho_v$ of $E^r(\omega)$, 
the separability property 
is determined by membership of $\dot v$ in the closed set $\Sep(X^r)$.  
Hence, in order to calculate or estimate the probability of entanglement 
in the spaces $(E^r(\omega),P^{r,\omega})$, one needs to calculate or estimate 
$P^r(\Sep(X^r))$.    Writing $q=m^2n^2$ as in the preceding section, 
the set $\Sep(X^r)$ is identified in Propositions \ref{psProp2} and \ref{psProp3} as 
\begin{equation}\label{wiEq1}
\Sep(X^r)=\bigcup_{\mu\in U(q)}\{\dot v\in X^r: \rank(\sum_{j=1}^r\mu_{ij}v_j)\leq 1,\quad 1\leq i\leq q\}.  
\end{equation}
The set on the right defines an uncountable union of subvarieties of $V^r(H,K)$, but it 
is not a subvariety itself nor even a countable union of subvarieties (see Section \ref{S:ec}).   In this section 
we reformulate the definition of the wedge invariant (Definition \ref{wsDef1}) as a pair of random variables 
$$
\dot w,\dot w^*: X^r\to \{0,1,2,\dots\}.
$$  
We show that these random variables provide a nontrivial test for separability -- i.e., membership 
in $\Sep(X^r)$ -- and  
that they define subvarieties 
$$
A=\{v\in V^r(H,K): \dot w(\dot v)\leq 1\}, \quad A^*=\{v\in V^r(H,K): \dot w^*(\dot v)\leq 1\}, 
$$ 
with the property that $\Sep(X^r)\subseteq \dot A\cap \dot A^*$.  
The latter property is critical for the applications of Section \ref{S:er}.

Fix $r=1,2,\dots,mn$ and choose $v=(v_1,\dots,v_r)\in V^r(H,K)$.  We can form the operator 
$v_1\wedge\cdots\wedge v_r\in\mathcal B(H^{\otimes r},K^{\otimes r})$ as in (\ref{inEq4}), 
and this operator maps the symmetric subspace of $H^{\otimes r}$ to the antisymmetric 
subspace of $K^{\otimes r}$.  
If $v$ and $v^\prime$ belong to the same $U(r)$-orbit, say $v^\prime=\lambda\cdot v$ with $\lambda=(\lambda_{ij})\in U(r)$, 
then by elementary multilinear algebra we have 
\begin{equation}\label{wiEq2}
v_1^\prime\wedge\cdots\wedge v_r^\prime=\det(\lambda_{ij})\cdot v_1\wedge\cdots\wedge v_r.  
\end{equation}
It follows that $v_1^\prime\wedge\cdots\wedge v_r^\prime(H_+^{\otimes r})=v_1\wedge\cdots\wedge v_r(H_+^{\otimes r})$.   
Similarly, we can form  $v_1^*\wedge\cdots\wedge v_r^*\in \mathcal B(K^{\otimes r}, H^{\otimes r})$, 
and $(v_1^*\wedge\cdots\wedge v_r^*)(K_+^{\otimes r})$ depends only on the $U(r)$ orbit of $v$.  
Thus we can define integer-valued random variables 
$\dot w, \dot w^*: X^r\to \{0,1,2,\dots\}$ by 
\begin{equation}\label{wiEq3}
\dot w(\dot v) =\rank (v_1\wedge\cdots\wedge v_r\restriction_{H_+^{\otimes r}}), \quad 
\dot w^*(\dot v) =\rank (v_1^*\wedge\cdots\wedge v_r^*\restriction_{K_+^{\otimes r}}), 
\end{equation}
for $v\in V^r(H,K)$.  
The following result implies that these random variables can detect entanglement.  
Note too that both random variables $\dot w$ and $\dot w^*$ are invariant under the 
right action of $\mathcal U(H)$ on $X^r$.  

\begin{prop}\label{wiProp1}  For every $x\in \Sep(X^r)$, we have $\dot w(x)\leq 1$ 
and $\dot w^*(x)\leq 1$.  
\end{prop}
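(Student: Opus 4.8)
The plan is to reduce Proposition \ref{wiProp1} to Theorem \ref{wsThm1} by unwinding the definitions. Fix $x \in \Sep(X^r)$ and choose $v = (v_1,\dots,v_r) \in V^r(H,K)$ with $\dot v = x$ lying in $\Sep(V^r(H,K))$; by Proposition \ref{psProp3}, for any faithful state $\omega$ of $\mathcal B(H)$ the corresponding extension $\rho_v \in E^r(\omega)$ is then a \emph{separable} state of $\mathcal B(K\otimes H)$. The idea is that the pair $(v_1,\dots,v_r)$ is, up to the equivalence of \eqref{inEq3}, exactly the $r$-tuple of operators attached to the state $\rho_v$ in the construction of the wedge invariant in Section \ref{S:in}, with the roles $K_0 = H$ (since $\omega$ is faithful, $\rank\omega = \dim H$) and $\xi = \xi_\omega$. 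Hence $\dot w(\dot v)$ and $\dot w^*(\dot v)$ as defined in \eqref{wiEq3} coincide with $w(\rho_v)$ and $w^*(\rho_v)$ of Definition \ref{wsDef1}, and Theorem \ref{wsThm1} gives $w(\rho_v) \le 1$ and $w^*(\rho_v) \le 1$ at once.

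First I would make the identification of parameters precise. Pick a faithful state $\omega$ and a vector $\xi_\omega \in H\otimes H$ satisfying \eqref{ssEq1}; since $\rank\omega = n = \dim H$, the Hilbert space $K_0$ appearing in the definition of the wedge invariant may be taken to be $H$ itself, and $\xi_\omega$ plays the role of the cyclic vector $\xi$ of \eqref{inEq2}. With $\zeta_k = (v_k \otimes \mathbf 1_H)\xi_\omega$, the state $\rho_v$ of \eqref{ssEq2} is exactly $\rho_v(x) = \sum_{k=1}^r \langle x\zeta_k,\zeta_k\rangle$, which is the representation \eqref{inEq0}; and the defining property of $v_k$ in Section \ref{S:in}, namely $(v_k\otimes x)\xi = (\mathbf 1_K \otimes x)\zeta_k$, holds here by construction of $\zeta_k$ together with $(\mathbf 1_{K_0}\otimes x)\xi_\omega$-cyclicity. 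Thus the very $r$-tuple $(v_1,\dots,v_r)$ we started with is a legitimate choice of the operators whose wedge products define $w(\rho_v), w^*(\rho_v)$; formulas \eqref{wiEq3} and Definition \ref{wsDef1} then literally agree.

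The one point that needs care --- and the only place there is anything to check --- is the hypothesis of Theorem \ref{wsThm1}, i.e. that $\rho_v$ is genuinely separable as a state of $\mathcal B(K\otimes H)$. This is precisely the content of Proposition \ref{psProp3}(i): membership of $\dot v$ in $\Sep(X^r)$ is equivalent to separability of $\rho_v$ for \emph{every} faithful $\omega$, so choosing any such $\omega$ suffices. I would also remark that one should verify that $\dot w, \dot w^*$ do not depend on which $\xi_\omega$ (equivalently, which faithful $\omega$) was used to realize the abstract construction; but this is already implicit in \eqref{wiEq2} and the well-definedness discussion preceding \eqref{wiEq3}, since any two valid choices of the $v_k$ differ by the transformation \eqref{inEq3}, which does not change the ranks in question. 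With that in hand, applying Theorem \ref{wsThm1} to $\rho_v$ yields $\dot w(x) = w(\rho_v) \le 1$ and $\dot w^*(x) = w^*(\rho_v) \le 1$, completing the proof.

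I do not expect a real obstacle here: the proposition is essentially a bookkeeping translation of the separability criterion Theorem \ref{wsThm1} (already proved earlier in this section, per the forward reference in Section \ref{S:in}) into the language of the random variables on $X^r$. The mildly delicate step is confirming that the operators $v_k$ produced by the parameterization of Section \ref{S:ss} are an admissible input to the wedge-invariant machinery of Section \ref{S:in} --- i.e. matching $K_0$ with $H$ and $\xi$ with $\xi_\omega$ --- but this is exactly what Proposition \ref{oeProp1} and Lemma \ref{oeLem1} were set up to guarantee.
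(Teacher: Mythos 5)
There is a genuine gap here, and it is structural: your argument is circular relative to the paper. Theorem \ref{wsThm1} is only \emph{stated} in the introduction, with its proof explicitly deferred to Section \ref{S:wi}; the paper's proof of Theorem \ref{wsThm1} \emph{is} Proposition \ref{wiProp1} (together with the identification $w(\rho_v)=\dot w(\dot v)$, $w^*(\rho_v)=\dot w^*(\dot v)$ of (\ref{wiEq4}), stated immediately after the proposition). So you cannot invoke Theorem \ref{wsThm1} to prove Proposition \ref{wiProp1}: the logical dependence runs the other way. Your translation of parameters ($K_0=H$, $\xi=\xi_\omega$, $\zeta_k=(v_k\otimes\mathbf 1_H)\xi_\omega$) is correct and is indeed the content of (\ref{wiEq4}), but it only shows the two statements are equivalent reformulations of one another; it does not prove either.

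What is missing is the actual mathematical content, which your proposal never touches. The paper's argument runs as follows: since $\rho_v$ is separable, Corollary \ref{oeCor1} (via the range criterion) produces a basis $w_1,\dots,w_r$ of \emph{rank-one} operators for the span of $v_1,\dots,v_r$, so that $v_1\wedge\cdots\wedge v_r=d\cdot w_1\wedge\cdots\wedge w_r$ for a nonzero scalar $d$. One then computes directly that for rank-one $w_k$ with range $\mathbb C\xi_k$, every vector $(w_1\wedge\cdots\wedge w_r)(\zeta^{\otimes r})$ lies in the one-dimensional space $\mathbb C\cdot\xi_1\wedge\cdots\wedge\xi_r$, and a polarization argument (the symmetric subspace is spanned by the vectors $\zeta^{\otimes r}$) gives $\rank(v_1\wedge\cdots\wedge v_r\restriction_{H_+^{\otimes r}})\leq 1$; the adjoint case is symmetric. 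Your proof needs this computation --- or some substitute for it --- to be a proof at all.
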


\begin{proof}  We claim that $\dot w\leq 1$ on $\Sep(X^r)$.  
Indeed, every point of $\Sep(X^r)$ has the form $x=\dot v$, where $v=(v_1,\dots,v_r)$ is an $r$-tuple 
in $V^r(H,K)$ whose associated state $\rho_v$ is separable.  We have to show that the 
restriction of the operator 
$v_1\wedge\cdots\wedge v_r$ to the symmetric subspace $H_+^{\otimes r}$ has rank $\leq 1$.  

To see that, note that Corollary \ref{oeCor1} implies 
that there is a linearly independent set of operators $w_1,\dots,w_r\in\mathcal B(H,K)$ that 
has the same linear span as $v_1,\dots,v_r$, such that $\rank w_k=1$ for every $k$.  
Since $v_1,\dots,v_r$ and $w_1,\dots,w_r$ are 
linearly independent subsets of $\mathcal B(H,K)$ 
that have the same linear span $\mathcal S$, elementary 
multilinear algebra implies that there is a complex number $d\neq 0$ such that 
$$
v_1\wedge\cdots\wedge v_r=d\cdot w_1\wedge\cdots\wedge w_r; 
$$
indeed, $d$ is the determinant of the linear operator defined on $\mathcal S$ 
by stipulating that it should carry one basis to the other.   Hence it is 
enough to show that the restriction of $w_1\wedge\cdots\wedge w_r$ to $H_+^{\otimes r}$ 
has rank at most $1$.

For  
every vector $\zeta\in H$ we have 
$$
(w_1\wedge\cdots\wedge w_r)(\zeta^{\otimes r})=w_1\zeta\wedge w_2\zeta\wedge\cdots\wedge w_r\zeta.  
$$
Now since each $w_k$ is of rank at most 
$1$, for every $k$ there are vectors $\zeta_k\in H$ and $\xi_k\in K$  
such that $w_k\zeta_k=\xi_k$ and $w_k=0$ on $\{\zeta_k\}^\perp$.  
For each $k$ we can write $\zeta=\mu_k\zeta_k+\zeta_k^\prime$
where $\mu_k\in\mathbb C$ and $\zeta_k^\prime$ belongs to the kernel of $w_k$.  Hence  
the term on the right takes the form  
$$
w_1(\mu_1\zeta_1)\wedge w_2(\mu_2\zeta_2)\wedge\cdots\wedge w_r(\mu_r\zeta_r)=
(\mu_1\mu_2\cdots\mu_r)\cdot \xi_1\wedge \xi_2\wedge\cdots\wedge \xi_r,  
$$
so that  $(w_1\wedge\cdots\wedge w_r)(\zeta^{\otimes r})
\in\mathbb C\cdot\xi_1\wedge\xi_2\wedge\cdots\wedge \xi_r$.  
Finally, a standard polarization argument 
shows that the symmetric subspace of $H^{\otimes r}$ is spanned by vectors of the form 
$\zeta^{\otimes r}$ with $\zeta\in H$, and the desired assertion 
$$
(w_1\wedge\cdots\wedge w_r)(H^{\otimes r}_{+})\subseteq \mathbb C\cdot\xi_1\wedge\xi_2\wedge\cdots\wedge\xi_r
$$ 
follows.  

The proof that 
$$
\dot w^*(\dot v)=\rank(v_1^*\wedge\cdots\wedge v_r^*\restriction_{K_+^{\otimes r}})\leq 1
$$ 
is similar, 
since the operators $w_1^*,\dots,w_r^*$ form a basis for the operator space $\mathcal S^*$ consisting of 
rank-one operators.  
\end{proof}

We have already pointed out that the analysis of states of $\mathcal B(K\otimes H)$ 
can be reduced to 
the analysis of states that restrict to faithful states on $\mathcal B(H)$.  Hence the result 
stated in Theorem \ref{wsThm1} of the introduction follows from Proposition \ref{wiProp1} 
and the fact that for every 
faithful state $\omega$ of $\mathcal B(H)$ and every state $\rho\in E^r(\omega)$ for $r=1,2,\dots,mn$, we have 
\begin{equation}\label{wiEq4}
w(\rho_v)=\dot w(\dot v), \qquad w^*(\rho_v)=\dot w^*(\dot v), \qquad v\in V^r(H,K).  
\end{equation}

Most significantly, the wedge invariant is associated with subvarieties: 

\begin{prop}\label{wiProp2}
For every $r=1,2,\dots,mn$, let 
$$
A=\{v\in V^r(H,K): \dot w(\dot v)\leq 1\},\quad A^*=\{v\in V^r(H,K): \dot w^*(\dot v)\leq 1\}.  
$$ 
Then both $A$ and $A^*$ are subvarieties of $V^r(H,K)$.  
\end{prop}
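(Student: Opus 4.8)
The plan is to realize each of the conditions $\dot w(\dot v)\le1$ and $\dot w^*(\dot v)\le1$ as the vanishing locus in $V^r(H,K)$ of a single real-homogeneous polynomial in $v$, and then to invoke Definition~\ref{nsDef1} together with Remark~\ref{nsRem1} (which records that restrictions to $V^r(H,K)$ of homogeneous polynomials on the ambient space $\mathcal B(H,K)^r$ are real-analytic functions). The only algebraic input needed is the elementary rank criterion already used in the example following Definition~\ref{nsDef1}: for a linear map $T\colon Y\to Z$ of finite-dimensional complex vector spaces, $\rank T\le1$ if and only if $(T\otimes T)\restriction_{Y\wedge Y}=0$, where $T\otimes T\colon Y\otimes Y\to Z\otimes Z$ carries the antisymmetric subspace $Y\wedge Y$ into $Z\wedge Z$; moreover the assignment $T\mapsto(T\otimes T)\restriction_{Y\wedge Y}$ is a homogeneous polynomial of degree $2$ in $T$ (it is the restriction to the diagonal of the symmetric bilinear map $(S,T)\mapsto\frac12(S\otimes T+T\otimes S)\restriction_{Y\wedge Y}$).

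For $A$ I would argue as follows. By (\ref{inEq4}) the map $v\mapsto v_1\wedge\cdots\wedge v_r$ is a homogeneous polynomial of degree $r$ on $\mathcal B(H,K)^r$ with values in $\mathcal B(H^{\otimes r},K^{\otimes r})$ (it is $r$-linear in its $r$ entries), and, as recalled at the start of this section, it carries $H_+^{\otimes r}$ into $K_-^{\otimes r}$; hence $v\mapsto T(v):=(v_1\wedge\cdots\wedge v_r)\restriction_{H_+^{\otimes r}}$ is a homogeneous polynomial of degree $r$ with values in $\mathcal B(H_+^{\otimes r},K_-^{\otimes r})$. Composing with the degree-$2$ map of the previous paragraph yields a homogeneous polynomial $F\colon\mathcal B(H,K)^r\to\mathcal B(H_+^{\otimes r}\wedge H_+^{\otimes r},\,K_-^{\otimes r}\wedge K_-^{\otimes r})$ of degree $2r$, given by $F(v)=(T(v)\otimes T(v))\restriction_{H_+^{\otimes r}\wedge H_+^{\otimes r}}$. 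By (\ref{wiEq2}) the rank of $T(v)$ depends only on the $U(r)$-orbit of $v$, which is exactly what makes $\dot w$ well defined, and $\dot w(\dot v)\le1$ holds precisely when $\rank T(v)\le1$, i.e.\ when $F(v)=0$. Therefore $A=\{v\in V^r(H,K):F(v)=0\}$ is the zero set of the real-analytic function $F\restriction_{V^r(H,K)}$, so $A$ is a subvariety by Definition~\ref{nsDef1}.

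For $A^*$ the argument runs identically with the adjoint tuple $(v_1^*,\dots,v_r^*)$ in place of $(v_1,\dots,v_r)$. The single point that requires attention — and the only place the proof is not purely formal — is that $v_k\mapsto v_k^*$ is conjugate-linear over $\mathbb C$; but it is real-linear on $\mathcal B(H,K)^r$ viewed as a finite-dimensional real vector space, which is precisely the setting of Remark~\ref{nsRem1} and of the inner product (\ref{nsEq2}). Consequently $v\mapsto v_1^*\wedge\cdots\wedge v_r^*$ is a bona fide real-homogeneous polynomial of degree $r$, with values in $\mathcal B(K^{\otimes r},H^{\otimes r})$ and carrying $K_+^{\otimes r}$ into $H_-^{\otimes r}$; setting $T^*(v)=(v_1^*\wedge\cdots\wedge v_r^*)\restriction_{K_+^{\otimes r}}$ and $F^*(v)=(T^*(v)\otimes T^*(v))\restriction_{K_+^{\otimes r}\wedge K_+^{\otimes r}}$, the map $F^*$ is a real-homogeneous polynomial of degree $2r$ and $A^*=\{v\in V^r(H,K):F^*(v)=0\}$ is a subvariety by Definition~\ref{nsDef1}. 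No serious obstacle is anticipated beyond keeping track of this real-versus-complex point and of the bookkeeping of the symmetric and antisymmetric subspaces.
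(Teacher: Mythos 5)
Your argument is correct and is essentially the proof the paper gives: both reduce the condition $\rank \le 1$ to the vanishing of the degree-$2r$ homogeneous polynomial $v\mapsto G(v)\wedge G(v)$ (your $(T(v)\otimes T(v))\restriction_{Y\wedge Y}$ is the same operator), and then invoke Definition~\ref{nsDef1}. Your explicit remark that $v\mapsto v^*$ is only real-linear, so that $F^*$ is a real-homogeneous polynomial, is a point the paper passes over with ``a similar argument,'' but it is exactly the right observation and fits the real-vector-space framework of Remark~\ref{nsRem1}.
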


\begin{proof}  The set $A$ consists of all $r$-tuples $v\in V^r(H,K)$ 
such that the operator 
$G(v)=v_1\wedge\cdots\wedge v_r\restriction_{H_+^{\otimes r}}\in \mathcal B(H_+^{\otimes r}, K_-^{\otimes r})$ 
satisfies $\rank G(v)\leq 1$, or equivalently, that $G(v)\wedge G(v)=0$, where 
$G(v)\wedge G(v)$ is now viewed as an operator 
from $H_+^{\otimes r}\wedge H_+^{\otimes r}$ to $K_-^{\otimes r}\wedge K_-^{\otimes r}$.  Hence 
$
F(v)=G(v)\wedge G(v)
$
is a homogeneous polynomial of degree $2r$ with the property 
$$
A=\{v\in V^r(H,K): F(v)=0\}, 
$$
thereby exhibiting $A$ as a subvariety.  
A similar argument with $v_k^*$ replacing $v_k$ shows that $A^*$ is a subvariety.  
\end{proof}

Propositions \ref{wiProp1} and \ref{wiProp2} provide no information as to whether the wedge invariant is nontrivial, 
but the following result does.

\begin{prop}\label{wiProp3}
Assume that $\dim K\geq \dim H\geq 2$.  Then for every integer $r$ satisfying 
$1\leq r\leq \dim H/2$ there is a point $x\in X^r$ such that $\rank x=r$ and 
$\dot w^*(x)>1$, and the following equivalent assertions are true:
\begin{enumerate}
\item[(i)] The subvariety $A^*$ of Proposition \ref{wiProp2} is proper; $A^*\neq V^r(H,K)$.
\item[(ii)] For every faithful state $\omega$ of $\mathcal B(H)$ there is 
a state of rank $r$ in $E^r(\omega)$ such that $w^*(\rho)>1$.  
\end{enumerate}
\end{prop}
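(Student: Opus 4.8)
The substance of the statement is the first clause: the existence of a single point $x=\dot v\in X^r$ with $\rank(x)=r$ and $\dot w^*(x)>1$. Once that point is in hand, (i), (ii), and their equivalence follow formally. The plan is to build such a $v=(v_1,\dots,v_r)\in V^r(H,K)$ by hand. Write $n=\dim H$, so that $n\geq 2r$ by hypothesis, fix an orthonormal basis $e_1,\dots,e_n$ of $H$ and an orthonormal family $f_1,\dots,f_n$ in $K$ (available because $\dim K\geq n$). For $k=1,\dots,r$ I define an isometry $u_k\colon H\to K$ by setting $u_ke_k=f_1$, $u_ke_{r+k}=f_2$, and letting $u_k$ carry the remaining $n-2$ basis vectors of $H$ bijectively onto $f_3,\dots,f_n$; since $u_k$ then maps an orthonormal basis to an orthonormal set, $u_k^*u_k=\mathbf 1_H$. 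Putting $v_k=r^{-1/2}u_k$ yields $v_1^*v_1+\cdots+v_r^*v_r=\mathbf 1_H$, so $v\in V^r(H,K)$.

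Two short verifications complete the construction of $x$. First, $v_1,\dots,v_r$ are linearly independent: given $\sum_kc_ku_k=0$, apply it to $e_j$ for a fixed $j\in\{1,\dots,r\}$ and pair with $f_1$; every term with $k\neq j$ drops out because $e_j\notin\{e_k,e_{r+k}\}$ forces $u_ke_j\in\{f_3,\dots,f_n\}\perp f_1$, leaving $c_j=0$. Hence $\dim\mathcal S_v=r$, i.e. $\rank(\dot v)=r$. Second, the definition of $u_k$ gives at once $u_k^*f_1=e_k$ and $u_k^*f_2=e_{r+k}$, so using the identity $(v_1^*\wedge\cdots\wedge v_r^*)(\zeta^{\otimes r})=v_1^*\zeta\wedge\cdots\wedge v_r^*\zeta$ for $\zeta\in K$ (exactly as in the proof of Proposition \ref{wiProp1}) I get
$$
(v_1^*\wedge\cdots\wedge v_r^*)(f_1^{\otimes r})=r^{-r/2}\,e_1\wedge\cdots\wedge e_r,\qquad
(v_1^*\wedge\cdots\wedge v_r^*)(f_2^{\otimes r})=r^{-r/2}\,e_{r+1}\wedge\cdots\wedge e_{2r}.
$$
Because $2r\leq n$, the right-hand sides are nonzero and, being associated to the disjoint index sets $\{1,\dots,r\}$ and $\{r+1,\dots,2r\}$, linearly independent in $\wedge^rH=H_-^{\otimes r}$; since $f_1^{\otimes r},f_2^{\otimes r}$ both lie in $K_+^{\otimes r}$, the restriction of $v_1^*\wedge\cdots\wedge v_r^*$ to $K_+^{\otimes r}$ has rank at least $2$, i.e. $\dot w^*(\dot v)\geq 2$.

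Finally I would read off the rest. Assertion (i) is immediate: $\dot w^*(\dot v)>1$ says precisely that $v\notin A^*$, so $A^*\neq V^r(H,K)$. For (ii), let $\omega$ be an arbitrary faithful state of $\mathcal B(H)$; Theorem \ref{ssThm1} makes $v\mapsto\rho_v$ a surjection onto $E^r(\omega)$, Proposition \ref{psProp1} gives $\rank\rho_v=\rank(\dot v)=r$, and (\ref{wiEq4}) gives $w^*(\rho_v)=\dot w^*(\dot v)>1$, so $\rho_v$ is a state of the required kind. As for the equivalence, (ii)$\Rightarrow$(i) since any state realizing (ii), written $\rho_v$, has $\dot w^*(\dot v)=w^*(\rho_v)>1$; and (i)$\Rightarrow$(ii) since $\dot w^*$ is a function of $\dot v$ alone, while $\dot w^*(\dot v)>1$ forces $v_1,\dots,v_r$ to be linearly independent (a linearly dependent $r$-tuple has $v_1^*\wedge\cdots\wedge v_r^*=0$) and hence $\rank(\dot v)=r$, after which $\rho_v\in E^r(\omega)$ works for every faithful $\omega$. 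I do not expect a genuine obstacle here: the construction is explicit and the estimates are elementary multilinear algebra. The only point requiring care is the index bookkeeping for the sets $\{1,\dots,r\}$ and $\{r+1,\dots,2r\}$, and it is exactly there that the hypothesis $r\leq\dim H/2$ is used — it guarantees that these two $r$-element subsets of $\{1,\dots,n\}$ are both available and distinct.
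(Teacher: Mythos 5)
Your proof is correct and follows essentially the same strategy as the paper's: both construct an explicit $r$-tuple $v\in V^r(H,K)$ and show $\dot w^*(\dot v)\geq 2$ by evaluating $v_1^*\wedge\cdots\wedge v_r^*$ on the $r$-th tensor powers of two distinct vectors to produce two linearly independent antisymmetric tensors. The only difference is cosmetic — you use scaled isometries $v_k=r^{-1/2}u_k$ where the paper uses partial isometries of small rank — and your handling of the equivalence of (i) and (ii) is a correct (and slightly more explicit) version of what the paper leaves to the reader.
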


\begin{proof}
It suffices to 
exhibit an $r$-tuple $v=(v_1,\dots,v_r)\in V^r(H,K)$ such that 
$\rank(v_1^*\wedge\cdots\wedge v_r^*\restriction_{K^{\otimes r}_+})>1$.  Since $v_1^*\wedge\cdots\wedge v_r^*\neq 0$, 
the operators $v_1^*,\dots,v_r^*$ are linearly independent, hence so are $v_1,\dots,v_r$.  Proposition 
\ref{oeProp1} will then imply that the associated 
state $\rho_v$ has rank $r$, and it will satisfy $w^*(\rho_v)>1$ because of the asserted 
properties of $v_1,\dots,v_r$.  

We exhibit such operators $v_1,\dots,v_r$ as follows.  
Write $\dim H=2r+s$ with $s\geq 0$ and choose an orthonormal basis for $H$, enumerated by  
$$
\{e_1,\dots,e_r, f_1,\dots,f_r\}, \quad{\text{or}}\quad\{e_1,\dots,e_r, f_1,\dots,f_r, g_1,\dots, g_s\},
$$ 
according as $s=0$ or $s>0$.  Let $\{e_i^\prime,f_j^\prime, g_k^\prime\}$ be a similarly 
labelled orthonormal set in $K$.  For each $k=1,\dots,r$, let $v_k$ be the unique operator 
in $\mathcal B(H,K)$ satisfying $v_k e_i=\delta_{ki}e_1^\prime$ and $v_kf_i=\delta_{ki}f_1^\prime$ for $1\leq i\leq r$ if 
$s=0$, and otherwise satisfies the additional conditions 
$v_1 g_j=g_j^\prime$ and $v_2g_j=\cdots=v_rg_j=0$ for $j=1,\dots,s$ when $s>0$.  Each 
$v_k$ is a partial isometry whose adjoint 
$v_k^*$ maps $e_i^\prime$ to $\delta_{ik}e_k$ and $f_i^\prime$ to $\delta_{ik}f_k$ for $1\leq k\leq r$.  It 
follows that $v_1^*v_1+\cdots+v_r^*v_r=\mathbf 1_H$, so that $v=(v_1,\dots,v_r)\in V^r(H,K)$.  

Now consider the operator $v_1^*\wedge\cdots\wedge v_r^*$, restricted to the symmetric subspace $K^{\otimes r}_+$ of $K^{\otimes r}$.  
We have 
$$
(v_1^*\wedge\cdots\wedge v_r^*)(e_1^\prime\otimes \cdots\otimes e_1^\prime)=
v_1^*e_1^\prime\wedge v_2^*e_1^\prime\wedge\cdots\wedge v_r^*e_1^\prime=
e_1\wedge e_2\wedge\cdots\wedge e_r, 
$$
and similarly $(v_1^*\wedge\cdots\wedge v_r^*)(f_1^\prime\otimes\cdots\otimes f_1^\prime)=f_1\wedge f_2\wedge\cdots\wedge f_r$.  
Since the vectors $e_1\wedge e_2\wedge\cdots\wedge e_r$ and $f_1\wedge f_2\wedge\cdots\wedge f_r$ are mutually 
orthogonal unit vectors in $\wedge^r H$, it follows that $\rank(v_1^*\wedge\cdots\wedge v_r^*\restriction_{K^{\otimes r}_+})\geq 2$.  
\end{proof}

\section{Entangled states of small rank}\label{S:er}

We now assemble the results of the previous section into a main result.  
Fix Hilbert spaces $H$, $K$ with $2\leq n=\dim H\leq m=\dim K<\infty$.

\begin{thm}\label{erThm1}
Let $r$ be a positive integer satisfying $1\leq r\leq n/2$, let $\omega$ be a 
faithful state of $\mathcal B(H)$, and let $(E^r(\omega), P^{r,\omega})$ be the
probability 
space of Section \ref{S:ps}.   
Then almost every state of $(E^r(\omega),P^{r,\omega})$ is entangled.  
\end{thm}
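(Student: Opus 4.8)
The plan is to assemble the separability criterion furnished by the wedge invariant together with the measure-theoretic triviality of proper subvarieties established in Section \ref{S:ns}. Since the second map in (\ref{psEq1}) implements a measure-preserving homeomorphism $(X^r,P^r)\cong(E^r(\omega),P^{r,\omega})$ (Remark \ref{psRem0}), and since by Proposition \ref{psProp3} the separable states of $E^r(\omega)$ correspond exactly to the closed set $\Sep(X^r)\subseteq X^r$, the assertion ``almost every state is entangled'' is equivalent to $P^r(\Sep(X^r))=0$. So I would reduce the theorem to that single statement at the outset.

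Next I would bring in the wedge invariant. Proposition \ref{wiProp1} gives the containment $\Sep(X^r)\subseteq\dot A^*$, where $A^*=\{v\in V^r(H,K):\dot w^*(\dot v)\le 1\}$, and Proposition \ref{wiProp2} exhibits $A^*$ as a subvariety of $V^r(H,K)$. Because $\dot w^*$ is a genuine function on the quotient $X^r$, the set $A^*$ is saturated for the $U(r)$-action, i.e.\ $A^*=\{v\in V^r(H,K):\dot v\in\dot A^*\}$; hence by the very definition of $P^r$ one has $P^r(\dot A^*)=\mu(A^*)$, where $\mu$ is the invariant probability measure of $V^r(H,K)$. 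It then suffices to prove $\mu(A^*)=0$.

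This is where the hypothesis $1\le r\le n/2$ is used, and it is the only place it is needed: by Proposition \ref{wiProp3}, under this constraint the subvariety $A^*$ is proper, $A^*\neq V^r(H,K)$ — the underlying point being that $\dim H=n\ge 2r$ leaves enough room to build an $r$-tuple $(v_1,\dots,v_r)\in V^r(H,K)$ of partial isometries whose adjoint wedge $v_1^*\wedge\cdots\wedge v_r^*$ has rank $\ge 2$ on the symmetric subspace $K_+^{\otimes r}$. Since $V^r(H,K)$ is connected, Proposition \ref{nsProp1} then yields $\mu(A^*)=0$ immediately. Combining, $P^r(\Sep(X^r))\le P^r(\dot A^*)=\mu(A^*)=0$, so almost every point of $(X^r,P^r)\cong(E^r(\omega),P^{r,\omega})$ lies outside $\Sep(X^r)$ and the corresponding state is entangled.

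All the genuinely new work having been done in the preceding sections, there is no real obstacle left at this stage; the one point to handle with care is that the description (\ref{wiEq1}) of $\Sep(X^r)$ is an \emph{uncountable} union of subvarieties and is not itself a subvariety, so one cannot argue measure zero union-by-union. The argument instead leans entirely on the containment $\Sep(X^r)\subseteq\dot A^*$ into the single proper subvariety supplied by Proposition \ref{wiProp1}, which is precisely what converts the problem into one to which Proposition \ref{nsProp1} applies.
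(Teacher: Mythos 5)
Your proposal is correct and follows essentially the same route as the paper: separability forces $\dot w^*\leq 1$ (Proposition \ref{wiProp1}), the set $A^*$ where this holds is a subvariety (Proposition \ref{wiProp2}) that is proper precisely because $r\leq n/2$ (Proposition \ref{wiProp3}), and proper subvarieties of the connected manifold $V^r(H,K)$ are $\mu$-null (Proposition \ref{nsProp1}). Your added remarks on the $U(r)$-saturation of $A^*$ and on why the uncountable union (\ref{wiEq1}) cannot be handled term by term are accurate and consistent with the paper's own discussion.
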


\begin{proof} By 
Theorem \ref{ssThm1} and Proposition \ref{wiProp1}, the set 
of separable states of $E^r(\omega)$ is a closed subset of 
$$
\{\rho_v: v\in V^r(H,K),\ w^*(\rho_v)\leq 1 \}, 
$$ 
hence it suffices to show that the set 
$A^*=\{v\in V^r(H,K): w^*(\rho_v)\leq 1\}$ has $\mu$-measure zero.  
But by Propositions \ref{wiProp2} and \ref{wiProp3}, $A^*$ is a proper subvariety 
of $V^r(H,K)$, so that 
$\mu(A^*)=0$ follows from Proposition \ref{nsProp1}.  
\end{proof}

\begin{rem}[The meaning of ``relatively small rank"]\label{erRem1}  
In somewhat more prosaic terms, Theorem \ref{erThm1} 
has the following consequence.  Let $\rho$ be an arbitrary state 
of $M_m(\mathbb C)\otimes M_n(\mathbb C)$ and let $\omega$ 
be its marginal $\omega(a)=\rho(\mathbf 1\otimes a)$, $a\in M_n(\mathbb C)$.  
Then {\em whenever the inequalities 
$2\cdot\rank\rho\leq \rank\omega\leq m$ are satisfied,  one can infer from 
Theorem \ref{erThm1} 
that $\rho$ is entangled}, or else one has made a statistically impossible choice of $\rho$ that 
cannot be reproduced. 
\end{rem}

\begin{rem}[States of very small rank]  We note that 
if $r<\sqrt n$ in the hypothesis of Theorem \ref{erThm1}, then {\em every} state 
of $E^r(\omega)$ is entangled - or equivalently, 
$\Sep X^r=\emptyset$.  To sketch the elementary proof of that fact, let $\rho$ be a 
separable state of $\mathcal B(K\otimes H)$ such that $\rank\rho=r$, with $n=\dim H\leq \dim K<\infty$, 
and let $R\subseteq K\otimes H$ be the $r$-dimensional 
range of the density operator of $\rho$.   Since 
$\rho$ is separable it has a representation 
$$
\rho=\sum_{k=1}^s p_k\cdot\omega_k
$$
in which the $p_k$ are positive numbers summing to $1$ and the $\omega_k$ are pure 
product states of $\mathcal B(K\otimes H)$.  Since each $p_k>0$, the 
vector $\xi_k\otimes\eta_k$ associated with each $\omega_k$ must belong to $R$, and we 
can view the above formula as a relation between states of $\mathcal B(R)$.  At this point, Caratheodory's 
theorem (see Remark \ref{enRem1}) implies that there is a subset 
$S\subseteq\{\xi_1\otimes \eta_1,\dots,\xi_s\otimes \eta_s\}\subseteq R$ containing at most $r^2$ 
vectors  
such that $\rho$ can be written 
$$
\rho=\sum_{k=1}^{r^2}p_k^\prime\cdot\omega_k^\prime
$$
where the $p_k^\prime$ are nonnegative numbers with sum $1$ and the $\omega_k^\prime$ are 
pure product states associated with vectors in $S$.  Assuming now 
that $\rho\in E^r(\omega)$, then $\rho$ restricts to a {\em faithful} 
state of $\mathcal B(H)$ and hence $r^2\geq n$.  It follows that {\em $E^r(\omega)$ contains no 
separable states when $r<\sqrt n$.}
I am indebted to an anonymous 
referee for pointing out the idea behind this observation.  
\end{rem}

\section{Entangled states of large rank}\label{S:di}

Let $H$, $K$ be Hilbert spaces with $n=\dim H\leq m=\dim K<\infty$.  We conclude 
with an observation showing that the behavior of Theorem \ref{erThm1} 
does not persist through states of large rank.  While the first 
sentence of Theorem \ref{diThm1} is essentially known (for example, see 
\cite{gurvBarBall},  \cite{gurvBar}), 
we sketch a proof for completeness.

\begin{thm}\label{diThm1}
The set of separable states of $\mathcal B(K\otimes H)$ of rank $mn$ contains 
a nonempty relatively open subset of the state space of $\mathcal B(K\otimes H)$.  

Moreover, for every faithful state 
$\omega$ of $\mathcal B(H)$, the set of entangled 
states of $E^{mn}(\omega)$ is a relatively open subset that is 
neither empty nor dense in $E^{mn}(\omega)$, and its 
probability $p$ satisfies $0<p<1$.    
\end{thm}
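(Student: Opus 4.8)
The plan is to treat the two sentences of the statement separately: for the first I would show that the maximally mixed state is interior to the set of separable states, and for the second I would transport that fact, by an invertible local operation, to a product state actually lying in $E^{mn}(\omega)$, and then read off the four claimed properties.

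For the first sentence, let $\mathcal H$ denote the real hyperplane of trace-one selfadjoint operators on $K\otimes H$, let $\mathcal S\subseteq\mathcal H$ be the compact convex set of separable states (compact by Remark \ref{enRem1}), and let $\tau_0$ be the state with density operator $\frac{1}{mn}\mathbf 1$. Since rank-one projections span $\mathcal B(K)$ and $\mathcal B(H)$ separately, the product pure states $\xi\xi^*\otimes\eta\eta^*$, with $\xi\in K$ and $\eta\in H$ unit vectors, span $\mathcal B(K\otimes H)$; as they all lie in $\mathcal H$, their affine hull, hence $\operatorname{aff}\mathcal S$, is all of $\mathcal H$. Moreover $\tau_0$ is the barycenter of the $\mathcal U(K)\times\mathcal U(H)$-invariant probability measure carried by the compact set of product pure states, since the push-forward of that measure to either tensor factor is a unitarily invariant state, hence tracial. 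Now a barycenter of a probability measure whose support affinely spans a given affine space lies in the relative interior of the closed convex hull of that support (a one-line separating-hyperplane argument), so $\tau_0$ is interior to $\mathcal S$ relative to $\mathcal H$. Thus $\tau_0$ has a relatively open neighborhood in the state space consisting of separable states; since $\tau_0$ has full rank $mn$ and the full-rank states form an open set, shrinking that neighborhood proves the first sentence. (This is the separable-ball phenomenon of \cite{gurvBarBall}, \cite{gurvBar}.)

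Next I would prove the second sentence. Fix a faithful state $\omega$ of $\mathcal B(H)$ with density operator $D>0$, and put $\rho_0=\tau_K\otimes\omega$, a separable state of rank $mn$ with $\rho_0\in E^{mn}(\omega)$ and density operator $\frac{1}{m}\mathbf 1_K\otimes D$. Let $\Lambda$ send a state $\rho$ with density operator $A$ to the state whose density operator is $(\mathbf 1_K\otimes D^{1/2})A(\mathbf 1_K\otimes D^{1/2})$ renormalized to trace one. Because $D$ is invertible, $\Lambda$ is a homeomorphism of the state space of $\mathcal B(K\otimes H)$ onto itself (its inverse uses $D^{-1/2}$), it preserves rank, and it carries separable states to separable states: if $A=\sum_k p_kS_k\otimes T_k$ then $(\mathbf 1\otimes D^{1/2})A(\mathbf 1\otimes D^{1/2})=\sum_k p_kS_k\otimes D^{1/2}T_kD^{1/2}$ is again a positive combination of product operators. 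One checks $\Lambda(\tau_0)=\rho_0$. Hence the $\Lambda$-image of the separable ball about $\tau_0$ is a nonempty relatively open set of separable states of rank $mn$ containing $\rho_0$, and intersecting it with $E^{mn}(\omega)$ yields a nonempty relatively open subset $\mathcal O\subseteq E^{mn}(\omega)$ all of whose states are separable.

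The remaining assertions are then bookkeeping. Since the separable states are closed in the state space (Remark \ref{enRem1}), the separable states of $E^{mn}(\omega)$ are closed in $E^{mn}(\omega)$, so the entangled ones form a relatively open set; that set is nonempty and has probability $p>0$ by Theorem \ref{psThm1} with $r=mn$. It is not dense, being disjoint from the nonempty relatively open set $\mathcal O$. Finally, by its definition in Section \ref{S:ps}, $P^{mn,\omega}$ is the push-forward of the invariant measure $\mu$ of $V^{mn}(H,K)$ along the continuous surjection $v\mapsto\rho_v$ of Theorem \ref{ssThm1}, so the preimage of $\mathcal O$ is a nonempty open subset of $V^{mn}(H,K)$ and therefore has strictly positive $\mu$-measure (Remark \ref{upsRem2}); consequently $P^{mn,\omega}(\mathcal O)>0$ and $p=1-P^{mn,\omega}(\text{separable states})\leq 1-P^{mn,\omega}(\mathcal O)<1$, giving $0<p<1$. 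The one genuinely substantive step is the transfer from the separable ball about the maximally mixed state $\tau_0$ to one about the skewed product state $\tau_K\otimes\omega$ that actually lies in $E^{mn}(\omega)$; the invertible local filter $\Lambda$ is precisely what accomplishes this, and everything else is convexity bookkeeping together with the parameterization of Sections~\ref{S:ss}--\ref{S:ps}.
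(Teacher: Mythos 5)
Your proof is correct, but it reaches the conclusion by a route that differs from the paper's at both stages, and the comparison is instructive. For the first sentence, the paper does not locate the separable ball at any particular state: it argues that the faithful separable states of rank $mn$ linearly span the self-adjoint part of the dual of $\mathcal B(K\otimes H)$ (by perturbing the decomposable vector states in a separable decomposition), extracts a basis consisting of such states, and observes that the open simplex spanned by a basis of states is a nonempty relatively open set of separable rank-$mn$ states. You instead identify the concrete center $\tau_0=\frac{1}{mn}\mathbf 1$ and show it is a relative interior point of the separable body via the barycenter of the $\mathcal U(K)\times\mathcal U(H)$-invariant measure on pure product states together with a supporting-hyperplane argument; this is the classical separable-ball statement of \cite{gurvBarBall}, \cite{gurvBar} in the form the paper only alludes to. (One small imprecision: knowing that both marginals of the barycenter are tracial does not by itself identify the barycenter; what you actually want is that the measure is a product measure, so the barycenter factors as $\tau_K\otimes\tau_H$, or equivalently that the barycenter is $\mathrm{Ad}(u\otimes v)$-invariant and hence $\tau_0$. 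The conclusion is right either way.) For the second sentence, the paper transfers the resulting positive measure of $\Sep(X^{mn})$ to an arbitrary faithful $\omega$ through Theorem \ref{psThm1} and the $\omega$-independence of the probability spaces (Remark \ref{psRem0}), whereas you build, for the given $\omega$, the explicit invertible local operation $\Lambda$ carrying the ball at $\tau_0$ to a separable relatively open neighborhood of $\tau_K\otimes\omega$ inside $E^{mn}(\omega)$. Your version buys an explicit open set of separable states in each $E^{mn}(\omega)$ and makes the non-density of the entangled states visible without routing through the isomorphism $(E^{mn}(\omega),P^{mn,\omega})\cong(X^{mn},P^{mn})$; the paper's version is shorter given that the isomorphism machinery has already been established. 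Both still lean on Theorem \ref{psThm1} for $p>0$, and your final step showing $P^{mn,\omega}(\mathcal O)>0$ via the open preimage in $V^{mn}(H,K)$ and Remark \ref{upsRem2} is exactly the right way to convert the topological statement into the measure-theoretic one.
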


\begin{proof}  Note first that the set of {\em faithful} separable states 
must linearly span the self adjoint part
$S$  of 
the dual of $\mathcal B(K\otimes H)$; equivalently, for every nonzero self adjoint operator 
$x$, there is a faithful separable state $\omega$ such that $\omega(x)\neq 0$.  
Indeed, fixing $x$, we use the fact that the separable states obviously span $S$ 
to find a separable state 
$\omega$ for which $\omega(x)\neq 0$, and then we can make small changes in 
the decomposable vector states that sum to $\omega$ 
so as to find a faithful separable state $\omega^\prime$ 
close enough to $\omega$ that $\omega^\prime(x)\neq 0$.  
Since the separable states of rank $mn$ span $S$, 
we can find a basis for $S$ consisting of separable states of rank $mn$.  

Finally, since the 
convex hull of a basis for $S$ consisting of states 
must contain a nontrivial open subset of the state 
space of $\mathcal B(K\otimes H)$, 
it follows that $\Sep(X^{mn})$ has nonempty 
interior and therefore has positive $P^{mn}$-measure.   Theorem 
\ref{psThm1} implies $0<P^{mn}(\Sep(X^{mn}))<1$, and the remaining assertions of 
Theorem \ref{diThm1} follow.  
\end{proof}

\section{Constructibility, Entanglement, and Zero-One laws}\label{S:ec}

In this section we digress in order 
to make some observations about set-theoretic issues that seem to add perspective to the results 
of Sections \ref{S:er} and \ref{S:di}, and which address the broader question of whether 
entanglement can be detected by way of a more detailed analysis of real-analytic varieties.  

Let $H$, $K$ be Hilbert spaces with $n=\dim H\leq m=\dim K<\infty$ and 
fix $r=1,2,\dots, mn$.  The subvarieties of $V^r(H,K)$ (see  Definition \ref{nsDef1})
generate a $\sigma$-algebra $\mathcal A$ of subsets of $V^r(H,K)$.  This $\sigma$-algebra 
consists of Borel sets and it separates points of $V^r(H,K)$.    In the context of descriptive set theory, 
$\mathcal A$ consists of all Borel sets that can be constructed by way of a 
transfinite hierarchy 
of operations consisting of countable unions and complementations, 
starting with subvarieties.  Let $\mathcal B$ be the somewhat larger $\sigma$-algebra consisting 
of all Borel sets $E\subseteq V^r(H,K)$ which agree almost surely with sets of $\mathcal A$ in that 
there are sets $A_1,A_2\in \mathcal A$ such 
that $A_1\subseteq E\subseteq A_2$ and $\mu(A_2\setminus A_1)=0$, $\mu$ being the natural probability 
measure on $V^r(H,K)$.  

Significantly, the ``constructible" sets in $\mathcal A$ and $\mathcal B$ 
satisfy a zero-one law.  

\begin{prop}\label{ecProp1}
For every $E\in\mathcal B$, $\mu(E)=0$ or $1$.  
\end{prop}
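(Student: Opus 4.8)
The plan is to avoid any transfinite bookkeeping along the constructibility hierarchy and instead exhibit a single $\sigma$-algebra that manifestly obeys the zero-one dichotomy and contains every subvariety; since $\mathcal A$ is by definition the smallest $\sigma$-algebra containing the subvarieties, it will be swallowed by this one, and the passage from $\mathcal A$ to $\mathcal B$ will be a routine squeezing argument. Concretely, I would let $\mathcal C$ be the collection of all Borel subsets $E\subseteq V^r(H,K)$ with $\mu(E)\in\{0,1\}$.

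First I would verify that $\mathcal C$ is a $\sigma$-algebra. It contains $V^r(H,K)$, since $\mu$ is a probability measure; it is closed under complementation because $\mu(E^c)=1-\mu(E)$; and it is closed under countable unions: if $E=\bigcup_n E_n$ with each $\mu(E_n)\in\{0,1\}$, then either every $\mu(E_n)=0$, in which case $\mu(E)\le\sum_n\mu(E_n)=0$, or some $\mu(E_{n_0})=1$, in which case $\mu(E)\ge\mu(E_{n_0})=1$. Next I would observe that every subvariety of $V^r(H,K)$ lies in $\mathcal C$: a subvariety is either all of $V^r(H,K)$, hence of measure $1$, or a proper subvariety, hence of $\mu$-measure $0$ by Proposition \ref{nsProp1}. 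Since $\mathcal A$ is the $\sigma$-algebra generated by the subvarieties, it follows at once that $\mathcal A\subseteq\mathcal C$; that is, $\mu(E)\in\{0,1\}$ for every $E\in\mathcal A$.

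Finally, for $E\in\mathcal B$ I would pick $A_1,A_2\in\mathcal A$ with $A_1\subseteq E\subseteq A_2$ and $\mu(A_2\setminus A_1)=0$, and then read off $\mu(A_1)\le\mu(E)\le\mu(A_2)=\mu(A_1)+\mu(A_2\setminus A_1)=\mu(A_1)$, so $\mu(E)=\mu(A_1)\in\{0,1\}$ by the previous step. I do not expect a genuine obstacle here; the only point that warrants a moment's care is recognizing that the whole argument needs nothing beyond the fact that $\mathcal A$ is \emph{generated} as a $\sigma$-algebra by the subvarieties, together with the single substantive input Proposition \ref{nsProp1} (a proper subvariety is $\mu$-null), which in turn rests on the connectedness of $V^r(H,K)$ and the impossibility of a nontrivial real-analytic function vanishing on a set of positive Lebesgue measure on a connected real-analytic manifold.
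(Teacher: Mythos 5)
Your proof is correct and follows essentially the same strategy as the paper: the only substantive input is Proposition \ref{nsProp1}, and the rest is the observation that $\mathcal A$, being generated by the subvarieties, is contained in any $\sigma$-algebra containing them on which $\mu$ is $\{0,1\}$-valued, followed by the same squeezing argument for $\mathcal B$. The one (harmless) difference is your choice of auxiliary $\sigma$-algebra --- you take all Borel sets of measure $0$ or $1$ directly, whereas the paper takes the sets $E$ such that $E$ or its complement is covered by countably many proper subvarieties; yours is slightly more economical, the paper's retains a little extra structural information about which sets in $\mathcal A$ are the null ones.
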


\begin{proof} It clearly suffices to show that $\mu\restriction_\mathcal A$ is 
$\{0,1\}$-valued.  To prove that, let $\mathcal Z$ be the family of all {\em proper} 
subvarieties $Z\neq V^r(H,K)$.  By Proposition \ref{nsProp1}, every set in $\mathcal Z$ has 
measure zero.  Consider the family $\mathcal C$ of all Borel subsets $E\subseteq V^r(H,K)$ 
with the property that either $E$ or its complement is contained in some countable union  
$Z_1\cup Z_2\cup\cdots$ of sets $Z_k\in \mathcal Z$.  One checks easily that $\mathcal C$ is closed 
under countable unions, complementation, and it contains $\mathcal Z$.  Hence $\mathcal C$ 
is a $\sigma$-algebra containing $\mathcal A$.  
But for every set $E\in \mathcal C$ we have $\mu(E)=0$ if $E$ is contained in a countable union 
of sets from $\mathcal Z$, or $\mu(E)=1$ if the complement of $E$ is contained in a countable 
union of sets from $\mathcal Z$.  Hence $\mu(E)=0$ or $1$.  In particular,  $\mu\restriction_\mathcal A$ 
is $\{0,1\}$-valued.  
\end{proof}

Now fix  
a faithful state $\omega$ of $\mathcal B(H)$, fix $r=1,2,\dots,mn$, 
and consider the space of all separable states 
in $E^{r}(\omega)$. The inverse image of this space 
under the parameterizing map $v\in V^r(H,K)\mapsto \rho_v\in E^r(\omega)$, namely  
$$
\Sep(V^r(H,K))=\{v\in V^r(H,K): \rho_v {\rm{\ is\ separable }}\}, 
$$ 
is a compact subspace of 
$V^r(H,K)$.  Proposition \ref{psProp3} shows that its 
structure determines the properties of separable states in $E^r(\omega)$, 
and its complement determines the properties of entangled states in $E^r(\omega)$.  

\begin{rem}[Structure of $\Sep(V^r(H,K))$ for small $r$]
The key fact in the proof of Theorem \ref{erThm1} is that for relatively small values 
of $r$, $\Sep(V^r(H,K))$ is contained in a proper subvariety $A^*$.  
It follows that {\em $\Sep(V^r(H,K))$ belongs to the $\sigma$-algebra $\mathcal B$ 
when $r$ satisfies $1\leq r\leq n/2$.  }
\end{rem}

\begin{rem}[Structure of $\Sep(V^r(H,K))$ for large $r$]
On the other hand, for large values of $r$ the set $\Sep(V^r(H,K))$ 
has different properties.    
Indeed, Theorem \ref{diThm1} asserts that the probability of $\Sep(V^{mn}(H,K))$ is 
neither $0$ nor $1$, so that  Proposition \ref{ecProp1} implies that $\Sep(V^{mn}(H,K))$ cannot belong to the 
$\sigma$-algebra $\mathcal A$ of 
``real-analytically constructible" sets, 
nor even to its somewhat larger relative $\mathcal B$.  Perhaps this set-theoretic phenomenon helps to 
explain the computational difficulties that arise from attempts to 
decide whether a concretely presented state of a tensor product of matrix algebras is entangled.  
\end{rem}

Finally, note that for any $r$,  (\ref{wiEq1}) implies that 
$\Sep(V^r(H,K))$ can be expressed 
as an uncountable union of proper subvarieties $\cup\{Z_\lambda: \lambda\in U(q)\}$ 
parametrized by the group $U(q)$, $q=m^2n^2$. 
But since the union is uncountable, 
that fact provides no information about whether $\Sep(V^r(H,K))$ belongs 
to the constructible $\sigma$-algebra 
$\mathcal A$.

\section{Concluding remarks}\label{S:cr}

\begin{rem}[States versus completely positive maps]\label{crRem1}
While we have focused on states of matrix algebras and their 
extensions in this paper, all of the above results have 
equivalent formulations as statements about completely positive maps.  In more concrete terms, 
note that with every $r$-tuple $v=(v_1,\dots,v_r)\in V^r(H,K)$ one can associate a 
unit-preserving completely positive (UCP) map $\phi_v: \mathcal B(K)\to \mathcal B(H)$ by way of 
$$
\phi_v(a)=\sum_{k=1}^r v_k^*av_k,\qquad a\in\mathcal B(K),   
$$
and there is a simple notion of {\em rank} in the category of completely positive maps in 
which $\phi_v$ has rank $\leq r$ (see \cite{arvMono}, Remark 9.1.3).  Indeed, this map 
promotes to a homeomorphism $\dot v\in X^r\mapsto \phi_v$ of $X^r$ onto the space 
of UCP maps of rank $\leq r$.   This parameterization $v\mapsto \phi_v$ of UCP maps of rank $\leq r$ 
corresponds to the parameterization $v\mapsto \rho_v\in E^r(\omega)$ of (\ref{ssEq2}) via    
\begin{equation}\label{crEq1}
\rho_v(a\otimes b)=\langle (\phi_v(a)\otimes b)\xi_\omega,\xi_\omega\rangle, 
\qquad a\in\mathcal B(K), \quad b\in\mathcal B(H).  
\end{equation}
Indeed, the bijective correspondence (\ref{crEq1}) between states and UCP maps exists independently of the issues 
taken up in this paper, 
and it is useful.  

For example, the connection  
between states of $A\otimes M_n$ (where $A$ is a unital \cstar) 
and completely positive maps 
of $A$ into $M_n$ was first exploited 
in the proof of the extension theorem for completely positive 
maps (see Lemma 1.2.6 of \cite{arvSubalgI}).  Shortly after 
\cite{arvSubalgI} appeared, this connection 
was made more explicit and 
further exploited by the author and George Elliott (independently, and in 
both cases unpublished), 
so as to reduce the 
extension theorem for operator valued completely positive maps 
(Theorem 1.2.3 of \cite{arvSubalgI}) to Krein's extension 
theorem for positive linear functionals.  In the intervening 40 years, the connection 
has been rediscovered more than once, and has 
found its way into the lore of completely positive maps and quantum information 
theory (see \cite{rudQs} and references therein).   
\end{rem}

\begin{rem}[Quantum channels]
A {\em quantum channel} is a completely positive map $\psi:M^\prime\to N^\prime$ 
between the {\em duals} of matrix algebras $M$ and $N$ that carries states to states.  
Quantum channels are the adjoints of UCP maps.  Indeed,  the most general 
quantum channel $\psi$ as above has the form $\psi(\rho)=\rho\circ\phi$, $\rho\in M^\prime$, 
where $\phi: N\to M$ is a UCP map.  In 
particular, {\em quantum channels of rank $\leq r$ 
are parameterized by the same real-analytic 
noncommutative sphere that serves to parameterize UCP maps of rank $\leq r$.  }
\end{rem}

\begin{rem}[Better estimates of the critical rank]\label{crRem2}
Fix Hilbert spaces $H$, $K$ of dimensions $n\leq m$ respectively, and 
let $\nu(n,m)$ be the largest integer  
such that the probability of entanglement in $(X^r,P^r)$ is 
$1$ for every $r=1, 2, \dots,\nu(n,m)$.  Together, Theorems \ref{erThm1} and \ref{diThm1} make the 
assertion 
$$
n/2\leq \nu(n,m)<nm.  
$$  
Our feeling is that each of these two bounds is far from best possible, and 
the problem of improving these bounds deserves further study.  
\end{rem}

\begin{rem}[Bitraces]
By a {\em bitrace} we mean a state $\rho$ of $\mathcal B(H\otimes H)$ such that 
$\rho(a\otimes\mathbf 1)=\rho(\mathbf 1\otimes a)=\tau(a)$, $a\in\mathcal B(H)$, 
$\tau$ being the tracial state of $\mathcal B(H)$.   
There has been recent work on identifying the extremal bitraces, 
of which we mention only \cite{parthaEx}, \cite{priSak} and, in the equivalent context of UCP maps, 
\cite{LandStr}.  
 After associating bitraces with UCP maps as in (\ref{crEq1}), one finds 
that bitraces are in one-to-one correspondence with the set of all UCP maps $\phi: \mathcal B(H)\to \mathcal B(H)$ 
that preserve the trace.  In turn, the space of 
all trace-preserving UCP maps of rank $\leq r$ corresponds to 
the subspace of $V^r(H,H)$ consisting of all $r$-tuples $v=(v_1,\dots,v_r)$ that satisfy 
$$
v_1^*v_1+\cdots+v_r^*v_r=v_1v_1^*+\cdots+v_rv_r^*=\mathbf 1_H.  
$$
The latter equations define a proper subvariety of $V^r(H,H)$ (Definition \ref{nsDef1}) 
that is neither homogeneous nor 
connected, and whose structure is considerably more complicated that that of $V^r(H,H)$ itself.  It is 
unclear to what extent the results of this paper have counterparts for bitraces.  
\end{rem}

\appendix
\section{Existence of real-analytic structures}\label{S:a1}

Theorem \ref{a1Thm2} below is essentially known; but since it is basic to our 
main result, we include a proof.  The argument we give 
makes use of the following result, which paraphrases a special case of 
Theorem 10.3.1 of \cite{dieuAnal}.  It asserts that  a real analytic map 
of $\mathbb R^n$ to $\mathbb R^m$ 
whose derivative has constant rank can be realized locally as a linear map 
$L:\mathbb R^n\to \mathbb R^m$ after a 
real-analytic distortion of both coordinate systems.  Let $U,V$ be open subsets 
of $\mathbb R^n$.  A real-analytic isomorphism of $U$ on $V$ is a bijection 
$u: U\to V$ such that 
both $u$ and $u^{-1}$ are real-analytic mappings.

\begin{thm}\label{a1Thm1}
Let $D\subseteq \mathbb R^n$ be an open set and let $f: D\to \mathbb R^m$ be 
a real-analytic mapping such that $\rank f^\prime(x)=r$ is constant for 
$x\in D$.  Then for every $a\in D$, there exist 
\begin{enumerate}
\item[(i)]
a real-analytic isomorphism 
$u$ of the open unit ball of $\mathbb R^n$ onto an open set $U\subseteq \mathbb R^n$ satisfying 
$a\in U\subseteq D$,  
\item[(ii)]
a real-analytic isomorphism $v$ of the open unit 
ball of $\mathbb R^m$ onto an open set $V\subseteq \mathbb R^m$ satisfying 
$f(U)\subseteq V$,  
\end{enumerate}
such that $f\restriction_U$ admits a factorization $f=v\circ L\circ u^{-1}$, where $L:\mathbb R^n\to\mathbb R^m$ 
is the linear map $L(x_1,\dots,x_n)=(x_1,\dots,x_r,0, \cdots,0)$.  
\end{thm}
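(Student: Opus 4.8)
The plan is to derive this from the real-analytic inverse function theorem together with two elementary observations: composing $f$ with a real-analytic isomorphism on either side does not change the rank of its derivative (since the chain rule multiplies $f'$ by an invertible matrix), and a real-analytic map all of whose partial derivatives in a block of variables vanish identically on a connected domain is independent of those variables. First I would reduce to the normalized case $a=0$, $f(0)=0$, $f'(0)=L$. Indeed $f'(0)$ is an $m\times n$ real matrix of rank $r$, so there are invertible linear maps $P$ on $\mathbb R^m$ and $Q$ on $\mathbb R^n$ with $P\,f'(0)\,Q$ equal to the matrix of $L$; replacing $f$ by $x\mapsto P\big(f(Qx+a)-f(a)\big)$, which is a change of coordinates by real-analytic (even affine) isomorphisms, puts us in this situation while preserving the constant-rank hypothesis, and the factorization obtained for the new map transports back to one for $f$ by absorbing $P$, $Q$ into the final charts.

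Next I would straighten out the source. Split coordinates as $\mathbb R^n=\mathbb R^r\times\mathbb R^{n-r}$, writing $x=(x',x'')$, and $\mathbb R^m=\mathbb R^r\times\mathbb R^{m-r}$, and write $f=(g,h)$ with $g$ the first $r$ components. From $f'(0)=L$ one reads off that $\partial g/\partial x'$ is invertible at $0$, so the map $\phi(x',x'')=(g(x',x''),x'')$ has invertible derivative at $0$; by the real-analytic inverse function theorem $\phi$ is a real-analytic isomorphism of a neighborhood of $0$ onto a neighborhood of $0$. Taking $u=\phi^{-1}$ as the source chart, $f\circ u$ takes the form $(s',s'')\mapsto(s',\tilde h(s',s''))$ with $\tilde h$ real-analytic. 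Its derivative is block lower triangular with diagonal blocks $\mathbf 1_r$ and $\partial\tilde h/\partial s''$, so it has rank $r+\rank(\partial\tilde h/\partial s'')$; since $f\circ u$ still has constant rank $r$, we conclude $\partial\tilde h/\partial s''\equiv 0$ near $0$, whence $\tilde h(s',s'')=\bar h(s')$ for a real-analytic $\bar h\colon\mathbb R^r\to\mathbb R^{m-r}$.

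Then I would straighten the target: the map $\psi(y',y'')=(y',y''-\bar h(y'))$ is a real-analytic isomorphism (inverse $(z',z'')\mapsto(z',z''+\bar h(z'))$), and $\psi\circ f\circ u$ sends $(s',s'')$ to $(s',0)=L(s',s'')$, so $f=\psi^{-1}\circ L\circ u^{-1}$ on a neighborhood of $0$. The remaining bookkeeping is cosmetic: by shrinking neighborhoods and pre/post-composing with affine contractions, which are real-analytic isomorphisms of balls onto balls, one arranges that the source chart $u$ is defined on the open unit ball of $\mathbb R^n$ and the target chart $v$ (built from $\psi^{-1}$ and the linear normalization) on the open unit ball of $\mathbb R^m$, exactly as stated. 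The only non-elementary ingredient is the real-analytic inverse function theorem — that the local inverse of a real-analytic map with invertible derivative is again real-analytic — which is standard and is itself part of the Dieudonn\'e material being paraphrased; beyond that, the only step demanding care is correctly tracking the constant-rank condition through the two changes of coordinates so that the vanishing of $\partial\tilde h/\partial s''$ is justified on a set where rank is genuinely constant.
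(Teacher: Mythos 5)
The paper gives no proof of this statement: it is quoted as a paraphrase of a special case of Theorem 10.3.1 of Dieudonn\'e, so there is no in-paper argument to compare against. Your proposal is a correct, self-contained proof of that rank theorem by the standard route (normalize $f'(a)$ to $L$ by linear changes of coordinates, straighten the source via the real-analytic inverse function theorem applied to $\phi(x',x'')=(g(x',x''),x'')$, use the constant-rank hypothesis to conclude $\partial\tilde h/\partial s''\equiv 0$ and hence, on a ball, that $\tilde h$ depends only on $s'$, then shear the target), which is essentially Dieudonn\'e's own argument; the one step needing a word of care --- that vanishing of $\partial\tilde h/\partial s''$ implies independence of $s''$ only on a domain with connected slices --- is handled by your working on a small ball.
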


\begin{thm}\label{a1Thm2}
Let $H$, $K$ be finite-dimensional Hilbert spaces with $\dim H\leq \dim K$.  Then the 
space $\mathcal S$ of all isometries in $\mathcal B(H,K)$ is a connected real-analytic 
manifold, and a homogeneous space relative to a smooth transitive action of the unitary 
group $\mathcal U(K)$.  In particular, there is a unique probability measure on $\mathcal S$ 
that is invariant 
under the $\mathcal U(K)$-action.  
\end{thm}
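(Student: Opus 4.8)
The plan is to exhibit $\mathcal S$ as the zero set of a constant-rank real-analytic map and invoke Theorem \ref{a1Thm1} for the manifold structure, and to treat the group action and the invariant measure by standard compact-group arguments. Observe first that $\mathcal S \neq \emptyset$ since $\dim H \leq \dim K$, and that $\mathcal U(K)$ acts on $\mathcal B(H,K)$ by left multiplication through a bilinear — hence real-analytic — map, which carries $\mathcal S$ into itself because $(uv)^*(uv) = v^*v$ for $v \in \mathcal S$. This action is transitive: given $v, v' \in \mathcal S$, the assignment $v\xi \mapsto v'\xi$ ($\xi \in H$) is a well-defined isometry of $vH$ onto $v'H$ (well-definedness and isometry use that $v$ is an isometry), and since $(vH)^\perp$ and $(v'H)^\perp$ both have dimension $m-n$, it extends to a unitary $u \in \mathcal U(K)$ with $uv = v'$. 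In particular $\mathcal S$ is the continuous image of the connected group $\mathcal U(K)$, hence connected.

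For the real-analytic structure, regard $\mathcal B(H,K)$ as a real vector space and define the real-analytic (indeed polynomial) map $f \colon \mathcal B(H,K) \to \mathcal B(H)_{\mathrm{sa}}$ by $f(v) = v^*v - \mathbf 1_H$, so that $\mathcal S = f^{-1}(0)$. Its real derivative is $f'(v)h = v^*h + h^*v$, and for $v \in \mathcal S$ this is surjective onto $\mathcal B(H)_{\mathrm{sa}}$: given $b = b^* \in \mathcal B(H)$, the operator $h = \tfrac12 vb$ satisfies $f'(v)h = \tfrac12(v^*vb + bv^*v) = b$ because $v^*v = \mathbf 1_H$. Thus $\operatorname{rank} f'(v) = \dim_{\mathbb R}\mathcal B(H)_{\mathrm{sa}} = n^2$ at every point of $\mathcal S$; since $n^2$ is also the real dimension of the codomain, the rank is everywhere $\leq n^2$, so by lower semicontinuity of rank the set $D = \{v : \operatorname{rank} f'(v) = n^2\}$ is open and contains $\mathcal S$. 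Then $f|_D$ has constant rank $n^2$, and Theorem \ref{a1Thm1} provides, near any $a \in \mathcal S$, real-analytic coordinates in which $f$ becomes the linear projection onto the first $n^2$ of the $2mn$ real coordinates; hence $\mathcal S$ coincides locally with a real-analytic slice of dimension $2mn - n^2 = n(2m-n)$. This makes $\mathcal S$ a connected real-analytic manifold on which $\mathcal U(K)$ acts smoothly and transitively, so $\mathcal S \cong \mathcal U(K)/\mathcal U(K)_v$ is homogeneous.

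Finally, pushing the normalized Haar measure of the compact group $\mathcal U(K)$ forward along the orbit map $u \mapsto uv$ produces a $\mathcal U(K)$-invariant Borel probability measure on $\mathcal S$. For uniqueness, if $\nu$ is any such measure, then for $f \in C(\mathcal S)$ the function $y \mapsto \int_{\mathcal U(K)} f(u^{-1}y)\,du$ is constant — transitivity together with invariance of Haar measure shows it does not depend on $y$ — and integrating this identity against $\nu$, using invariance of $\nu$, shows $\int f\,d\nu$ equals that constant, which is also $\int f$ against the pushforward measure; hence $\nu$ is the pushforward. I do not expect a substantive obstacle, since this is essentially the standard description of the complex Stiefel manifold; the one point requiring genuine care is the constant-rank bookkeeping, i.e.\ verifying that $\operatorname{rank} f'$ does not drop on a neighborhood of $\mathcal S$, which is precisely what licenses the appeal to Theorem \ref{a1Thm1}.
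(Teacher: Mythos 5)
Your proof is correct and follows essentially the same route as the paper: both realize $\mathcal S$ as a level set of $v\mapsto v^*v$, verify that the derivative $h\mapsto v^*h+h^*v$ has constant rank $n^2$ on an open neighborhood of $\mathcal S$ so that Theorem \ref{a1Thm1} applies, and obtain transitivity by extending the isometry $v\xi\mapsto v'\xi$ to a unitary of $K$. The only differences are minor: the paper proves surjectivity of the derivative on the larger open set of $v$ with $v^*v$ invertible via a trace-duality argument, whereas you exhibit the explicit right inverse $h=\tfrac12 vb$ at points of $\mathcal S$ and then invoke lower semicontinuity of rank; and you write out the existence and uniqueness of the invariant measure, which the paper treats as standard.
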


\begin{proof}  
To introduce a real-analytic structure on $\mathcal S$, 
consider the mapping $f: \mathcal B(H,K)\to \mathcal B(H)$ given 
by $f(v)=v^*v$.  If we view $f$ as a real-analytic map of finite-dimensional real vector 
spaces, then the derivative of $f$ at $v\in\mathcal B(H,K)$ is the real-linear map 
$f^\prime(v): h\in\mathcal B(H,K)\mapsto v^*h+h^*v\in\mathcal B(H)$.  The range of $f^\prime(v)$ is 
contained in the real vector space $\mathcal B(H)^{\text sa}$ of self-adjoint operators on $H$.  

Let $D$ be the set of all 
$v\in\mathcal B(H,K)$ such that $v^*v$ is invertible.  Then $D$ is an open set 
containing $\mathcal S$, and we claim that $f^\prime(v)$ has range $\mathcal B(H)^{\text sa}$ 
for every $v\in D$.  Indeed, the most general real linear 
functional on $\mathcal B(H)^{\text sa}$ has the 
form $\omega(y)=\tr(\Omega y)$ for some $\Omega=\Omega^*\in\mathcal B(H)$, and we have 
to show that if $\omega$ annihilates the range of $f^\prime(v)$ for some $v\in D$ then $\omega=0$.  
Since $\Omega=\Omega^*$, 
we can replace $h$ with $\sqrt{-1}h$ in the formula 
$$
\tr(\Omega(v^*h+h^*v))=\omega(f^\prime(v)(h))=0
$$ 
to obtain $\tr(\Omega v^*h-h^*v))=0$.  After adding these two expressions we obtain 
$\tr(\Omega v^*h)=0$ 
for 
all $h\in\mathcal B(H,K)$, hence $\Omega v^*=0$ for all $v\in D$.   
It follows that $\Omega v^*v=0$ and finally $\Omega=0$ since $v^*v$ is invertible for every $v\in D$.  

Hence the rank of $f^\prime(v)$ is constant throughout $D$. Theorem \ref{a1Thm1} now implies 
that the subspace $\mathcal S=\{v\in D: f(v)=\mathbf 1_H\}$ 
of $D$ can be endowed locally with a real-analytic 
structure, and moreover, that these local structures are mutually compatible with each other.  
Hence $\mathcal S$ is a real-analytic submanifold of $\mathcal B(H,K)$.  

For the remaining statements, fix  
$u,v\in\mathcal S$.  We claim that there is a unitary operator $w\in\mathcal B(K)$ 
such that $wu=v$.  Indeed, 
since $\|u\xi\|=\|v\xi\|=\|\xi\|$ for every $\xi \in H$, 
we can define an isometry $w_0$ from the range of $u$ to the range of $v$ 
by setting 
$w_0(u\xi)=v\xi$ for all $\xi\in H$.  Since $K$ is finite-dimensional, 
$w_0$ can be extended to a unitary operator $w\in\mathcal U(K)$, and 
$w$ satisfies $wu=v$.  It follows that the natural action of $\mathcal U(K)$ 
on $\mathcal S$ is smooth and transitive.  

The preceding observation implies that $\mathcal S$ is arcwise connected.  Indeed, for any 
two isometries $u,v\in \mathcal S$, there is a unitary operator $w\in\mathcal U(K)$ 
such that $wu=v$; and since the unitary group of $K$ is arcwise connected, it follows 
that $u$ can be connected to $v$ by an arc of isometries.  
\end{proof}

\begin{rem}[Identification of the invariant measure on $\mathcal S$]\label{a1Rem1}
The $\mathcal U(K)$-invariant probability measure $\mu$ on $\mathcal S$ can be described more 
concretely as follows.  The space $\mathcal S$ is embedded in the space of all operators 
$\mathcal B(H,K)$, and we can view the latter as a real Hilbert space with inner product 
$$
\langle a,b\rangle = \Re \tr(b^*a),\qquad a,b\in \mathcal B(H,K).  
$$
The unitary group $\mathcal U(K)$ acts as isometries of this real Hilbert space 
by left multiplication  
$(u,a)\in \mathcal U(K)\times\mathcal B(H,K)\mapsto ua\in \mathcal B(H,K)$.  
In turn, since the tangent spaces of $\mathcal S$ are naturally embedded in $\mathcal B(H,K)$, 
this inner product gives rise to a Riemannian metric on $\mathcal S$, which in turn gives rise 
to a natural probability measure $\tilde \mu$ after renormalization.  Since the 
group $\mathcal U(K)$ acts as isometries relative to the Riemannian structure of 
$\mathcal S$, the measure $\tilde\mu$ must be invariant under the action of $\mathcal U(K)$, 
and hence  $\mu=\tilde\mu$. In particular, {\em $\mu$ is mutually absolutely continuous with 
Lebesgue measure in smooth local coordinate systems for $\mathcal S$.  }
\end{rem}

\section{Zeros of real-analytic functions}\label{S:a2}

While the result of this appendix is well known, we lack a convenient reference and 
include a simple proof, the idea of which shown to me by Michael Christ.

\begin{prop}\label{a2Prop1}
Let $D\subseteq\mathbb R^n$ be a connected open set and let $f:D\to \mathbb R$ 
be a real-analytic function that does not vanish identically.  Then the set of 
zeros of $f$ has Lebesgue measure zero.  
\end{prop}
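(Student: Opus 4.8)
The plan is to induct on the dimension $n$, using Fubini's theorem to strip off one coordinate at a time, so the whole argument rests on the one-dimensional case together with the real-analytic identity theorem (a real-analytic function on a connected open set that vanishes on a nonempty open subset vanishes identically). For $n=1$, $D$ is an open interval; I would first observe that $f$ cannot vanish to infinite order at any point $a\in D$, since otherwise $f$ would vanish on a neighborhood of $a$ and hence, $D$ being connected, identically on $D$. Thus at each zero $a$ there is a least $k\geq 1$ with $f^{(k)}(a)\neq 0$, so $a$ is an isolated zero; the zero set is then discrete, hence countable, hence of Lebesgue measure zero.

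For the inductive step, assume the statement in dimension $n-1$ and write $Z=f^{-1}(0)$. Since $D$ is a countable union of open boxes $B=B'\times I$ with $B'\subseteq\mathbb R^{n-1}$ an open box and $I\subseteq\mathbb R$ an open interval, it suffices to show $m_n(Z\cap B)=0$ for each such $B$, where $m_k$ denotes $k$-dimensional Lebesgue measure. Fix such a $B$. For each fixed $x'\in B'$ the function $t\mapsto f(x',t)$ is real-analytic on the interval $I$, so by the $n=1$ case its zero set in $I$ has $m_1$-measure zero unless it is all of $I$. Setting $E=\{x'\in B':f(x',\cdot)\equiv 0\text{ on }I\}$, Fubini's theorem gives
\[
m_n(Z\cap B)=\int_{B'}m_1\bigl(\{t\in I:f(x',t)=0\}\bigr)\,dx'=m_1(I)\cdot m_{n-1}(E),
\]
so the problem reduces to showing $m_{n-1}(E)=0$.

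To finish, fix $t_0\in I$; by analyticity of $f$ in the last variable, $E=\bigcap_{j\geq 0}g_j^{-1}(0)$ where $g_j(x')=\partial_t^{\,j}f(x',t_0)$ is real-analytic on the connected open set $B'$. If every $g_j$ were identically zero on $B'$, then $f$ would vanish on the open box $B$, hence identically on $D$ by the identity theorem, contrary to hypothesis; so some $g_{j_0}\not\equiv 0$ on $B'$, and the induction hypothesis yields $m_{n-1}(g_{j_0}^{-1}(0))=0$, whence $m_{n-1}(E)=0$. Summing $m_n(Z\cap B)=0$ over a countable cover of $D$ by boxes then gives $m_n(Z)=0$. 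The only point requiring care is the treatment of the slices on which $f(x',\cdot)$ collapses to the zero function; these are quarantined in the set $E$ and controlled by feeding a single non-degenerate $t$-derivative back into the inductive hypothesis, and the repeated appeal to the real-analytic identity theorem to exclude $f$ vanishing on an open box is what makes each dimensional reduction legitimate.
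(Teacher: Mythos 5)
Your proof is correct, but it takes a genuinely different route from the paper's. The paper avoids induction on dimension entirely: it argues locally around each point $a\in D$, notes that some Taylor coefficient of order $N$ at $a$ is nonzero, upgrades this to a nonzero pure $N$-th directional derivative in some direction (since a symmetric multilinear form is determined by its diagonal), and rotates coordinates so that $\partial^N f/\partial x_1^N$ is nonvanishing on a whole rectangle $U$ around $a$. The payoff of that preparation is that \emph{every} line segment in $U$ parallel to the $x_1$-axis carries a non-identically-zero one-variable analytic function, so there is no degenerate-slice set to worry about and a single application of Fubini finishes the job. Your argument instead accepts that some slices $t\mapsto f(x',t)$ may vanish identically, collects them into the set $E$, identifies $E$ as the common zero set of the real-analytic functions $g_j(x')=\partial_t^{\,j}f(x',t_0)$, and disposes of it by the inductive hypothesis applied to some $g_{j_0}\not\equiv 0$; this is the standard induction-on-dimension proof. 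What the paper's approach buys is a one-shot argument with no induction and no bad set; what yours buys is that it needs no coordinate rotation and no fact about directional derivatives versus mixed partials, only the identity theorem in each variable separately. The only points in your write-up that deserve an explicit word are the measurability facts implicitly used in the Fubini step ($Z\cap B$ is closed in $B$, and $E=\bigcap_j g_j^{-1}(0)$ is closed in $B'$, so both are measurable and Tonelli applies), but these are routine.
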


\begin{proof}
Let $Z=\{x\in D: f(x)=0\}$.  It suffices to show that for every point 
$a\in D$ there is an open set $U$ containing $a$ such that $Z\cap U$ has measure zero.  
Choose a point $a\in D$.  The power series expansion of $f$ about $a$ cannot have 
all zero coefficients, since that would imply that $f$ vanishes on an open set, 
hence identically.  Therefore some mixed partial of $f$ of order $N$ must be nonzero 
at $a$.  This implies that the $N$th derivative of $f$ in some direction must be 
nonzero at $a$.  By rotating the coordinate system of $\mathbb R^n$ about $a$, we can assume that 
$\partial ^Nf/\partial x_1^N$ is nonzero at $a$, and therefore on some open rectangle 
$U$ centered at $a$.  Let $L$ be any line of the form $x_2=c_2,\dots,x_d=c_d$ where 
$c_2,\dots,c_d$ are constants.  If $L\cap U\neq \emptyset$, then the 
restriction of $f$ to $L\cap U$ is a nonzero real-analytic function 
of the single variable $x_1$ - which has isolated zeros.  Hence the intersection of $Z$ with $L\cap U$ 
has linear Lebesgue measure zero.  By Fubini's theorem, $Z\cap U$ has measure zero.  
\end{proof}

\subsection*{Acknowledgements} I want to thank David Gale and Mike Christ for 
helpful conversations concerning aspects of this paper.  Thanks to Mary Beth Ruskai 
for providing help with references and advice on other issues.  
I also thank an anonymous 
referee for suggesting a significant shortening of the original proof of Theorem \ref{diThm1} 
as well as for other useful comments.

\bibliographystyle{alpha}

\begin{thebibliography}{PGWP{\etalchar{+}}08}

\bibitem[Arv69]{arvSubalgI}
W.~Arveson.
\newblock Subalgebras of {$C^*$}-algebras.
\newblock {\em Acta Math.}, 123:141--224, 1969.

\bibitem[Arv03]{arvMono}
W.~Arveson.
\newblock {\em Noncommutative Dynamics and ${E}$-semigroups}.
\newblock Monographs in Mathematics. Springer-Verlag, New York, 2003.

\bibitem[Arv08]{arvEnt2}
W.~Arveson.
\newblock Quantum channels that preserve entanglement.
\newblock {\em preprint}, pages 1--14, 2008.
\newblock ar{X}iv:0801.2531.

\bibitem[AS06]{aubSz}
G.~Aubrun and S.~Szarek.
\newblock tensor products of convex sets and the volume of separable states on
  {$N$} qubits.
\newblock {\em Phys. Rev. A}, 73:022109, 2006.

\bibitem[BCJ{\etalchar{+}}99]{brEtAl}
S.~Braunstein, C.~Caves, R.~Jozsa, N.~Linden, S.~Popescu, and R.~Schack.
\newblock Separability of very noisy mixed states and implications for {NMR}
  quantum computing.
\newblock {\em Phys. Rev. Lett.}, 83:1054--1057, 1999.

\bibitem[Car07]{cara1}
C.~Carath{\'e}odory.
\newblock {\"U}ber den {V}ariabilit{\"a}tsbereich der {K}oeffizienten von
  {P}otenzreihen, die gegebene {W}erte nicht annehmen.
\newblock {\em Math. Ann.}, 64:95--115, 1907.

\bibitem[Car11]{cara2}
C.~Carath{\'e}odory.
\newblock {\"U}ber den {V}ariabilit{\"a}tsbereich der {F}ourier'shen
  {K}onstanten von positiven harmonischen {F}unctionen.
\newblock {\em Rend. Circ. Mat. Palermo}, 32:193--217, 1911.

\bibitem[Die69]{dieuAnal}
J.~Dieudonn{\'e}.
\newblock {\em Foundations of modern analysis}.
\newblock Academic Press, New York, 1969.
\newblock third printing.

\bibitem[GB02]{gurvBarBall}
L.~Gurvits and H.~Barnum.
\newblock Largest separable ball around the maximally mixed bipartite quantum
  state.
\newblock {\em preprint}, pages 1--7, 2002.
\newblock ar{X}iv:quant-ph/0204159.

\bibitem[GB05]{gurvBar}
L.~Gurvits and H.~Barnum.
\newblock Better bound on the exponent of the radius of the multipartite
  separable ball.
\newblock {\em preprint}, pages 1--30, 2005.
\newblock ar{X}iv:quant-ph/0409095.

\bibitem[HHHH07]{HorSurvey}
R.~Horodecki, P.~Horodecki, M.~Horodecki, and K.~Horodecki.
\newblock Quantum entanglement.
\newblock {\em preprint}, 2007.
\newblock arXiv:quant-ph/0702225v2.

\bibitem[HLW06]{hlw}
P.~Hayden, D.~Leung, and A.~Winter.
\newblock Aspects of generic entanglement.
\newblock {\em Comm. Math. Phys.}, 265:95--117, 2006.

\bibitem[Loc00]{lockhart}
R.~Lockhart.
\newblock Optimal ensemble length of mixed separable states.
\newblock {\em J. Math. Phys.}, 41(10):6766--6771, 2000.
\newblock ar{X}iv:quant-ph/9908050.

\bibitem[LS93]{LandStr}
L.~J. Landau and R.~F. Streater.
\newblock On {B}irkhoff's theorem for doubly stochastic completely positive
  maps of matrix algebras.
\newblock {\em Lin. Alg. Appl.}, 193:107--127, 1993.

\bibitem[Par04]{parthaMaxDim}
K.~R. Parthasarathy.
\newblock On the maximal dimension of a completely entangled subspace for
  finite level quantum systems.
\newblock {\em Proc. Indian Acad. Sci. (Math. Sci.)}, 114(4):365--374, 2004.

\bibitem[Par05]{parthaEx}
K.~R. Parthasarathy.
\newblock Extremal quantum states in coupled systems.
\newblock {\em Ann. Inst. H. {Poincar\'e}}, 41:257--268, 2005.

\bibitem[Per96]{peresSep}
A.~Peres.
\newblock Separability criterion for density matrices.
\newblock {\em Phys. Rev. Lett.}, 77(8):1413--1415, 1996.

\bibitem[PGWP{\etalchar{+}}08]{pgEtAl}
D.~Perez-Garcia, M.~Wolf, C.~Palazuelos, I.~Villanueva, and M.~Junge.
\newblock Unbounded violation of tripartite {B}ell inequalities.
\newblock {\em Comm. Math. Phys.}, 279:455--486, 2008.

\bibitem[PR02]{pittRubin}
A.~Pittenger and M.~Rubin.
\newblock Convexity and the separability problem for density matrices.
\newblock {\em Lin. Alg. Appl.}, 346:47--71, 2002.

\bibitem[PS07]{priSak}
G.~Price and S.~Sakai.
\newblock Extremal marginal tracial states in coupled systems.
\newblock {\em Operators and Matrices}, 1:153--163, 2007.

\bibitem[Rud04]{rudQs}
O.~Rudolph.
\newblock On extremal quantum states of composite systems with fixed marginals.
\newblock {\em J. Math. Phys.}, 45(11):4035--4041, 2004.

\bibitem[St{\o}07]{storSep}
E.~St{\o}rmer.
\newblock Separable states and positive maps.
\newblock {\em Preprint}, pages 1--20, 2007.
\newblock ar{X}iv:0710.3071v2.

\bibitem[STV98]{sanEtAl}
A.~Sanpera, R.~Terrach, and G.~Vidal.
\newblock Local description of quantum inseparability.
\newblock {\em Phys. Rev. A}, 58(2):826--830, 1998.

\bibitem[Sza04]{szVol}
S.~Szarek.
\newblock The volume of separable states is super doubly exponentially small.
\newblock {\em Preprint}, pages 1--10, 2004.
\newblock ar{X}iv:quant-ph/0310061.

\bibitem[Uhl98]{uhl}
A.~Uhlmann.
\newblock Entropy and optimal decompositions of states relative to a maximal
  commutative subalgebra.
\newblock {\em Open Sys. Info. Dyn.}, 5(3):209--227, 1998.

\bibitem[Wer89]{wern1}
R.~F. Werner.
\newblock Quantum states with {E}instein-{P}odolsky-{Rosen} correlations
  admitting a hidden-variable model.
\newblock {\em Phys. Rev. {A}}, 40(8):4277--4281, 1989.

\bibitem[ZHLS98]{zyEtAl}
K.~Zyczkowski, P.~Horodecki, M.~Lewenstein, and A.~Sanpera.
\newblock Volume of the set of separable states.
\newblock {\em Phys. Rev. A}, 58(2):883--892, 1998.

\end{thebibliography}

\newcommand{\etalchar}[1]{$^{#1}$}
\newcommand{\noopsort}[1]{} \newcommand{\printfirst}[2]{#1}
  \newcommand{\singleletter}[1]{#1} \newcommand{\switchargs}[2]{#2#1}

\end{document}